\newcommand{\di}[1]{{\mathrm{div}}\left( #1\right)}
\newcommand{\ve}{\varepsilon}
\newcommand{\A}{\mathcal A}
\definecolor{ocre}{RGB}{243,102,25}
\crefname{hypothesis}{Hypothesis}{Hypotheses}
\title{Nonlinear multidomain model for nerve bundles with random structure\thanks{
\funding{This work was supported by Swedish Foundation for International Cooperation in Research and Higher education (STINT) project CS2018-7908, Olle Engkvist Foundation Grant 227-0235, and SSF grants for Ukrainian scientists Dnr UKR22-0004, UKR24-0023.}}}
\author{Irina Pettersson\thanks{Chalmers University of Technology and University of Gothenburg, Sweden 
  (\email{irinap@chalmers.se}
).}
\and Antonina Rybalko\thanks{Chalmers University of Technology and University of Gothenburg, Sweden; Kharkiv National University of Radio Electronics, Ukraine (\email{rybalkoa@chalmers.se}).}
\and Volodymyr Rybalko\thanks{Chalmers University of Technology and University of Gothenburg, Sweden; Institute for Low Temperature Physics and Engineering, Ukraine (\email{rybalko@chalmers.se}).}}
\begin{document}

\maketitle

\begin{abstract}
We present a derivation of a multidomain model for the electric potential in bundles of randomly distributed axons with different radii. The FitzHugh-Nagumo dynamics is assumed on the axons' membrane, and the conductivity depends nonlinearly on the electric field. Under ergodicity conditions, we study the asymptotic behavior of the potential in the bundle when the number of the axons in the bundle is sufficiently large and derive a macroscopic multidomain model describing the electrical activity of the bundle. Due to the randomness of geometry, the effective intracellular potential is not deterministic but is shown to be a stationary function with realizations that are constant on axons' cross sections. The technique combines the stochastic two-scale convergence and the method of monotone operators.
\end{abstract}

\begin{keywords}
Signals propagation in nerves, nonlinear conductivity, random media, stochastic homogenization, bidomain model.
\end{keywords}

\begin{MSCcodes}
Primary: 35B27, 78A48, 35K61, 35K57; Secondary: 35K65, 92C30.
\end{MSCcodes}

\section{Introduction}	
The axons in the nervous system are grouped into fascicles, and the number of axons and their diameters vary from one fascicle to another, as well as inside one fascicle \cite{standring2021gray}.  
The signal propagation along individual axons is traditionally modeled by nonlinear cable equations assuming the Hodgkin-Huxley dynamics \cite{hodgkin1952} or its simplifications, such as FitzHugh--Nagumo model \cite{fitzhugh1955mathematical, nagumo1962active}. In \cite{jerez2020derivation} a nonlinear cable equation for signal propagation in a single myelinated axon has been derived from a three-dimensional model.  
	
To catch nontrivial interactions between the axons in the bundle (ephaptic interactions \cite{bokil2001ephaptic, binczak2001ephaptic}), one needs to consider a full three-dimensional model of the bundle with several (and quite often a large number) axons in it. Complicated geometry and nonlinear dynamics on the membrane make it, however, computationally expensive, and so macroscopic models are used to describe the electric potential in the axon bundles. It was suggested in \cite{BAR00} that one can model both peripheral nerves, cortical neurons, and syncytial tissues with the help of bidomain models. In \cite{mandonnet2011role} it was hypothesized that the homogenization procedure for a bundle of axons leads to a bidomain model. In \cite{JEREZHANCKES2023103789} the authors have confirmed this hypothesis and derived an effective bidomain model for the nonlinear case, assuming FitzHugh dynamics on the membrane. In the latter work, the myelinated axons of the same radius are placed periodically inside the fascicle. 
 

In the above-described models, the periodicity of the axon distributions inside a fascicle or of the geometry of syncytial tissues is a crucial assumption allowing to apply the classical homogenization techniques. In reality, however, the placement of the axons is not periodic, and the diameters of individual fibers in one fascicle may vary. Since the diameters of axons affect the conduction velocity, it is important to allow for such variation. 
	
There have been several attempts to remove the periodicity assumption. 
%
For example, in \cite{richardson2011}, a bidomain model for a beating heart with a non-periodic microstructure is proposed. Namely, the authors apply formal multi-scale expansions to problems stated in deformed periodic structures. The model accounts for the nonuniform orientation of the cells and the deformation of the tissue as a result of the heartbeat. In \cite{pettersson2023bidomain}, the asymptotics of the electric potential in bundles with randomly distributed identical axons have been studied. It has been shown that the macroscopic behavior of the potential is given by an effective deterministic bidomain model. 
	
The present paper generalizes the homogenization results \cite{JEREZHANCKES2023103789} and \cite{pettersson2023bidomain} to the case of nonlinear conductivity and random geometry of the nerve cross section. Namely, we place the axons randomly in the cross section of the bundle, keeping the cross section of the axons the same along the bundle. Typically, the randomness in the cross section is generated by a stationary and ergodic point process. The evolution of the membrane potential, that is, the jump of the electric potential through the membrane, is modeled with the help of the FitzHugh--Nagumo dynamics. For a large number of axons in a bundle, we derive an effective model \eqref{eq:hom-prob} of the multidomain type. {The key contribution of the present work is the derivation of a non-deterministic  effective problem. The randomness of the geometry is encoded in the function $r(\omega)$ (see \eqref{r(omega)}) and the Palm measure \eqref{Palmm} corresponding to the surface measure of the axons' membrane. Considering stochastic geometry opens an opportunity to analyze, for example, how the concentration of axons of different radii influences the signal propagation in a bundle, which is not possible in a periodic setting.} 

To tackle the random geometry, one can make use of the ergodicity theorem directly \cite{zhikov1993averaging} or apply stochastic two-scale convergence introduced in \cite{bourgeat1994stochastic}. In \cite{blanc2007stochastic}, the authors suggest a different approach, defining the stochastic coefficients as stochastic deformations of the periodic setting using random diffeomorphisms. In the present work we use the stochastic two-scale convergence technique combined with the monotonicity method \cite{minty1962}. Another technical aspect of the homogenization procedure in our problem comes from the presence of the jump of the electric potential through the membrane. To pass to the limit in the surface terms, we use the Palm measure theory. A similar approach has been employed in \cite{piatnitski2020homogenization} for the proof of convergence of the boundary integrals involving the flow velocity and elastic deformations and also in \cite{heida2022stochastic-II} to tackle the Robin boundary conditions.

It is known that if the conductivity of the medium decreases, it is necessary to deliver a pulse of a larger field amplitude to achieve the same effect \cite{ivorra2010electrical}. The choice of a specific nonlinear conductivity is still phenomenological since the experimental data currently does not allow us to characterize the dependence of the conductivity on the electric field. In the present work, we impose hypothesis (H4) ensuring the monotonicity of the problem, which is clearly satisfied by, for example, the sigmoid-type functions.   
 
Bidomain models are traditionally used to describe the macroscopic behavior of the electric potential in cardiac electrophysiology  \cite{franzone2002degenerate, pennacchio2005, amar2013hierarchy, collin2018mathematical, GraKar2019, BenMroSaaTal2019}. In these works, the geometry of the tissue is periodic, and the conductivity is independent of the electric field. Moreover, both the  intra- and extracellular components of the cardiac tissue are connected, which stands in contrast to the case when axons are disconnected in the transversal direction. As a consequence, the effective multidomain model \eqref{eq:hom-prob} contains only derivatives of the intracellular potential in the longitudinal direction. 

The rest of this paper is organized as follows. In Section \ref{sec:geometry}, we define the random geometry of the nerve bundle. In Section \ref{sec:2scale}, we introduce the necessary definitions of the stochastic two-scale convergence in the mean, as well as the Palm measure. In Section \ref{sec:setup}, we formulate the microscopic problem, and Section \ref{sec:result} introduces the effective multidomain model. The proof of the main convergence result (Theorem \ref{th:main-short}) is given in Section \ref{sec:proof}. In the appendix, we provide some examples of stationary random media and perform the nondimensionalization of the problem to motivate the choice of the scaling in the microscopic model (the factor $\ve$ in \eqref{eq:orig-prob}).

\section{Geometry of nerve fascicles}
\label{sec:geometry}
A nerve fascicle is modeled as a cylindrical domain $G=(0,L)\times S$ in $\mathbb{R}^3$,  with length $L>0$ and cross section $S \subset \mathbb{R}^2$. We assume that $S$ has a Lipschitz boundary $\partial S$. The lateral boundary of the cylinder is denoted by $\Sigma=(0,L)\times \partial S$, and its bases by $S_0=\{0\} \times S$ and $S_L=\{L\} \times S$. In what follows, the points in $\mathbb R^3$ are denoted by $x = (x_1, x')$, $x_1\in \mathbb R, x'\in \mathbb R^2$.
	
We assume that the bulk of the fascicle consists of two disjoint separated by semipermeable membrane parts: an intracellular part formed by thin cylinders (axons) and an extracellular part. Let us describe the random geometry of the bundle that will originate from the random distribution of the axons in the cross section of the fascicle. 

Let $(\Omega,{\mathcal F}, {\mathcal P})$ be a probability space with measure ${\mathcal P}$ and the sigma-algebra ${\mathcal F}$. We will also assume that the Lebesgue space $L^p(\Omega, {\mathcal P})=L^p(\Omega, {\mathcal F}, {\mathcal P})$, $1\le p<\infty$, is separable. It is true if ${\mathcal F}$ is countably generated (see e.g. Proposition 3.4.5, \cite{cohn2013measure}).
	
Consider a two-dimensional dynamical system (of translations)  $T_{x'}:(\Omega,{\mathcal F}, {\mathcal P}) \rightarrow (\Omega,{\mathcal F}, {\mathcal P})$, $x'\in \mathbb R^2$, that is a family of invertible measurable maps with measurable inverses such that
\begin{itemize}
		\item[(i)]
		$T_0 = \rm Id$ and $ T_{x'+y'} = T_{x'} T_{y'}$, for all  $x', y' \in \mathbb{R}^2$.
		\item[(ii)]
		$T_{x'}$ is a measure-preserving transformation, that is ${\mathcal P} (T^{-1}_{x'}F) = {\mathcal P}(F)$ for all $x'\in \mathbb{R}^2, F\in {\mathcal F}$.
		\item[(iii)]
		$T_{x'}\omega$ is a measurable map from $(\mathbb{R}^2\times 
		\Omega, \mathcal B \times {\mathcal F}, dx' \times {\mathcal P})$ to $(\Omega, {\mathcal F})$, where $dx'$ is the standard Lebesgue measure and $\mathcal B$ is the Borel $\sigma$-algebra on $\mathbb{R}^2$.
\end{itemize}
We assume that $T_{x'}$ is ergodic, that is if $F\in {\mathcal F}$ is translation invariant, then $F$ has either full or zero measure:
$T_{x'} F = F, \,\, \forall x'\in \mathbb R^2 \Rightarrow {\mathcal P}(F)=0 \,\,\mbox{or}\, 1.$
	
Given $\omega\in \Omega$, consider the random set $A(\omega)$ defined as a collection of an infinite number of disjoint {open} discs with radii $r_j\in \mathbb{R}_+$:
\begin{align*}
&A(\omega)=\bigcup_{j=1}^{\infty}A_j(\omega), \\
&A_j(\omega)=A(x'_{j}(\omega),r_{j}(\omega))=\biggl\{x'\in \mathbb{R}^2: \left| x'-x'_{j}(\omega) \right| < r_{j}(\omega) \biggr\}.
\end{align*}
We assume that
\begin{enumerate}[label=(H\arabic*)]
\setcounter{enumi}{0}
\item \label{H1}
The distance between the discs is bounded from below:
\begin{align*}
	\mbox{dist} \bigl(A_{i}(\omega), A_{j}(\omega)\bigr)\geq d_0>0, \ \  \forall i\not= j.
\end{align*}
\item \label{H2} There exists $R_0>0$ such that a.s. any disc of radius $R_0$ in
 $\mathbb{R}^2$ has a nontrivial intersection with $A(\omega)$.
\item \label{H3}
The radii $r_j(\omega)$ of the discs, $\forall j$, are bounded from below and above by deterministic positive constants: 
$
0<\underline{r}<r_j(\omega)<\overline{r}
$.
\item \label{H4}
The dynamical system $T_{x'}$ is consistent with the random set $A(\omega)$, that is
\begin{align*}
A(T_{x'}\omega)= \bigcup_{j=1}^{\infty} A(x'_{j}(\omega)-x', r_{j}(\omega))  \ \ \mbox{for} \ {\mathcal P}-\mbox{a.s.} \, \mbox{in}\,\, \Omega.
\end{align*}
\end{enumerate}
We introduce prototypes of the random set $A(\omega)$, its closure $\bar A(\omega)$, and the boundary $\partial A(\omega)$:
\begin{align} 
&
\A =\{\omega\in \Omega: 0\in A(\omega)\}, \nonumber\\
& \bar{\A }=\{\omega\in \Omega: 0\in \bar{A}(\omega)\}, \label{def:prototypes}\\ 
&{\mathcal M}=\bar{\A }\setminus \A  = \{\omega\in \Omega: 0\in \partial A(\omega)\}.\nonumber
\end{align}	
Hypothesis \ref{H4} implies that
\begin{align}
	\label{def:cal-A}
    A(\omega)=\lbrace{ x'\in \mathbb{R}^2: T_{x'}\omega \in \A  \rbrace}.
\end{align}
Indeed, if $T_{x'}\omega \in \A $ then  $ 0\in \bigcup_{j=1}^{\infty} A(x'_{j}(\omega)-x', r_{j}(\omega))$, and consequently $ x'\in \bigcup_{j=1}^{\infty} A(x'_{j}(\omega), r_{j}(\omega))=A(\omega).$ Similarly, $\partial A(\omega)=\lbrace{ x'\in \mathbb{R}^2: T_{x'}\omega \in {\mathcal M} \rbrace}$.
 
For a small $\ve >0$ representing the microscale, we rescale the set $A(\omega)$: 
\begin{align}
\label{eq:A_eps}
A_\ve(\omega)= \bigcup_{j \in J_\ve} A_{\ve j}(\omega), \quad A_{\ve j}(\omega)=\ve A(x'_{j}(\omega),r_{j}(\omega)),
\end{align}
where the union is taken over the indices $j\in J_\ve$ such that the discs $A_{\ve j}(\omega)$ are separated from the boundary of $S$, namely $\mbox{dist} \bigl(A_{\ve j}(\omega), \partial S\bigr) > d_0\ve$, $d_0>0$. 

The random radii of the discs in the cross section is given by the following function:
\begin{align*}
	r_\ve(x,\omega) =
	\begin{cases}
		r_j(\omega),  \quad x\in \bar A_{\ve j}(\omega),\\
		0,  \quad \hfill x\notin \bar A_{\ve j}(\omega).
	\end{cases}
\end{align*}
Then $r_\ve(x,\omega)=r(T_{x'/\ve}\omega)$, where
\begin{align}
\label{r(omega)}
	r(\omega) =
	\begin{cases}
		r_{j_0}(\omega),  \quad \omega\in {\bar \A },\\
		0,  \quad \hfill \omega\notin {\bar \A },
	\end{cases}
\end{align}
and $r_{j_0}(\omega)>0$ is the radius of the disc $A(x'_{j_0}(\omega),r_{j_0}(\omega))$ containing 0 (if exists). 

The intracellular domain of $j$th axon is denoted by
$
F_{\ve j}(\omega)=(0,L) \times A_{\ve j}(\omega)$.
{The intracellular domain is then defined  as a random set of axons, and the extracellular is the complement to the intracellular one:
\begin{align}
\label{eq:F_eps}
	F_\ve(\omega)=(0,L)\times A_\ve(\omega), \quad
    G_\ve(\omega)=G\setminus F_\ve(\omega).
\end{align}
}
The membrane (lateral boundary) of axons is denoted by 
\begin{align}
\label{eq:M_eps}
M_\ve(\omega)=\partial F_\ve(\omega)\setminus (S_0\cup S_1).
\end{align}
{The geometry of the cross section and the three-dimensional bundle is illustrated in Figure \ref{fig:cross-section}.}
\begin{figure}[H]
\centering 
\def\svgwidth{.8\textwidth}
\begingroup%
  \makeatletter%
  \providecommand\color[2][]{%
    \errmessage{(Inkscape) Color is used for the text in Inkscape, but the package 'color.sty' is not loaded}%
    \renewcommand\color[2][]{}%
  }%
  \providecommand\transparent[1]{%
    \errmessage{(Inkscape) Transparency is used (non-zero) for the text in Inkscape, but the package 'transparent.sty' is not loaded}%
    \renewcommand\transparent[1]{}%
  }%
  \providecommand\rotatebox[2]{#2}%
  \newcommand*\fsize{\dimexpr\f@size pt\relax}%
  \newcommand*\lineheight[1]{\fontsize{\fsize}{#1\fsize}\selectfont}%
  \ifx\svgwidth\undefined%
    \setlength{\unitlength}{1264.2519685bp}%
    \ifx\svgscale\undefined%
      \relax%
    \else%
      \setlength{\unitlength}{\unitlength * \real{\svgscale}}%
    \fi%
  \else%
    \setlength{\unitlength}{\svgwidth}%
  \fi%
  \global\let\svgwidth\undefined%
  \global\let\svgscale\undefined%
  \makeatother%
  \begin{picture}(1,0.38605831)%
    \lineheight{1}%
    \setlength\tabcolsep{0pt}%
    \put(0,0){\includegraphics[width=\unitlength,page=1]{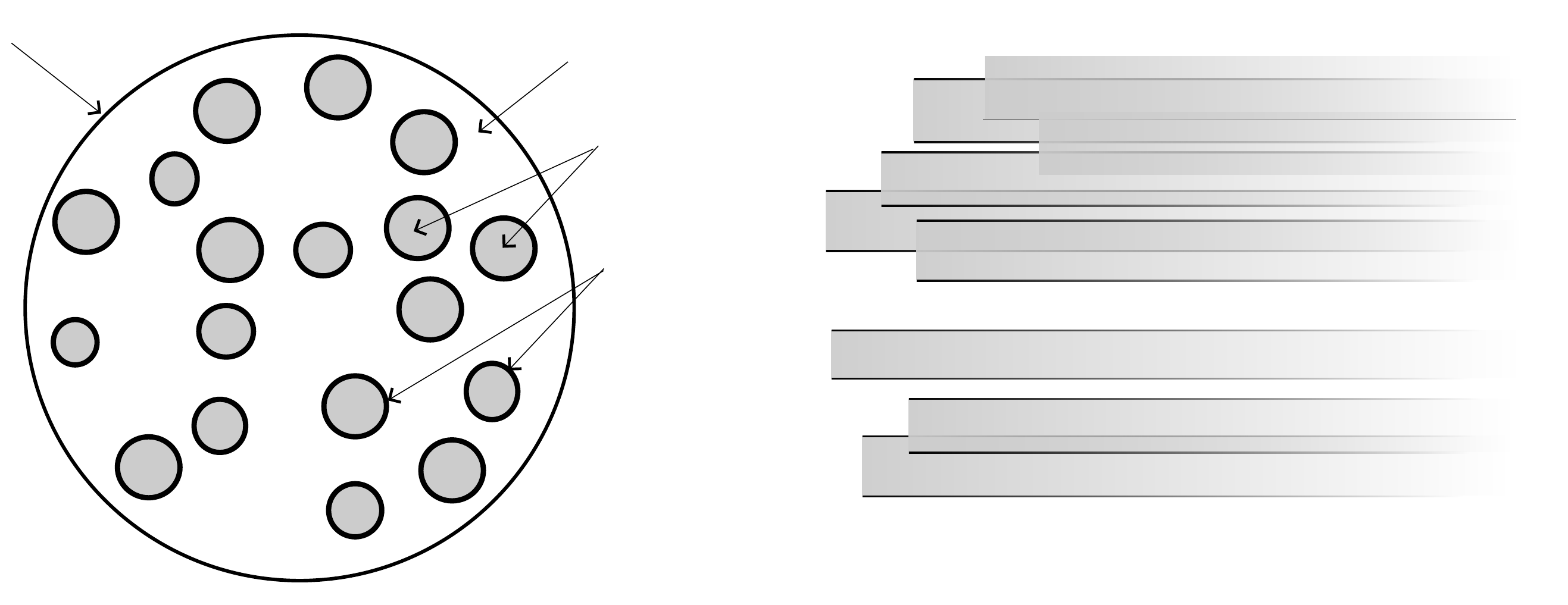}}%
    \put(0.00811085,0.36561019){\makebox(0,0)[lt]{\lineheight{1.25}\smash{\begin{tabular}[t]{l}$\Sigma$\end{tabular}}}}%
    \put(0.31073835,0.3547719){\makebox(0,0)[lt]{\lineheight{1.25}\smash{\begin{tabular}[t]{l}$G_\varepsilon(\omega)$\end{tabular}}}}%
    \put(0.35288063,0.30543295){\makebox(0,0)[lt]{\lineheight{1.25}\smash{\begin{tabular}[t]{l}$F_\varepsilon(\omega)$\end{tabular}}}}%
    \put(0.38340426,0.22600079){\makebox(0,0)[lt]{\lineheight{1.25}\smash{\begin{tabular}[t]{l}$M_\varepsilon(\omega)$\end{tabular}}}}%
    \put(0,0){\includegraphics[width=\unitlength,page=2]{random-bundle.pdf}}%
  \end{picture}%
\endgroup%

\caption{Fascicle $G$ with the lateral boundary $\Sigma$.}
\label{fig:cross-section}
\end{figure}

In what follows, we will use the Birkhoff ergodic theorem \cite{cornfeld2012ergodic, shiryaev2016probability}.
	\begin{theorem}[Birkhoff Ergodic Theorem]
		Let $f\in L^p(\Omega)$, $p\ge 1$. Then for almost all $\omega \in \Omega$, the realization $f(T_x\omega)$ possesses a mean value, that is
		\begin{align*}
			f(T_{x/\ve} \omega) \rightharpoonup \langle f(T_x \omega)\rangle = \lim_{\ve \to 0} \frac{1}{|K|}\int_K f(T_{x/\ve} \omega)\, dx.
		\end{align*}
		Moreover, the mean value $\langle f(T_x \omega)\rangle$ considered as a function of $\omega \in \Omega$, is invariant, and
		\begin{align*}
			\int_\Omega \langle f(T_x \omega)\rangle d{\mathcal P}(\omega)  = \int_\Omega f(\omega) \, d{\mathcal P}(\omega).
		\end{align*}
		If the system $T_x$ is ergodic, then  
		\begin{align*}
			\langle f(T_x \omega)\rangle = \int_\Omega f(\omega)\, d{\mathcal P}(\omega).
		\end{align*}
\end{theorem}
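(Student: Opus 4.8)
The plan is to deduce the statement from the pointwise (almost everywhere) multiparameter ergodic theorem for the continuous $\mathbb{R}^2$-action $T_x$, and then to upgrade the resulting scalar limit to the asserted weak convergence by a scaling and density argument. The key reduction is the change of variables $y=x/\ve$, under which
\begin{align*}
\frac{1}{|K|}\int_K f(T_{x/\ve}\omega)\,dx = \frac{1}{|\ve^{-1}K|}\int_{\ve^{-1}K} f(T_{y}\omega)\,dy,
\end{align*}
so that the limit $\ve\to 0$ is precisely the spatial average of the realization $y\mapsto f(T_y\omega)$ over the dilating family of sets $\ve^{-1}K$ exhausting $\mathbb{R}^2$. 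Thus the heart of the matter is to show that these spatial averages converge for a.e. $\omega$.

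First I would establish the pointwise convergence of the spatial averages $M_R f(\omega)=|RK|^{-1}\int_{RK}f(T_y\omega)\,dy$ as $R\to\infty$. The standard route has two ingredients. (a) A maximal inequality: the maximal operator $M^{*}f=\sup_{R>0}M_R|f|$ satisfies a weak-type $(1,1)$ bound (and a strong-type $(p,p)$ bound for $p>1$), obtained from a Wiener-type covering lemma for the sets $RK$ together with the measure-preserving property (ii) and the joint measurability (iii), which legitimize Fubini between $dy$ and $d{\mathcal P}$. (b) Convergence on a dense class: for functions of the form $h+(g-g\circ T_{a})$ with $h$ invariant and $g\in L^p(\Omega)$ the averages converge trivially, and such functions are dense. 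Combining (a) and (b) by the usual Banach-principle argument yields the a.e. (and $L^p$) convergence of $M_R f$ to a limit $\bar f(\omega)$ that is $T$-invariant by construction; this limit is by definition the mean value $\langle f(T_x\omega)\rangle$.

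Next I would settle the two remaining assertions. Invariance of $\langle f(T_x\omega)\rangle$ is immediate from the construction above. For the integral identity, integrating $M_R f$ over $\Omega$ and using (ii) gives $\int_\Omega M_R f\,d{\mathcal P}=\int_\Omega f\,d{\mathcal P}$ for every $R$; passing to the limit (justified by the $L^1$ convergence, or by dominated convergence controlled by the maximal function) yields $\int_\Omega \langle f\rangle\,d{\mathcal P}=\int_\Omega f\,d{\mathcal P}$. If $T_x$ is ergodic, the invariant function $\langle f\rangle$ is a.e. constant, and that constant equals its own integral, i.e. $\langle f\rangle=\int_\Omega f\,d{\mathcal P}$.

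Finally, the weak convergence $f(T_{x/\ve}\omega)\rightharpoonup\langle f\rangle$ follows by density. Writing $g_\ve(x)=f(T_{x/\ve}\omega)$, the scaling identity applied with $K$ replaced by an arbitrary cube $Q$ shows $\int_Q g_\ve\,dx\to |Q|\,\langle f\rangle(\omega)$ for a.e. $\omega$, hence $\int\varphi\,g_\ve\,dx\to \langle f\rangle(\omega)\int\varphi\,dx$ for every simple $\varphi$. The uniform bound $\|g_\ve\|_{L^p(K)}^p=\int_K|f(T_{x/\ve}\omega)|^p\,dx\to |K|\,\langle |f|^p\rangle(\omega)<\infty$, obtained by applying the pointwise theorem to $|f|^p$, gives equiboundedness in $L^p_{\mathrm{loc}}$, and approximating a general test function by simple functions upgrades the convergence. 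The main obstacle is the pointwise multiparameter ergodic theorem of the first step, specifically the maximal inequality for the continuous $\mathbb{R}^2$-action and the verification that the averaging family $\ve^{-1}K$ is regular enough (a F{\o}lner/Tempelman condition) for a.e. convergence; once this classical fact is available, the scaling and density steps are routine, which is why the statement is quoted from \cite{cornfeld2012ergodic, shiryaev2016probability} rather than reproved here.
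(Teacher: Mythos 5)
Your proposal is mathematically sound, but there is nothing in the paper to compare it against: the paper does not prove this statement at all. It is quoted as a classical result, with the proof delegated to the cited references \cite{cornfeld2012ergodic, shiryaev2016probability}, and is then used as a black box (e.g.\ to obtain \eqref{dens}). What you have written is a faithful outline of that classical proof of Wiener's pointwise ergodic theorem for continuous $\mathbb{R}^d$-actions: the rescaling $y=x/\ve$ reducing the claim to averages over dilating sets, the maximal inequality plus dense class (invariant functions and coboundaries $g-g\circ T_a$) combined through the Banach principle, the integral identity via measure preservation and $L^1$-convergence, and the upgrade to weak convergence by testing against indicators of cubes together with the equiboundedness coming from applying the pointwise theorem to $|f|^p$. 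Two small points deserve care if you were to write this out in full: first, the full-measure set of $\omega$ must be chosen to work simultaneously for a countable family of cubes (and for both $f$ and $|f|^p$), which a countable intersection handles; second, for $p=1$ the "dominated convergence controlled by the maximal function" alternative fails (the weak-type $(1,1)$ bound gives no integrable majorant), so one must use the $L^1$-mean convergence route you also mention. You correctly identify the maximal inequality and the F{\o}lner-type regularity of the averaging sets as the genuinely hard classical core — which is precisely why the paper cites the result instead of reproving it, and why your sketch, while correct, is reconstructing textbook material rather than anything specific to this paper.
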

Applying the Birkhoff ergodic theorem, we conclude that ${\mathcal P}$-a.s. there exists a limit density of axons as $\ve \to 0$:
\begin{align}
	\chi_{_{A_\ve}}\rightharpoonup \Lambda={\cal P}({\cal A})\ \ \mbox{weakly in}\ \ L^p(G), \,\,p\ge1.
	\label{dens}
\end{align} 

\section{Stochastic two-scale convergence in the mean} \label{sec:2scale}
In this section we recall the necessary definition of the two-scale convergence in the mean \cite{bourgeat1994stochastic} and introduce the notion of the two-scale convergence on the random surfaces. We also formulate the hypotheses \ref{H6}--\ref{H8} on the initial data $V_\ve, G_\ve$ and the external excitation $J_\ve^e$. 

Consider the group $\{U(x'): \, x'\in \mathbb R^2\}$ of operators on $L^2(\Omega, {\mathcal P})$ given for $f\in L^2(\Omega, {\mathcal P})$ by
\begin{align*}
	(U(x')f)(\omega) = f(T_{x'} \omega), \quad x'\in \mathbb R^2, \,\, \omega \in \Omega.
\end{align*}
$U(x')$ preserves inner product $(U(x')f, U(x')g)_{L^2(\Omega, {\mathcal P})} = (f,g)_{L^2(\Omega, {\mathcal P})}$ since (ii) is equivalent to 
\begin{align*}
	\int_\Omega f(\omega)\, d{\mathcal P} = \int_\Omega f(T_{x'} \omega)\, d{\mathcal P}, \quad \mbox{for all}\,\, f\in L^1(\Omega, {\mathcal P}).
\end{align*}
As $T_x$ is invertible, $U(x')$ is a unitary operator \cite{walters2007ergodic}. 

{Under the assumption that $L^2(\Omega, {\mathcal P})$ is separable, the group $\{U(x'): \,\, x'\in \mathbb R^2\}$ is strongly continuous, that is $U(x') f \to f$ strongly in $L^2(\Omega, {\mathcal P})$, as $x'\to 0$ (see Theorems 3.5.3 and 10.10.1 in \cite{hille1996functional}).} 

Denote by $(\nabla_\omega \,\cdot)_j$, $j=2, 3$, the generators of the strongly continuous group of unitary operators in $L^2(\Omega, \mathcal P)$ associated with translations $T_{x'}$ along $e_j$, the $j$th basis vector in $\mathbb R^3$. The domains $\mathcal D_j$ of these generators, $j=2, 3$, are dense in $L^2(\Omega, {\mathcal P})$ and
\begin{align*}
    (\nabla_\omega u)_j(\omega) = \lim_{\delta \to 0} \frac{u(T_{\delta e_j} \omega) - u(\omega)}{\delta}.
\end{align*}
{For each multi-index $\alpha=(\alpha_1,\alpha_2)$, we set $D^\alpha = (\nabla_\omega u)_1^{\alpha_1}(\nabla_\omega u)_2^
{\alpha_2}$ and define
\begin{align}
{\cal D}(\Omega)&={\cal D}_2\cap {\cal D}_3,\nonumber\\
\label{def:D^inft}
D^\infty(\Omega)&=\{f\in L^\infty(\Omega, \mathcal P): D^\alpha f\in L^\infty(\Omega, \mathcal P)\cap {\cal D}(\Omega), \, \forall \alpha\}.
\end{align}
}
Note that $D^\infty(\Omega)$ is dense in $L^2(\Omega, \mathcal P)$ (see Lemma 2.1 in \cite{bourgeat1994stochastic}). 
To formulate the assumptions and the main result we need the notion of stochastic two-scale convergence in the mean \cite{bourgeat1994stochastic} both in $G_T=[0,T]\times G$ and on $[0,T] \times M_\ve(\omega)$.
\begin{definition} 
\label{def:2scale-G}
A bounded sequence $\{u_\ve\}$ of functions in $L^2(G_T; L^2(\Omega, {\mathcal P}))$ converges  stochastically two-scale in the mean to
$u\in L^2(G_T; L^2(\Omega, {\mathcal P}))$, and we write $u_\ve \xrightharpoonup[]{2s} u$,
if for any 
$\varphi \in C(\bar G_T; L^2(\Omega, {\mathcal P}))$, we have
$$
\lim_{\ve\to 0} \int\limits_{G_T} \int \limits_\Omega u_\ve(t, x,\omega) \varphi\left(t, x,T_{x'/\ve}\omega\right) \, dt dx d{\mathcal P}(\omega) =
\int\limits_{G_T} \int\limits_\Omega u(t, x,\omega)\varphi\left(t, x,\omega\right) \, dt dx d{\mathcal P}(\omega).
$$
\end{definition}

To define the convergence on the random surfaces $M_\ve(\omega)$, we use the Palm measure corresponding to the surface measure of $M_\ve$ \cite{mecke1967stationare, miyazawa1995note}. 
\begin{lemma}[Palm measure $\mu$ and Campbell's formula]\label{lm:Campbell} There exists a unique measure $\mu$, called the Palm measure, on $\Omega$ such that
\begin{align}
\mu (B) =\int\limits_{\Omega} \int\limits_{\partial A(\omega)} \chi_{[0,1]^2}(x')\chi_{B}(T_{x'}\omega) ds(x')  d{\cal P}(\omega),
\label{Palmm}
\end{align}
where $ds(x')$ is the arc length measure of $\partial A(\omega)$, and $\chi$ is the characteristic function. Moreover, $\mu$ is $\sigma$-finite and
\begin{align}
\int_\Omega \int_{\partial A(\omega)} f(x', T_{x'}\omega)\, ds(x') \,d{\mathcal P}(\omega)
= 
\int_\Omega \int_{\mathbf R^2} f(x', \omega)\, dx' d\mu(\omega),
\label{Campbell}
\end{align}
for any $f(x', \omega) \in L^1(\mathbf R^2 \times \Omega, dx'\times d\mu)$.
\end{lemma}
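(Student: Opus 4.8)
The statement is the Mecke construction of the Palm measure associated with the stationary random measure $\nu_\omega(C)=\int_{\partial A(\omega)}\chi_C(x')\,ds(x')$, $C\in\mathcal B$, i.e.\ the one-dimensional Hausdorff measure $\mathcal H^1$ carried by the membrane traces. The plan is to take \eqref{Palmm} as the \emph{definition} of $\mu$ and then verify, in turn, that (i) it is a well-defined finite measure on $(\Omega,\mathcal F)$, (ii) the family $\{\nu_\omega\}$ is covariant under the flow $T_{x'}$, and (iii) Campbell's formula \eqref{Campbell} follows from (ii) by a covering argument together with the usual monotone-class extension. Uniqueness is then immediate, since \eqref{Palmm} prescribes $\mu(B)$ for every $B\in\mathcal F$; moreover, testing \eqref{Campbell} against $f(x',\omega)=\chi_{[0,1]^2}(x')\chi_B(\omega)$ shows that any measure obeying Campbell's formula must coincide with the $\mu$ of \eqref{Palmm}.

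First I would check that the right-hand side of \eqref{Palmm} makes sense and is countably additive in $B$. Joint measurability of $(x',\omega)\mapsto T_{x'}\omega$ is assumption (iii), so $(x',\omega)\mapsto\chi_B(T_{x'}\omega)$ is measurable; combined with the measurable dependence of the circles (centres and radii) on $\omega$, the inner arc-length integral is a measurable function of $\omega$, hence $\mathcal P$-integrable. Countable additivity follows from Tonelli applied to $\chi_{\sqcup_n B_n}=\sum_n\chi_{B_n}$. For \textbf{finiteness} (hence $\sigma$-finiteness) I would estimate $\mu(\Omega)=\int_\Omega\mathcal H^1(\partial A(\omega)\cap[0,1]^2)\,d\mathcal P$: by \ref{H3} each boundary circle has length at most $2\pi\overline r$, and by \ref{H1} the $d_0$-separation of the discs together with the radius bound \ref{H3} yields, via a packing estimate, a deterministic bound $N_0$ on the number of circles meeting $[0,1]^2$; thus $\mathcal H^1(\partial A(\omega)\cap[0,1]^2)\le 2\pi\overline r\,N_0$ a.s.\ and $\mu(\Omega)<\infty$.

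The core of the argument is the \emph{covariance identity}. From \eqref{def:cal-A} and \ref{H4} one has $\partial A(T_{-z}\omega)=\partial A(\omega)+z$, while $ds$ is translation invariant and $\mathcal P$ is preserved by $T_{-z}$ (assumption (ii)). Performing the substitution $\omega\mapsto T_{-z}\omega$ followed by $x'=y'+z$ in the defining integral yields, for every Borel $D\subset\mathbb R^2$ and every $z\in\mathbb R^2$,
\begin{equation*}
\int_\Omega\!\int_{\partial A(\omega)}\chi_D(x'-z)\,\chi_B(T_{x'}\omega)\,ds(x')\,d\mathcal P(\omega)
=\int_\Omega\!\int_{\partial A(\omega)}\chi_D(x')\,\chi_B(T_{x'}\omega)\,ds(x')\,d\mathcal P(\omega),
\end{equation*}
i.e.\ the functional $D\mapsto\Psi_D:=\int_\Omega\int_{\partial A}\chi_D\,\chi_B(T_{x'}\omega)\,ds\,d\mathcal P$ is invariant under translating $D$. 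Writing $\Psi(g)$ for its extension to nonnegative $g$, this invariance plus additivity makes $\Psi$ constant on all dyadic cells of a given generation; decomposing $[0,1]^2$ into the $2^{2n}$ dyadic subsquares of side $2^{-n}$ and using $\Psi_{[0,1]^2}=\mu(B)$ forces $\Psi(\chi_Q)=|Q|\,\mu(B)$ for every dyadic square $Q$, hence $\Psi(g)=\big(\int_{\mathbb R^2}g\big)\mu(B)$ for all $g\in L^1(\mathbb R^2)$ by monotone convergence. This is exactly \eqref{Campbell} for $f(x',\omega)=g(x')\chi_B(\omega)$; linearity and a monotone-class/MCT argument then extend it to all $f\in L^1(\mathbb R^2\times\Omega,\,dx'\times d\mu)$.

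I expect the principal obstacle to be not the algebra above but the \textbf{measurability bookkeeping} underlying it: one must verify that $\omega\mapsto\nu_\omega$ is a genuine measurable, $T$-covariant random measure of finite intensity, so that the inner surface integrals are measurable in $\omega$ and Tonelli/Fubini may be applied freely at each step. Once the disc process $\omega\mapsto(x'_j(\omega),r_j(\omega))$ is known to be measurable and \ref{H1}, \ref{H3} provide the local finiteness and the uniform length bound, this reduces to a routine parametrization of each circle; with that in hand, the covariance identity and the covering bootstrap complete the proof, in agreement with the classical Palm-measure results of \cite{mecke1967stationare, miyazawa1995note}.
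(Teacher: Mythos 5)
The paper offers no proof of this lemma at all: it is stated as a known result on Palm measures, with the citations \cite{mecke1967stationare, miyazawa1995note} standing in for the argument, and the only in-text discussion concerns consequences of the statement. Your proposal is therefore not competing with an in-paper proof but reconstructing the classical one, and it does so correctly along the standard Mecke route: take \eqref{Palmm} as the definition of $\mu$; use \ref{H4} (equivalently \eqref{def:cal-A}) to get the covariance $\partial A(T_{-z}\omega)=\partial A(\omega)+z$, which together with the $T_{x'}$-invariance of $\mathcal P$ makes your set function $D\mapsto\Psi_D$ a translation-invariant, locally finite Borel measure on $\mathbb R^2$, hence a multiple of Lebesgue measure; this is Campbell's formula for $f=g(x')\chi_B(\omega)$, and a $\pi$-system/monotone-class argument extends it to all of $L^1(dx'\times d\mu)$, with uniqueness immediate because \eqref{Palmm} prescribes $\mu(B)$ for every $B\in\mathcal F$. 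Beyond self-containedness, your argument buys something the bare citation does not: under \ref{H1} and \ref{H3} the packing bound makes the total arc length of $\partial A(\omega)$ inside $[0,1]^2$ deterministically bounded, so $\mu$ is in fact a \emph{finite} measure, stronger than the $\sigma$-finiteness asserted in the statement. Two details you should write out if this argument were included: (a) the disjoint dyadic squares of generation $n$ tile $[0,1)^2$ rather than $[0,1]^2$, so one must note that $\Psi$ assigns zero mass to line segments (immediate from translation invariance plus finiteness, since a bounded set contains countably many disjoint translates of a segment); (b) the measurability of $\omega\mapsto\int_{\partial A(\omega)}h(x',\omega)\,ds(x')$, which you rightly flag, rests on the centres $x'_j(\omega)$ and radii $r_j(\omega)$ being measurable in $\omega$ --- implicit in the paper's setup. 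Neither point is a gap in substance.
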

The ergodicity implies that $\mu$ is independent of the choice of $f$.\\
\begin{remark}
Taking $f(x',\omega)$ such that $f=0$ for $\omega\in \mathcal M$, and using the definition of the prototype set $\mathcal M$, we see that for all such functions
\begin{align*}
\int\limits_\Omega \int \limits_{\mathbb R^2} f(x',\omega)\, dx' d\mu(\omega)=0.
\end{align*}
Thus, the integral over $\Omega$ in the right-hand side of\eqref{Campbell} can be replaced by the integral over $\mathcal M$. The same applies also to the two-scale convergence definitions below.
\end{remark}

\begin{definition}[Stochastic two-scale convergence on random surfaces] \label{def_2s_on_ax_t} A sequence $\{u_\ve=u_\ve(t,x,\omega)\}$ of functions on the random surface $M_\ve$, such that for some $1<p<\infty$,
\begin{align*}
\int \limits_\Omega \int\limits_0^T\int\limits_{M_\ve(\omega)} 
|u_\ve|^p \, dS(x_1, x')\, dt\, d{\mathcal P}(\omega) \le C,
\end{align*}
converges  stochastically two-scale in the mean to
$u=u(t,x,\omega)\in L^p(G_T; L^p(\Omega, \mu))$ if
\begin{align*}
   	\lim_{\ve\to 0} &\,\, \ve \int\limits_{\Omega} \int \limits_0^T  
   	\int\limits_{M_\ve(\omega)} u_\ve\, \varphi\left(t, x,T_{x'/\ve}\omega\right) \,dS(x)\, dt\,  d{\mathcal P}(\omega)=
   	\int\limits_{\Omega}  \int\limits_{G_T} u\,\varphi\left(t, x,\omega\right)\, dx\,dt\,  d\mu(\omega),
\end{align*}
for any 
$\varphi \in C(\bar G_T; L^q(\Omega, \mu))$, $\displaystyle q= p/(p-1)$. Here we write $dS(x_1, x')= dx_1 ds(x')$ for the surface measure on $M_\ve(\omega)$.
\end{definition}

We will also use stochastic two-scale convergence in the mean pointwise in time $t\in [0,T]$. 
\begin{definition} 
\label{def_2s_on_ax}We say that s sequence $\{u_\ve=u_\ve(t,x,\omega)\}$ of functions on the random surface $[0,T]\times M_\ve(\omega)$ such that for $1<p<\infty$,
\begin{align*}
    \ve \int \limits_\Omega  
   	\int\limits_{M_\ve(\omega)}
|u_\ve|^p\, dS(x)\, d{\mathcal P}(\omega) \le C,
\end{align*}
converges stochastically two-scale in the mean to
$u=u(t,x,\omega)\in L^p(G_T; L^p(\Omega, \mu))$, pointwise in time, if for any $t\in [0,T]$
\begin{align*}
   	\lim_{\ve\to 0} &\ \ve \int\limits_{\Omega}  
   	\int\limits_{M_\ve(\omega)} u_\ve\, \varphi\left(t, x,T_{x'/\ve}\omega\right) dS(x) d{\mathcal P}(\omega)=
   	\int\limits_{\Omega}  \int\limits_{G} u\,\varphi\left(t, x,\omega\right) dx  d\mu(\omega),
\end{align*}
for any 
$\varphi \in C(\bar G_T; L^q(\Omega, \mu))$, $\displaystyle q= p/(p-1)$.
\end{definition}

Test functions in Definitions  \ref{def_2s_on_ax_t}, \ref{def_2s_on_ax} have the following property.
\begin{lemma}
\label{eq:conv-test-func-surfaces}
For any $\varphi \in C(\bar G_T; L^q(\Omega, \mu))$, $1<q<\infty$, and any $t\in[0,T]$, it holds
\begin{align*}
    \lim_{\ve\to 0} & \,\, \ve \int\limits_{\Omega} 
   	\int\limits_{M_\ve(\omega)} |\varphi\left(t, x,T_{x'/\ve}\omega\right)|^q \,dS(x)\, dt \, d{\mathcal P}(\omega)=
   	\int\limits_{\Omega}  \int\limits_{G} |\varphi\left(t, x,\omega\right)|^q \,dt\, dx \, d\mu(\omega).
\end{align*}
\end{lemma}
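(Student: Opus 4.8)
The plan is to reduce the surface integral to the fixed prototype boundary $\partial A(\omega)$ using the exact self-similarity $A_\ve(\omega)=\ve A(\omega)$ built into \eqref{eq:A_eps}, and then to invoke Campbell's formula \eqref{Campbell} to trade the surface integral for an integral against the Palm measure $\mu$. Fix $t\in[0,T]$; the time integration, if present, is recovered at the end by Fubini since all bounds are uniform in $t$. Since $F_\ve(\omega)=(0,L)\times A_\ve(\omega)$ by \eqref{eq:F_eps}, the membrane \eqref{eq:M_eps} is $M_\ve(\omega)=(0,L)\times\partial A_\ve(\omega)$ and $dS(x)=dx_1\,ds(x')$. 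First I would perform the change of variables $x'=\ve y'$ in the cross-sectional integral: each boundary circle of $A_\ve(\omega)$ is the $\ve$-dilation of the corresponding circle of $\partial A(\omega)$, so $y'\in\partial A(\omega)$, $ds(x')=\ve\,ds(y')$, and $T_{x'/\ve}\omega=T_{y'}\omega$. Writing $\chi_\ve(y')$ for the indicator of those $y'$ whose disc meets the admissibility constraint $\mathrm{dist}(A_{\ve j}(\omega),\partial S)>d_0\ve$ (equivalently, after scaling, $\mathrm{dist}(A_j(\omega),\partial S/\ve)>d_0$), the left-hand side becomes
\[
\ve^2\int_\Omega\int_{\partial A(\omega)}\Big(\int_0^L \chi_\ve(y')\,|\varphi(t,x_1,\ve y',T_{y'}\omega)|^q\,dx_1\Big)\,ds(y')\,d{\mathcal P}(\omega),
\]
where the second factor $\ve$, beyond the prefactor $\ve$ of the statement, comes from $ds(x')=\ve\,ds(y')$.

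Next I would apply Campbell's formula \eqref{Campbell} with $f(y',\omega)=\ve^2\chi_\ve(y')\int_0^L|\varphi(t,x_1,\ve y',\omega)|^q\,dx_1$. This $f$ lies in $L^1(\mathbb R^2\times\Omega,\,dy'\times d\mu)$: its $y'$-support is contained in the bounded set $S/\ve$, and $\int_\Omega|\varphi(t,x_1,\ve y',\omega)|^q\,d\mu=\|\varphi(t,x_1,\ve y',\cdot)\|_{L^q(\Omega,\mu)}^q$ is bounded uniformly by $\sup_{\bar G_T}\|\varphi\|_{L^q(\Omega,\mu)}^q<\infty$ because $\varphi\in C(\bar G_T;L^q(\Omega,\mu))$. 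Campbell's identity turns the last display into $\int_\Omega\int_{\mathbb R^2}f(y',\omega)\,dy'\,d\mu(\omega)$, and undoing the dilation $y'=x'/\ve$ (so $dy'=\ve^{-2}dx'$) cancels the $\ve^2$ and gives
\[
\int_\Omega\int_S\int_0^L \chi_\ve(x'/\ve)\,|\varphi(t,x_1,x',\omega)|^q\,dx_1\,dx'\,d\mu(\omega).
\]
Finally, $\chi_\ve(x'/\ve)$ is supported in $S$ minus a boundary layer of width $O(\ve)$ and converges to $\chi_S(x')$ for a.e.\ $x'$; since $|\varphi|^q$ is $dx\times d\mu$-integrable on $G\times\Omega$, dominated convergence yields the right-hand side $\int_\Omega\int_G|\varphi(t,x,\omega)|^q\,dx\,d\mu(\omega)$ as $\ve\to0$.

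The main obstacle is not a hard estimate: thanks to the exact self-similarity of \eqref{eq:A_eps}, the Birkhoff theorem is not even needed here, and Campbell's formula does the essential work. The genuine point of rigor is making sense of the pointwise restriction $\omega\mapsto\varphi(t,x,T_{x'/\ve}\omega)$ on the random surface when $\varphi(t,x,\cdot)$ is only a $\mu$-equivalence class and $\mu$ may be singular with respect to ${\mathcal P}$. I would resolve this by exploiting the continuity of $\varphi$ in $(t,x)$ together with the density of finite sums of products $\phi(t,x)\psi(\omega)$, for which $(y',\omega)\mapsto\varphi(t,x_1,\ve y',T_{y'}\omega)$ is manifestly measurable and the computation above is literal; the general case then follows by approximation using the uniform bound $\sup_{\bar G_T}\|\varphi\|_{L^q(\Omega,\mu)}$. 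The remaining point, that the admissibility indicator $\chi_\ve$ contributes only a vanishing boundary-layer error, is exactly the dominated-convergence step of the previous paragraph and is routine.
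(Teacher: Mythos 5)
Your overall route---dilating to the prototype boundary $\partial A(\omega)$, invoking Campbell's formula \eqref{Campbell}, rescaling back, and removing the $O(\ve)$ boundary layer by dominated convergence---is exactly the paper's proof. There is, however, one step that fails as written, and it is precisely the point the paper takes care to circumvent. You apply Campbell's formula with $f(y',\omega)=\ve^2\chi_\ve(y')\int_0^L|\varphi(t,x_1,\ve y',\omega)|^q\,dx_1$, treating the admissibility indicator $\chi_\ve$ as a function of $y'$ alone. It is not: which discs satisfy $\mathrm{dist}(A_{\ve j}(\omega),\partial S)>d_0\ve$ in \eqref{eq:A_eps} depends on the realization $\omega$. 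Campbell's formula converts $\int_\Omega\int_{\partial A(\omega)}f(y',T_{y'}\omega)\,ds(y')\,d{\mathcal P}(\omega)$ into $\int_\Omega\int_{\mathbb R^2}f(y',\omega)\,dy'\,d\mu(\omega)$, so it applies only to integrands whose $\omega$-dependence enters through the shifted environment $T_{y'}\omega$; the $\omega$-dependence hidden in your $\chi_\ve$ is not channeled this way, so the identity you invoke is not an instance of \eqref{Campbell}, and after the purported application the symbol ``$\chi_\ve(x'/\ve)$'' under the $\mu$-integral is not even well defined (admissibility refers to the configuration variable, which Campbell has traded away). This is exactly why the paper inserts a \emph{deterministic} cutoff $\chi_{S_{\delta(\ve)}}(x')$ with $S_{\delta(\ve)}\Subset S$ chosen independent of $\omega$, and explicitly remarks that this independence is what legitimizes the use of Campbell's formula.

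The gap is repairable along the lines you gesture at, and the repair is the paper's device. By \ref{H1} and \ref{H3} the random indicator is sandwiched between two deterministic ones: if $\mathrm{dist}(\ve y',\partial S)>(d_0+2\overline r)\ve$, then the disc whose boundary contains $\ve y'$ is automatically admissible, while if that disc is admissible, then $\mathrm{dist}(\ve y',\partial S)>d_0\ve$. Since $|\varphi|^q\ge 0$, the surface integral is squeezed between the two integrals carrying these deterministic cutoffs; Campbell's formula now applies legitimately to each, and both resulting $\mu$-integrals converge to $\int_\Omega\int_G|\varphi(t,x,\omega)|^q\,dx\,d\mu(\omega)$ because the cutoffs differ only on a boundary layer of width $O(\ve)$. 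This is, in essence, the paper's choice $\delta(\ve)=(d_0+2\overline r)\ve$. Your remaining points---the $L^1$ bound needed for Campbell via $\sup_{\bar G_T}\|\varphi(t,x,\cdot)\|_{L^q(\Omega,\mu)}<\infty$, and handling measurability of $\omega\mapsto\varphi(t,x,T_{x'/\ve}\omega)$ by density of finite sums of products---are sound and consistent with the paper.
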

\begin{proof}
By \eqref{eq:A_eps} we remove the discs that are too close to the boundary $\partial S$. Namely, there exists $\delta$ such that the random set $A_\ve(\omega)$ is contained in a compact subset $S_{\delta(\ve)} \Subset S$ with ${\rm dist}(S_{\delta(\ve)}, \partial S) = \delta(\ve)$, and $\delta(\ve) \to 0$ as $\ve \to 0$, e.g. $\delta =(d_0 + 2\overline r)\ve$.  
Changing variables $y'=x'/\ve$, using the Campbell formula, and rescaling back, we obtain:
\begin{align*}
&\ve \int\limits_{\Omega} \int \limits_0^T \int\limits_{0}^{L} 
\int\limits_{\partial A_\ve(\omega)} 
\chi_{S_{\delta(\ve)}}(x')|\varphi\left(t, x,T_{x'/\ve}\omega\right)|^q \,ds(x')\, dx_1 \, dt \, d{\mathcal P}(\omega)\\
&=\ve^2\int\limits_{\Omega} \int \limits_0^T \int\limits_{0}^{L} 
\int\limits_{\partial A(\omega)} \chi_{S_{\delta(\ve)}}(\ve y') |\varphi\left(t, x_1, \ve y',T_{y'}\omega\right)|^q \,ds(y')\, dx_1 \, dt \, d{\mathcal P}(\omega)\\
&= \ve^2 \int\limits_{\Omega} \int \limits_0^T \int\limits_{0}^{L} 
\int\limits_{\mathbf R^2} \chi_{S_{\delta(\ve)}}(\ve y') |\varphi\left(t, x_1, \ve y',\omega\right)|^q \,dy'\, dx_1 \, dt \, d\mu(\omega)\\
&=\int\limits_{\Omega} \int \limits_0^T \int\limits_{0}^{L} 
\int\limits_{\mathbf R^2} \chi_{S_{\delta(\ve)}}(x') |\varphi\left(t, x, \omega\right)|^q \,dx'\, dx_1 \, dt \, d\mu(\omega).
\end{align*}
Note that, in order to apply the Campbell formula, the compact subset $S_{\delta(\ve)}$ should be chosen independent of $\omega$. 
Passing to the limit, as $\ve \to 0$ completes the proof of the lemma. 
\end{proof}

Together with the weak stochastic two-scale convergence in the mean introduced above, we will need a strong version of it.
\begin{definition}\label{def:strong-conv-surface} We say that $u_\ve$ defined on $[0,T]\times M_\ve(\omega)$
converges \textbf{strongly} stochastically two-scale in the mean to 
$u\in L^p(G_T; L^p(\Omega, \mu))$ if $u_\ve$ converges stochastically two-scale in the mean to $u$ and 
\begin{align*}
   	\lim_{\ve\to 0} &\ve \int\limits_{\Omega} \int \limits_0^T
   	\int\limits_{M_\ve(\omega)} |u_\ve|^p dS(x)\, dt \, d{\mathcal P}(\omega) =
   	\int\limits_{\Omega}  \int\limits_{G_T} |u|^p \,dt\, dx \, d\mu(\omega).
\end{align*}
\end{definition}

\section{Problem setup}
\label{sec:setup}

Let $u_\ve=u_\ve(t,x,\omega)$ denote the electric potential in the bundle $G = G_\ve(\omega) \cup F_\ve(\omega)$, where $F_\ve(\omega), G_\ve(\omega)$, and $M_\ve(\omega)$ are defined by \eqref{eq:F_eps}--\eqref{eq:M_eps}, respectively. We assume that the conductivity is a nonlinear function of the electric field strength:
\begin{align*}
		u_\ve = \left\{
		\begin{array}{l}
			u_\ve^e \quad \mbox{in}\,\, G_\ve(\omega),\\[1mm]
			u_\ve^{i} \quad \mbox{in}\,\, F_\ve(\omega),
		\end{array}
		\right. \quad
        \sigma_\ve(x, \omega, |\nabla u_\ve|) = \left\{
		\begin{array}{l}
			\sigma^e(|\nabla u^e_\ve|)\quad \mbox{in}\,\, G_\ve(\omega),\\[1mm]
			\sigma^i(|\nabla u^i_\ve|) \quad \mbox{in}\,\, F_\ve(\omega).
		\end{array}
		\right.
\end{align*}
\begin{enumerate}[label=(H\arabic*)]
\setcounter{enumi}{3}
\item \label{H5}
The functions $\sigma^{e,i}(\eta)$ are continuously differentiable and satisfy the conditions
\begin{align*}
0< \underline{\sigma}\leq\sigma^{e,i}(\eta) \leq \overline{\sigma}, 
\ \ \ \  
\underline{\sigma}< \eta\,\frac{d}{d\eta}\sigma^{e,i} (\eta ) + \sigma^{e,i}(\eta ), 
 \ \ \ \  \forall\eta \in \mathbb{R}_+.
\end{align*}
\end{enumerate}
In Lemma \ref{lm:monotonicity} we prove that \ref{H5} implies monotonicity of $\sigma^e(|\xi|) \xi$, $\sigma^i(|\xi|) \xi$.

{
\begin{remark}
The choice of nonlinear conductivity depending on $|\nabla u|$ is motivated by the use of such models in oncology \cite{bihoreau2023mathematical}. 
Examples of conductivity functions satisfying assumption {(H4)} include sigmoid functions. A typical choice for tissue conductivity in electroporation models is a four-parameter sigmoid function \cite{jankowiak2020comparison}
\begin{align}
\sigma(\eta) = \sigma_0 + \frac{\sigma_1-\sigma_0}{2} (1 + \rm{erf}(k_{\rm ep}(\eta - E_{\rm th}))), \quad \eta \in \mathbb R_+.
\end{align}
Here, $\sigma_0$ is the conductivity of the non-electroporated tissue, $\sigma_1>\sigma_0$ is the conductivity of the fully electroporated tissue, $k_{\rm ep}$ is the slope of the nonlinearity, and $\rm{erf}$ is the Gauss error function.  
\end{remark}
}

The jump $v_\ve = [u_\ve] = u_\ve^{i} - u_\ve^e$ through {the lateral boundaries of axons 
$M_\ve(\omega)$ is called the transmembrane potential (or just membrane potential). 
On the membrane $M_\ve(\omega)$ we assume the current continuity {$\sigma^e(|\nabla u_\ve^e|)\nabla u_\ve^e\cdot \nu  = \sigma^i(|\nabla u_\ve^{i}|)\nabla u_\ve^i\cdot \nu$ ($\nu$ is the unit normal on $M_\ve(\omega)$ external to the axons)}, and {adopt the FitzHugh-Nagumo \cite{fitzhugh1955mathematical,nagumo1962active} model for the ionic current on $M_\ve(\omega)$:}
\begin{align*}
		&I_{\rm ion}(v_{\ve}, g_{\ve}) = \frac{v^3_{\ve}}{3} - v_{\ve} - g_{\ve},\\
		&\partial_t {g}_{\ve} = \theta v_{\ve} + a - bg_{\ve}, \quad \theta, a, b >0,
\end{align*}
{
where $I_{\rm ion}(v_{\ve}, g_{\ve})$ is the ionic current, ${g}_{\ve}$ stands for the  recovery variable and $\theta$, $a$, $b$ are constants.
}  

The evolution of the electric potential $u_\ve=u_\ve(t, x, \omega)$ in $G_T=[0,T]\times G$ is described by the following coupled system for {$t\in [0,T]$}:
\begin{align}
		&\di{\sigma^i(|\nabla u_\ve|) \nabla u_\ve}  =0 \,\,& \mbox{in}\,\, & F_\ve(\omega), \nonumber \\
		&\di{\sigma^e(|\nabla u_\ve|) \nabla u_\ve}  =0 \,\,& \mbox{in}\,\, & G_\ve(\omega), \nonumber \\	
	    &\sigma^e(|\nabla u_\ve^e|)\nabla u_\ve^e\cdot \nu  = \sigma^i(|\nabla u_\ve|) \nabla u_\ve^{i}\cdot \nu\,\,& \mbox{on}\,\, &  M_\ve(\omega), \ \ \nonumber\\
		&v_\ve  = u^{i}_\ve-u^e_\ve\,\,& \mbox{on}\,\, &  M_\ve(\omega), \ \ \quad \quad \nonumber\\
		&\ve(c_m\partial_t v_\ve + I_{\rm ion}(v_\ve, g_\ve))
		= - \sigma^i(|\nabla u_\ve|) \nabla u_\ve^{i}\cdot \nu\,\,& \mbox{on}\,\, &  M_\ve(\omega), \ \ \quad \quad
		\label{eq:orig-prob} \\
		&\partial_t g_\ve  = \theta v_\ve + a - bg_\ve\,\,& \mbox{on}\,\, & M_\ve(\omega),  \nonumber \\
		&\sigma^e(|\nabla u_\ve^e|)\nabla u_\ve^e\cdot \nu   = J_\ve^e(t,x)\,\,& \mbox{on}\,\, & \Sigma, \nonumber \\
		&u_\ve   = 0\,\,& \mbox{on}\,\, & S_{0} \cup S_L, \nonumber \\ 
		& v_\ve(0,x, \omega) =V_\ve(x,\omega), \,\,g_\ve(0,x,\omega)=G_\ve(x, \omega)\,\,& \mbox{on}\,\, & M_\ve(\omega), \nonumber
\end{align}
where $\nu$ denotes the external unit normal {on $M_\ve(\omega)$ (exterior to $F_\ve(\omega)$), the bases $S_0, S_L$, and $\Sigma$.} 
The function $J_\ve^e(t,x)$ represents an external {boundary} excitation of the nerve fascicle, and the constant $c_m$ is the membrane capacity. 
{Note that the only source of randomness is the distribution of axons in the cross section.}

We study the asymptotic behavior of $v_\ve$, as $\ve \to 0$, and derive a macroscopic model under the following conditions {on the initial and boundary data}:
\begin{enumerate}[label=(H\arabic*)]
\setcounter{enumi}{5}
\item \label{H6}
The initial data $V_\ve$ admits an extension in $G$ still denoted $V_\ve$, such that  $\|V_\ve\|_{H^1(G)} \le C$ and $V_\ve=0$ on $S_0\cup S_L$. We assume that $V_\ve(x,\omega)$ converges 
stochastically two-scale in the mean to $V_0\in L^2(G; L^2(\Omega, \mu))$, as $\ve\to 0$.
		
\item \label{H7}
The initial function $G_\ve(x, \omega)$ converges strongly stochastically two-scale in the mean 
to $G_0\in L^2(G; L^2(\Omega, \mu))$.
\vspace{2mm}
\item \label{H8}
The external excitation $J_\ve^e \in L^2((0,T)\times \Sigma)$ converges weakly to $J_0^e(t,x)$, as $\ve \to 0$, and
	\begin{align*}
		\int \limits_0^T 
  \int \limits_{\Sigma} |\partial_t J_\ve^e |^2\, dS(x) dt \le C.
	\end{align*}
\end{enumerate}

\begin{remark}
Assumption \ref{H6} together with Theorem 3.7 in \cite{bourgeat1994stochastic} guarantees strong stochastic two-scale convergence and that the limit function $V_0(x)$ does not depend on $\omega$. Moreover, due to Lemma \ref{lm:apr est aux}, the initial data satisfies
$$
  \ve \, \int\limits_{M_\ve(\omega)} V_\ve^4 \, dS(x) \le C.
$$ 
 
\end{remark}

Problem \eqref{eq:orig-prob} can be written as an evolution equation with a non-local operator on the membranes $M_\ve(\omega)$ of axons:
\begin{align}
\label{eq:orig-prob-nonlocal}
&\ve c_m \partial_t v_\ve + {\mathcal L}_\ve(t, v_\ve) + \ve I_{\rm ion}(v_\ve, g_\ve)=0 \,\,&\mbox{on}\,\, &(0,T)\times M_\ve(\omega), \nonumber\\
&\partial_t g_\ve  = \theta v_\ve + a - bg_\ve\,\,& \mbox{on}\,\, & (0,T)\times M_\ve(\omega),\\
&v_\ve(0,x, \omega) =V_\ve(x,\omega), \,\,g_\ve(0,x,\omega)=G_\ve(x, \omega)\,\,& \mbox{on}\,\, & M_\ve(\omega).\nonumber
\end{align}
The non-local integro-differential operator
\begin{align}
\label{def:oper-Leps}
{\mathcal L}_\ve(t, \cdot): D({\mathcal L}_\ve) \subset H^{1/2}(M_\ve(\omega)) \to H^{-1/2}(M_\ve(\omega))
\end{align} 
is defined, for $t\in[0,T]$, as follows: For any $\phi\in H^1(F_\ve(\omega) \cup G_\ve(\omega))$, $\phi|_{S_0\cup S_L}=0$, we set
\begin{align}
	\label{def:L_eps}
	&({\mathcal L}_\ve(t, v_\ve), [\phi])_{L^2(M_\ve(\omega))}
    =\int \limits_{F_\ve(\omega) \cup G_\ve(\omega)}  \sigma_\ve(|\nabla u_\ve|) \nabla u_\ve \cdot \nabla \phi \, dx 	- \int \limits_{\Sigma} J_\ve^e \phi^e \, dS(x), 
\end{align}
where $\phi^e = \phi|_{G_\ve(\omega)}$, $\phi|_{S_0\cup S_L}=0$, and the function $u_\ve \in H^1(F_\ve(\omega) \cup G_\ve(\omega))$ in \eqref{def:L_eps}, for a given jump $[u_\ve]=v_\ve$, solves the following problem: 
\begin{align}
		\label{eq:stat-prob-0}
		&\di{\sigma_\ve(|\nabla u_\ve|) \nabla u_\ve}  =0 \,\,&\mbox{in}\,\, &F_\ve(\omega) \cup G_\ve(\omega), \nonumber\\
		&\sigma^e(|\nabla u_\ve^e|)\nabla u^e_\ve\cdot \nu  = \sigma^i(|\nabla u_\ve^i|) \nabla u_\ve^{i}\cdot \nu\,\,&\mbox{on}\,\, &M_\ve(\omega), \nonumber\\
		&v_\ve=u^{i}_\ve - u^e_\ve\,\,&\mbox{on}\,\, &M_\ve(\omega),
		\\
		&\sigma^e(|\nabla u_\ve^e|)  \nabla u_\ve^e\cdot \nu   = J_\ve^e(t, x)\,\,&\mbox{on}\,\, &\Sigma, \nonumber \\
		&u_\ve   = 0\,\,&\mbox{on}\,\, &S_{0} \cup S_L. \nonumber
\end{align}

\section{Effective problem and main result}
\label{sec:result}
To introduce the effective conductivity of the extracellular medium, we will use the notion of potential and solenoidal vector fields \cite{jikov2012homogenization}. 
{Recall that a vector $v\in \bigl(L^2(\Omega, {\mathcal P})\bigr)^2$ defined on $\Omega$ is called potential if almost all its realizations $v(T_{x'}\omega)$ are potential in $\mathbb{R}^2$, that is for almost all $\omega$, $v(T_{x'}\omega)$ admits the representation $v(T_{x'} \omega)=\nabla_{x'} \phi(x', \omega)$, $\phi(\,\cdot\,, \omega) \in H_{\rm loc}^1(\mathbb R^2)$, and $\int_\Omega v(\omega)\, d{\mathcal P}(\omega)=0$. An equivalent definition of the potential vectors can be found in Lemma 2.3, \cite{bourgeat1994stochastic}. The vector $v\in \bigl(L^2(\Omega, {\mathcal P})\bigr)^2$ is called solenoidal if almost all its realizations $v(T_{x'}\omega)$ are solenoidal in $\mathbb{R}^2$, i.e. 
		\begin{align*}
			\int_{\mathbb R^2} v(T_{x'} \omega) \cdot \nabla \phi\, dx' =0, \quad \phi\in C_0^\infty(\mathbb R^2). 
	\end{align*}
	We denote by $V^2_{\rm pot} (\Omega, {\mathcal P})$ the set of potential vectors with zero mean value, and by $L^2_{\rm sol} (\Omega, {\mathcal P})$ the set of all solenoidal vectors. Then the orthogonal Weyl decomposition holds (Lemma 7.3, \cite{zhikov1993averaging}):
$$
\bigl(L^2(\Omega, {\mathcal P})\bigr)^2=V^2_{\rm pot} (\Omega, {\mathcal P}) 	{\displaystyle \oplus } L^2_{\rm sol} (\Omega, {\mathcal P}).
$$
We define the effective extracellular conductivity $\sigma^e_{\rm hom}$  as follows
\begin{align}
\label{eff tensor}
\sigma^e_{\rm hom}(\xi)=\nabla\Phi(\xi), \quad \xi\in \mathbb{R}^3,
\end{align}
where 
\begin{align}
\label{pot}
  \Phi(\xi)= \inf_{w\in V^2_{\rm pot} (\Omega)} \,
		\int_{\Omega\setminus {\cal A}} Q^e(|\xi+(0,w)|) \, d{\cal P}(\omega), 
 \quad Q^e(\eta)=\int_0^\eta \sigma^e(\zeta)\zeta \, d\zeta.
	\end{align}
{The definition \eqref{eff tensor} can be equivalently written as 
\begin{align}
\label{know_how}
    \sigma^e_{\rm hom}(\xi)=\int_{\Omega \setminus \mathcal{A}} \sigma^e(|\xi + (0,w)|) (\xi + (0,w))d{\cal P}(\omega) 
\end{align}
with $w$ minimizing \eqref{pot}.
}    

We will approximate the intracellular potential $u_\ve^i$ by a function $u^i(t,x,\omega)$ such that almost all its realizations are constants on the axons (see problem \eqref{eq:hom-prob}). To this end, we will need the following class of functions:
\begin{align}
\label{def:K(A)}
{\cal K}({\cal A})= \big\{w\in L^2({\cal A}, {\mathcal P}) :\exists \tilde{w}\in H^1(\Omega, {\mathcal P}) \,\, \mbox{s.t.}\,\, \tilde{w}|_{\A }=w, \ \ \nabla_\omega \tilde{w}=0 \ \text{in}\  \A  \big\}.
\end{align}
For any $w\in {\cal K}({\cal A})$ it holds that $w\in L^2(\Omega, \mu)$ (see Lemma 6.2 in \cite{piatnitski2020homogenization}), and we can introduce the orthogonal projector ${\bf P}_{\cal K(\A )}$  on ${\cal K}({\cal A})$ in $L^2(\Omega,\mu)$. We denote by ${\cal K}_\mu({\cal A})$ the Hilbert space $L^2(\Omega, \mu)\cap{\cal K}({\cal A})=:{\cal K}_\mu({\cal A})$.  

\begin{remark}
For any function $w\in {\cal K}({\cal A})$, $\nabla_\omega \tilde w(T_{y'}\omega) = \nabla_{y'} \tilde w(T_{y'}\omega)=0$ for $y'\in A(\omega)$ and a.e. $\omega$. That is almost all realizations $w(T_{y'}\omega)$ are constants in $y'$ on the axons $A_j(\omega)$.  
\end{remark}

The lemma below shows that on functions $w\in \cal{K(A)}$ the $L^2(\A , {\mathcal P})$-norm is equivalent to the $L^2(\Omega, \mu)$-norm.
\begin{lemma} \label{lemma_radii}
Let $f\in L^1(\Omega)$ and $
f(T_{x'}\omega)=f_j(\omega) \,\, \mbox{is constant in}\,\, x' \,\, \mbox{in} \ \bar A_j(\omega), \ j=1, 2, \ldots$,
for a.a. $\omega\in \bar \Omega$ (with respect to both measures $d{\cal P}(\omega)$ and $d\mu (\omega))$.
Then 
\begin{align}
\label{eq:formulka}
\int\limits_{\A }f(\omega)d{\cal P}(\omega)=\frac{1}{2}\int\limits_{\Omega}r(\omega) f(\omega) d\mu (\omega).
\end{align}
\begin{proof}
It suffices to prove this lemma for 
$f\ge 0$ such that $f=0$ in $\Omega\setminus \bar \A $. For large enough $N>0$ consider the squares $q=\bigl[-N, N\bigr]^2$ and $q_{\bar r}=\bigl[-N-2\bar r, N+2\bar r\bigr]^2$ (see the assumption \ref{H3}).  
{Since $T_{x'}$ is measure preserving, 
the Campbell formula \eqref{Campbell} yields}
\begin{align}
&\int\limits_{\A } \int\limits_{\mathbb{R}^2} 
\chi_{q_{\bar r}}(x')f(\omega) dx'd{\cal P}(\omega)=
\int\limits_{\Omega}\int\limits_{\mathbb{R}^2}
    \chi_{\A }(T_{x'}\omega)\chi_{q_{\bar r}}(x')f(T_{x'}\omega) dx'd{\cal P}(\omega) \notag\\ 
&{=\int\limits_{\Omega}\int\limits_{A(\omega)\cap q_{\bar r}}
f(T_{x'}\omega) dx' d{\cal P}(\omega)} =
\int\limits_{\Omega}\sum\limits_{j} f_j(\omega)   |A_j(\omega)\cap q_{\bar r}|d{\cal P}(\omega)\notag\\ 
&\geq
\int\limits_{\Omega}\sum\limits_{j:A_j(\omega)\cap q \not=\emptyset }    f_j(\omega) \pi r^2_j(\omega)d{\cal P}(\omega)
\notag \\
&\geq \frac{1}{2}\int\limits_{\Omega}
   \int\limits_{\partial A(\omega)} \chi_{q}(x')f(T_{x'}\omega) r(T_{x'}\omega) ds(x') 
 \, d{\cal P}(\omega)\notag \\
&= \frac{1}{2}\int\limits_{\mathbb{R}^2}\int\limits_{\Omega}
     \chi_{q}(x')f(\omega) r(\omega) \, d\mu(\omega) \, dx'.
\label{23}
\end{align}
Calculating the integrals in $x'$ in both parts of the inequality \eqref{23}, we get
\begin{equation}
(2N+4\bar r)^2 \int\limits_{\Omega} f(\omega) d{\cal P}(\omega)
\geq \frac{(2N)^2}{2}\int\limits_{\Omega} f(\omega) r(\omega) d\mu(\omega).
\label{24}
\end{equation}
{Analogously, starting with $\chi_{q(x')}$ instead of $\chi_{q_{\bar r}(x')}$ in \eqref{23}, we obtain the estimate from above}
\begin{equation}
 (2N)^2 \int\limits_{\Omega} f(\omega) d{\cal P}(\omega)\le
 \frac{(2N+4\bar r)^2}{2}\int\limits_{\Omega} f(\omega) r(\omega) d\mu(\omega).
\label{25}
\end{equation}
Then, passing to the limit as $N\to\infty$ in \eqref{24}, \eqref{25} completes the proof.
\end{proof}
\end{lemma}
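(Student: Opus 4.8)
The plan is to exploit the elementary geometric identity that for a disc of radius $r$ the area equals $\tfrac12 r$ times the perimeter, i.e. $\pi r^2=\tfrac12\,r\cdot 2\pi r$. Indeed, by \eqref{def:cal-A} and the measure-preserving property of $T_{x'}$, the left-hand side $\int_{\mathcal A} f\,d{\mathcal P}$ is an area density of $f$ over the union of discs, while the right-hand side is a perimeter density computed against the Palm measure $\mu$; the weight $\tfrac12 r(\omega)$ is exactly the disc-by-disc factor converting perimeter into area. Since neither ${\mathcal P}$ nor $\mu$ can be localized to a single disc, I would establish the identity by averaging $f$ over a large square $q=[-N,N]^2$ and comparing the two densities, using a slightly enlarged square $q_{\bar r}=[-N-2\bar r,N+2\bar r]^2$ to absorb the discs that straddle the boundary of the averaging window; the buffer width $2\bar r$ is dictated by the uniform upper bound $\bar r$ on the radii from \ref{H3}.

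First I would reduce to $f\ge 0$ with $f=0$ on $\Omega\setminus\bar{\mathcal A}$, which is harmless by linearity and because both integrands already vanish off $\bar{\mathcal A}$. Using the measure-preserving property together with \eqref{def:cal-A} and the hypothesis that $f(T_{x'}\omega)=f_j(\omega)$ is constant on $\bar A_j(\omega)$, I would rewrite the averaged left-hand side as
\begin{align*}
|q_{\bar r}|\int_{\mathcal A} f\,d{\mathcal P}
=\int_\Omega\int_{A(\omega)\cap q_{\bar r}} f(T_{x'}\omega)\,dx'\,d{\mathcal P}(\omega)
=\int_\Omega\sum_j f_j(\omega)\,|A_j(\omega)\cap q_{\bar r}|\,d{\mathcal P}(\omega).
\end{align*}
The key point is that, thanks to the buffer, every disc meeting the inner square $q$ lies entirely inside $q_{\bar r}$, so $|A_j\cap q_{\bar r}|=\pi r_j^2$ for all such discs; discarding the remaining nonnegative terms yields a lower bound. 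I would then replace $\pi r_j^2$ by $\tfrac12\int_{\partial A_j} r(T_{x'}\omega)\,ds(x')$, since $r(T_{x'}\omega)=r_j$ on $\partial A_j$ by \eqref{r(omega)} and the perimeter is $2\pi r_j$, restrict the boundary integral by $\chi_q$, and apply the Campbell formula \eqref{Campbell}. This produces one of the two sandwiching inequalities, with $|q_{\bar r}|$ on the area side and $|q|$ on the Palm side. The reverse inequality is obtained symmetrically: start from the area integral over the inner square $q$, use $|A_j\cap q|\le\pi r_j^2$, and note that every disc meeting $q$ has its full boundary inside $q_{\bar r}$, so the perimeter integral is taken against $\chi_{q_{\bar r}}$ before Campbell's formula is applied.

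The main obstacle is precisely the bookkeeping of discs intersecting the boundary of the averaging window: such discs may contribute only partial area or partial perimeter, and the two densities need not agree on them. The two-square device resolves this by sandwiching at the cost of replacing $|q|$ by $|q_{\bar r}|$ on one side. Since $|q_{\bar r}|/|q|=\big((2N+4\bar r)/(2N)\big)^2\to 1$ as $N\to\infty$, both inequalities collapse to the asserted identity $\int_{\mathcal A} f\,d{\mathcal P}=\tfrac12\int_\Omega r f\,d\mu$ in the limit. It remains only to check that the disc sums are integrable on each bounded square, which follows from the lower bound $d_0$ on inter-disc distances \ref{H1} (uniformly bounding the number of discs meeting a bounded region) together with $f\in L^1(\Omega)$.
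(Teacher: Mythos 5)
Your proposal is correct and follows essentially the same route as the paper's own proof: the same reduction to $f\ge 0$ supported on $\bar{\mathcal A}$, the same two-square device $q=[-N,N]^2$ and $q_{\bar r}=[-N-2\bar r,N+2\bar r]^2$, the same conversion of disc areas $\pi r_j^2$ into weighted perimeter integrals $\tfrac12\int_{\partial A_j} r\,ds$ followed by Campbell's formula \eqref{Campbell}, and the same sandwich argument letting $N\to\infty$. The only addition is your explicit integrability remark via \ref{H1}, which the paper leaves implicit.
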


To formulate the limit problem \eqref{eq:hom-prob}, we introduce the functional spaces 
\begin{align}
&{\cal H}_e=\{ u^e\in H^{1}(G): u^e|_{S_0\cup S_L}=0 \}, \notag\\
&{\cal H}_i=\{ u^i\in L^2(G,{\cal K_\mu(\A )})\ : \partial_{x_1} u^i\in L^2(G,{\cal K_\mu(\A )}), \ u^i|_{S_0\cup S_L}=0 \},  \notag\\
&{\cal H}'_i=\{v=\partial^2_{x_1 x_1} u^i, u^i\in {\cal H}_i\},\notag
\end{align}
where ${\cal H}'_i$ is dual to ${\cal H}_i$.

Consider the non-local integro-differential operator
\begin{align}
{\mathcal L}_{\rm hom}(t, \cdot): {\cal H}_i \to \
{\cal H}'_i 
\end{align} 
given for $t\in[0,T]$ by
\begin{align}
\label{def:oper-Lhom}
{\mathcal L}_{\rm hom}(t, v)=-\frac{r(\omega)}{2}\partial_{x_1} 
\left( \sigma^i\bigl(|\partial_{x_1}(v+u^e)|\bigr) \partial_{x_1}(v+u^e)\right),
\end{align}
where $u^e\in {\cal H}_e$ solves
\begin{align}
\label{pr_u^e}
&\frac{1}{2}\int_\Omega\int_{G} r(\omega)\sigma^i\left(|\partial_{x_1} (v+u^e)|\right) \partial_{x_1} (v+u^e) \partial_{x_1} \phi^e \, dx\, d\mu(\omega) \notag \\
&
+\int_{G} \sigma_{\rm hom}^e(\nabla u^e)  \cdot \nabla \phi^e \, dx
- \int_{\Sigma} J_0^e \phi^e \, dS(x)=0, \quad \forall 
\phi^e\in {\cal H}_e.
\end{align}
It follows from the definition of ${\mathcal L}_{\rm hom}$ that
\begin{align}
		\label{def:L_hom}
		&({\mathcal L}_{\rm hom}(t, v), [\phi])_{L^2(G; L^2(\cal M, \mu))}= 
        \int_{G} \sigma_{\rm hom}^e(\nabla u^e)  \cdot \nabla \phi^e \, dx\\
        & +\frac{1}{2}\int_\Omega\int_{G} r(\omega)\sigma^i(|\partial_{x_1} u^i|) \partial_{x_1} u^i \partial_{x_1} \phi^i \, dx\, d\mu(\omega)
        - \int_{\Sigma} J_0^e \phi^e \, dS(x).\nonumber
\end{align}
holds for $u^e$ solving \eqref{pr_u^e}, $u^i=v+u^e $ and for any
$\phi^e\in {\cal H}_e, \phi^i \in {\cal H}_i$, $[\phi]=\phi^i-\phi^e$.

We will prove that the asymptotic behavior of solutions of {the boundary value problem} \eqref{eq:orig-prob} is described by the following limit  model: 
\begin{align}
\label{eq:hom-prob}
& c_m\partial_t v_0 
+{\mathcal L}_{\rm hom}(t, v_0)+ {\bf P}_{\cal K(\A )} I_{\rm ion}(v_0, g_0)=0
&\mbox{in}\,\,  &G_T\times \cal M,\nonumber\\
&\partial_t g_{0}  = \theta v_{0}+a - b\, g_{0} \,\,&\mbox{in}\,\, &G_T\times \cal M, \\
&v_{0}(0,x) =V_0(x), \,\, g_{0}(0,x,\omega)=G_0(x,\omega)\,\,&\mbox{in}\,\,  &G\times \cal M,\nonumber
\end{align}
with $u_{0}^{e}=u_{0}^{e}(t,x)$, $u_{0}^{i}=u_{0}^{i}(t,x,\omega)$ such that $u_{0}^{i}(t,x,\cdot):{\cal M} \to {\cal K(\A )}$. Here with a slight abuse of notation, we set
${\bf P}_{\cal K(\A )} I_{\rm ion}(v_0, g_0)=\frac{v^3_{0}}{3} - v_{0} - {\bf P}_{\cal K(\A )}g_{0}$.
The well-posedness of the limit problem \eqref{eq:hom-prob} will be shown in Section \ref{sec:existence-eff-prob}.

{The main result of the paper describing the convergence of solutions of \eqref{eq:orig-prob} to those of the macroscopic problem \eqref{eq:hom-prob} is contained in the following theorem.}
\begin{theorem}
		\label{th:main-short} 
		Under assumptions  \ref{H1}--\ref{H8}, for $t\in [0,T]$, the solution $\{v_\ve, g_\ve\}$ of the microscopic problem \eqref{eq:orig-prob} converges strongly stochastically two-scale in the mean {on the random surfaces $M_\ve(\omega)$} to the solution $\{v_{0}$, $g_{0}\}$ of the macroscopic problem \eqref{eq:hom-prob}, as $\ve\to 0$. Moreover, the potential $u_\ve$ converges stochastically two-scale in the mean in $G_T$  (see Definition \ref{def:2scale-G}) to 
  \begin{align*}
  u_0(t,x,\omega)=\chi_{\Omega\setminus {\bar\A }} (\omega)\, u^e_0 (t,x) \, + \chi_{\bar \A } (\omega)\, u^i_0(t,x,\omega),
  \end{align*}
   where  $u^e_0$ is a solution of the problem \eqref{pr_u^e} with $v=v_0$, $u^i_0=v_0+u_0^e$.
\end{theorem}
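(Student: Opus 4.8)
The plan is to follow the standard route for nonlinear stochastic homogenization: derive uniform a priori estimates, extract stochastic two-scale limits by compactness, identify the two-scale structure of the limiting fields, pass to the limit in the weak formulation using the monotonicity method of Minty together with the Palm-measure calculus on the membranes, and finally invoke uniqueness of the well-posed limit problem \eqref{eq:hom-prob} (Section \ref{sec:existence-eff-prob}) to promote convergence of subsequences to convergence of the whole family. Throughout, the membrane surface integrals on $M_\ve(\omega)$ are converted to volume integrals over $\Omega$ against the Palm measure via Campbell's formula \eqref{Campbell} and Lemma \ref{eq:conv-test-func-surfaces}.

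First I would establish the energy estimates. Testing the weak form of \eqref{eq:orig-prob} (equivalently the nonlocal formulation \eqref{eq:orig-prob-nonlocal}) with $u_\ve$ whose jump is $v_\ve$, and using coercivity from \ref{H5} ($\sigma^{e,i}\ge\underline{\sigma}>0$), controls $\|\nabla u_\ve\|_{L^2(F_\ve(\omega)\cup G_\ve(\omega))}$, with the lateral flux handled by \ref{H8}; the cubic structure of $I_{\rm ion}$ supplies the coercive term $\ve\int_{M_\ve} v_\ve^4$, yielding uniform bounds on $\ve^{1/2}v_\ve$ in $L^\infty_t L^2(M_\ve)$ and $L^4$, and on $g_\ve$ through its ODE. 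The factor $\ve$ in \eqref{eq:orig-prob} is precisely what compensates the $\ve^{-1}$ growth of the membrane surface measure, so all these quantities stay $O(1)$. Differentiating in time and using the bound on $\partial_t J_\ve^e$ in \ref{H8} together with \ref{H6}--\ref{H7} gives uniform control of $\partial_t v_\ve$, which I will need for compactness in the cubic term.

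Next, from these bounds I extract (along a subsequence) the two-scale limits $u_\ve \xrightharpoonup[]{2s} u_0$ and a two-scale limit of $\nabla u_\ve$ whose structure I identify via the Weyl decomposition: in the extracellular part it has the form $\nabla_x u_0^e+(0,w)$ with $w\in V^2_{\rm pot}(\Omega)$ and $u_0^e$ independent of $\omega$ (by \ref{H6} and Theorem~3.7 in~\cite{bourgeat1994stochastic}), whereas in the intracellular part the transversal disconnectedness of the axons forces the transversal corrector to vanish, so only $\partial_{x_1}u_0^i$ survives and $u_0^i(t,x,\cdot)\in\mathcal{K}(\mathcal{A})$. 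On the membranes I get $v_\ve \xrightharpoonup[]{2s} v_0$ and $g_\ve$ converging in the senses of Definitions \ref{def_2s_on_ax_t}--\ref{def_2s_on_ax}, with $v_0=u_0^i-u_0^e$. Passing to the limit in the bulk flux uses monotonicity: by Lemma \ref{lm:monotonicity} the map $\xi\mapsto\sigma^{e,i}(|\xi|)\xi$ is monotone, so I form the nonnegative quantity $\int\bigl(\sigma_\ve(|\nabla u_\ve|)\nabla u_\ve-\sigma_\ve(|\eta_\ve|)\eta_\ve\bigr)\cdot(\nabla u_\ve-\eta_\ve)$ for admissible two-scale test fields $\eta_\ve$, take the two-scale limit, and run Minty's argument to identify the limiting extracellular flux with $\sigma^e_{\rm hom}(\nabla u_0^e)$ (through \eqref{pot}, \eqref{know_how} and problem \eqref{pr_u^e}) and the intracellular one with $\sigma^i(|\partial_{x_1}u_0^i|)\partial_{x_1}u_0^i$. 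The geometric weight $r(\omega)/2$ in \eqref{def:oper-Lhom} and \eqref{def:L_hom} arises from Lemma \ref{lemma_radii}, which rewrites a $d\mathcal{P}$-integral of a quantity constant on cross-sections as a $\tfrac{1}{2}r\,d\mu$-integral, and restricting the intracellular test functions to $\mathcal{K}(\mathcal{A})$ produces the projector $\mathbf{P}_{\mathcal{K}(\mathcal{A})}$ acting on $I_{\rm ion}$ in \eqref{eq:hom-prob}.

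The hard part will be the passage to the limit in the cubic ionic term $v_\ve^3$, since cubing is not weakly continuous: weak two-scale convergence of $v_\ve$ does not suffice, and I must upgrade it to strong stochastic two-scale convergence on the random surfaces in the sense of Definition \ref{def:strong-conv-surface}. I would obtain this by combining the uniform bound on $\partial_t v_\ve$ with an Aubin--Lions-type compactness argument on the membranes to get strong convergence in time, and then verify Definition \ref{def:strong-conv-surface} through convergence of the membrane energy norms, i.e. $\limsup_\ve\,\ve\int_{M_\ve}|v_\ve|^p\,dS=\int|v_0|^p\,dx\,d\mu$, which follows from the monotonicity (in)equality together with lower semicontinuity. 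This is delicate precisely because it couples the monotone bulk operator, the surface dynamics, and the Palm-measure calculus at once, and it is the step that both enables the limit of the cubic term and delivers the strong-convergence conclusion of the theorem. Collecting the extracellular identity \eqref{pr_u^e}, the longitudinal intracellular equation, and the limiting membrane ODE shows that every subsequential limit $\{v_0,g_0\}$ solves \eqref{eq:hom-prob}; well-posedness then forces uniqueness of the limit and hence convergence of the entire family, completing the proof.
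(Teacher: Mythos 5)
Your overall skeleton (a priori bounds, extension operators, two-scale compactness, Palm-measure calculus, Minty's method, uniqueness of the well-posed limit problem) matches the paper, but there is a genuine gap at exactly the step you flag as ``the hard part'': the cubic ionic term. You propose to upgrade $v_\ve$ to \emph{strong} stochastic two-scale convergence on $M_\ve(\omega)$ \emph{before} passing to the limit, via an Aubin--Lions-type compactness argument on the membranes. No such argument is available in the two-scale-in-the-mean framework: all norms involve integration over $\Omega$, and a bound on $\partial_t v_\ve$ gives no compactness in the $(x',\omega)$ variables --- there is no compact embedding in $L^2(\Omega,{\mathcal P})$ or $L^2(\Omega,\mu)$, and $v_\ve$ genuinely oscillates at scale $\ve$ (its limit $v_0$ depends on $\omega$, since axons of different radii carry different potentials), so no fixed-space strong compactness holds either. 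Your fallback, verifying Definition \ref{def:strong-conv-surface} ``from the monotonicity (in)equality together with lower semicontinuity,'' is circular: that inequality yields norm convergence only \emph{after} the limit problem has been identified, whereas you need the strong convergence \emph{in order to} identify the limit of $v_\ve^3$.

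The paper resolves this differently, and the device you are missing is the exponential factorization $v_\ve=e^{\lambda t}\hat v_\ve$, $g_\ve=e^{\lambda t}\hat g_\ve$ with $\lambda$ chosen as in \eqref{lambda}. This makes the \emph{whole} coupled system --- bulk fluxes, ionic current \eqref{def:I_ion_hat}, and recovery ODE --- monotone (inequality \eqref{72}); note that without it $I_{\rm ion}(v,g)=v^3/3-v-g$ is not monotone, so your plan of applying Minty only to the bulk conductivity and treating the membrane dynamics by compactness cannot lean on monotonicity there. With the factorization, Minty's method is applied to everything at once: in \eqref{almost_ready_forlimit} the cubic nonlinearity appears only as $\hat I_{\rm ion}([\phi_\ve],\psi_\ve)$, i.e.\ evaluated at test functions that converge strongly two-scale by construction, while $\hat v_\ve$ and $\hat g_\ve$ enter linearly (apart from the quadratic terms handled by lower semicontinuity \eqref{semi_v}--\eqref{semi_g}); weak two-scale convergence therefore suffices, and the strong convergence asserted in the theorem is a \emph{consequence} of the limit identification, not an ingredient of it. A secondary, non-fatal difference: where you take arbitrary admissible two-scale test fields $\eta_\ve$ in the Minty inequality, the paper constructs extracellular test functions $\phi^e_\ve$ solving the auxiliary $\ve$-problems \eqref{def phi^e_eq}--\eqref{def phi^e_bc2} with $\hat\sigma^e_{\rm hom}(\nabla\phi^e_0)$ as data (Lemma \ref{lm:conv phi^e}), and intracellular ones of the form \eqref{def phi^i}; this is what makes the effective conductivity \eqref{eff tensor}--\eqref{pot} and the geometric weight $r(\omega)/2$ emerge directly, without a separate step of minimizing out the corrector.
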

Note  that the limit problem \eqref{eq:hom-prob} can be formulated in the following form:
\begin{align*}
	\label{eq:hom-prob-system}
	& c_m\partial_t v_{0} + {\bf P}_{\cal K(\A )} I_{ion}(v_{0}, g_{0})  = \frac{r(\omega)}{2} \partial_{x_1}( \sigma^i(| \partial_{x_1} u^{i}_0|))  & &\mbox{in}\, G\times {\mathcal M},\nonumber\\
	& \int \limits_\Omega \left(c_m\partial_t v_{0} + {\bf P}_{\cal K(\A )} I_{ion}(v_{0}, g_{0})\right)\, d\mu(\omega)  = -\di{\sigma_{hom}^e(\nabla u_0^e)}  & &\mbox{in}\, G,\nonumber\\
	&\partial_t g_{0}  = \theta v_{0}+a - b\, g_{0}  & &\mbox{in}\, G\times {\mathcal M},\\
	&v_{0}=u_{0}^{i} - u_0^e & &\mbox{in}\, G\times {\mathcal M},\ \nonumber\\ 
	&u_0^e = u_{0}^{i}=0& &\mbox{on}\, S_0\cup S_L,\nonumber\\
	&\sigma_{hom}^e(\nabla u_0^{e}) \cdot \nu = J_0^e &&\mbox{on}\,  \Sigma,\nonumber\\
	&v_{0}(0,x) =V_0(x), \,\, g_{0}(0,x,\omega)=G_0(x,\omega)  
	\hskip -0.5cm & &\mbox{in}\, G\times {\mathcal M }.\nonumber
\end{align*}


{ 
\begin{remark} To discuss the essence  
of the effective bidomain operator and asymptotics of potentials consider a simplified problem.
Disregard the recovery variable $g_\ve$ and 
consider a stationary counterpart of problem \eqref{eq:orig-prob} obtained 
by replacing 
$\ve (c_m\partial_t v_\ve+I_{ion}(v_\ve,g_\ve))$ by $\ve f(x)$ with a known function $f(x)$ (independent of $\ve$). 
Then, problems for the extracellular potential $u_\ve^e$ and intracellular potential $u_\ve^i$ decouple so that the asymptotic behavior of these potentials can be studied separately.
Indeed, 
$u^e_\ve$ and $u^i_\ve$ can be found via the 
minimization problems.  
\begin{equation}
\label{Min_ue}
\min_{u_\ve^e} 
\int_{G_\ve(\omega)} Q^e(|\nabla u_\ve^e|)dx
-\int_{\Sigma} J_\ve^e u_\ve^e dS
-\ve \int_{M_\ve(\omega)} f u_\ve^e dS, 
\end{equation}
\begin{equation}
\label{Min_ui}
\min_{u_\ve^i} 
\int_{F_\ve(\omega)} Q^i(|\nabla u_\ve^i|)dx
+\ve \int_{M_\ve(\omega)} f u_\ve^i dS,
\end{equation}
where 
$Q^\alpha(\eta)=\int_0^\eta \sigma^\alpha(\zeta)\zeta d\zeta$, $\alpha=i, e$ (cf. \eqref{pot}). Considering first $u_\ve^e$, we postulate the ansatz   
\begin{equation}
\label{ansatz}
u_\ve^e=u_0^e(x)
+\ve u_1^e(x,\omega, x^\prime/\ve)+o(\ve),
\end{equation}
assuming that  $w(x, T_{y'}\omega)=\nabla_{y^\prime} u_1^e(x,\omega, y^\prime)$ is a stationary random vector field. 
Then substituting the ansatz \eqref{ansatz} 
 in \eqref{Min_ue}, one can formally derive the effective conductivity 
\eqref{eff tensor}--\eqref{pot}. 
Indeed, since 
$\nabla u_\ve^e(x)\simeq\nabla u_0^e(x)+(0, w(x, T_{x^\prime/\ve} \omega))$ we get,
\begin{align}
\label{how-to}
\int_\Omega \int_{G_\ve(\omega)} Q^e(|\nabla u_\ve^e|)dx d{\cal P}(\omega)
\to 
\int_G \int_{\Omega\setminus \mathcal{A}}  Q^e(|\nabla u_0^e(x)+(0,w)|) d{\cal P}(\omega) dx.
\end{align}
Passing to the limit in other terms, then minimizing in $w$ we get 
\begin{align*}
    &\inf_{u_0^e} \int_G \Big[\inf_{w \in V_{pot}} \int_{\Omega\setminus \mathcal A} Q^e(|\xi + w|)d\mathcal{P}(\omega) \Big|_{\xi=\nabla u_0^e}- \int_{\Sigma} J_0^e u_0^e dS- \int_\Omega f u_0^e d\mu \Big] dx\\
    &= \inf_{u_0^e} \int_G \Big[ \Phi(\nabla u_0^e)- \int_{\Sigma} J_0^e u_0^e dS  - \int_\Omega f u_0^e d\mu \Big]dx.
\end{align*}
Finally, taking the variation in $u_0^e$ we arrive at the homogenized  
equation for the limit function $u_0^e$:  $-{\rm div}\,\sigma^e_{\rm hom}(\nabla u_0^e)=\int_\Omega f(x) d\mu(\omega)$. 



Since axons form disconnected structure of
thin cylinders, the intracellular potential is approximated (on the fine scale) by a random stationary function constant in cross sections of axons:    
\begin{equation}
    u_\varepsilon^i=u_0^i(x_1,\ve x_j^\prime(\omega),T_{x^\prime/\ve}\omega)+o(\ve),
\label{razlozhenieui}
\end{equation}
where $x_j'(\omega)$ are the coordinates of the axons' center, and $u_0^i(x_1,x^\prime,\,\cdot\,)\in \mathcal{K}(\mathcal{A})$. Then one can show,
using the Campbell formula, that 
\begin{equation}
\int_\Omega \int_{F_\ve(\omega)} Q^i(|\nabla u_\ve^i|)dx d{\cal P}(\omega)
\to \int_\Omega \int_{G}\frac{r(\omega)}{2} Q^i(|(\partial_{x_1} u_0^i|)dx d{\mu}(\omega).
\end{equation}
Passing to the limit in the second term of \eqref{Min_ui} and then taking variations in $u_0^i$ 
we formally arrive in the stochastic part of the bidomain operator,  
$\frac{r(\omega)}{2} 
\partial_{x_1}( \sigma^i(| \partial_{x_1} u^{i}_0|))=f(x)
$.

The formal computations above are used for the construction of test functions \eqref{def phi^e_eq}--\eqref{def phi^e_bc2}. While passing to the limit, we combine the method of  monotone operators combined with the convergence of the test functions (see Lemma \ref{lm:conv phi^e}).
\end{remark}
}


\begin{remark}
{As mentioned in the introduction, in the numerical study \cite{mandonnet2011role}, the authors modeled an axonal fasicle by means of a linear bidomain model. The aim of the study
was to provide a computational model to estimate the
regions in which electrical stimulation would lead to a membrane potential exceeding a certain threshold and, in this way, generate a propagating signal. Using the macroscopic multidomain type model \eqref{eq:hom-prob} would allow us to account for axons of different radii without discretizing the original three-dimensional geometry of the bundle.}
\end{remark}

\section{Proof of Theorem \ref{th:main-short}}
\label{sec:proof}
{The proof of Theorem \ref{th:main-short} is organized as follows. In Section \ref{sec:micro} we derive a priori estimates (Lemma \ref{lm:apriori-est}), construct an extension operator from the extracellular domain $G_\ve(\omega)$ to the whole bundle $G$ (Lemma \ref{lm:extension-u^e}), as well as construct a quasi-extension of $u_\ve^i$ in $G$. In Section \ref{sec:pass-to-lim}, we formulate the key compactness results to extract converging subsequences for the membrane potential, the intra- and extracellular potentials, and their derivatives. Lemma \ref{lm:conv_u_ds} is another important result of the section, stating the convergence of the traces on the lateral boundary of the axons. Finally, we pass to the limit using the Minty method for monotone operators. Note that a special choice of test functions in the extracellular domain (see problem \eqref{def phi^e_eq}--\eqref{def phi^e_bc2} and Lemma \ref{lm:conv phi^e}) leads to the appearance of the effective conductivity. In Section \ref{sec:examples_random_geometry} we present several random geometries satisfying assumptions \ref{H1}--\ref{H4}.}

\subsection{Microscopic problem}
\label{sec:micro}
We start with writing down a weak formulation of the microscopic problem. For brevity, we skip the argument $\omega$ in $v_\ve(t,x), g_\ve(t,x)$. In order to transform the problem to a one with a monotone operator, we introduce a parameter $\lambda>0$, which will be chosen later, and make the following factorization
\begin{align*}
v_\ve = e^{\lambda t}\hat v_\ve, \quad 
g_\ve = e^{\lambda t} \hat g_\ve,\quad
{\hat{\sigma}_\ve({t, x, \omega,} \eta)}= \begin{cases}
    \hat \sigma^e(t, \eta)=\sigma^e(e^{\lambda t}\eta) \,\, \mbox{in} \,\,G_\ve(\omega)\\ \hat \sigma^i(t, \eta)=\sigma^i(e^{\lambda t}\eta) \,\, \mbox{in}\,\, F_\ve(\omega).
    \end{cases}
\end{align*}
We define the operator $\hat{{\mathcal L}}(\hat{v}_\ve)=\hat{{\mathcal L}}_\ve(t,\omega; \hat v_\ve)$ as follows. For any $\phi\in H^1(F_\ve(\omega) \cup G_\ve(\omega))$, $\phi|_{S_0\cup S_L}=0$, we set
	\begin{align}
		\label{def:hat_L_eps}
		&(\hat{\mathcal L}_\ve(\hat v_\ve), [\phi])_{L^2(M_\ve(\omega))} \nonumber\\ 
		&=\int \limits_{F_\ve(\omega) \cup G_\ve(\omega)}  \hat\sigma_\ve(t, x, \omega, |\nabla \hat u_\ve|) \nabla\hat u_\ve \cdot \nabla \phi \, dx 	- \int \limits_{\Sigma} e^{-\lambda t}J_\ve^e \phi^e \, dS(x), 
	\end{align}
	where $\phi^e = \phi|_{G_\ve(\omega)}$, $\phi|_{S_0\cup S_L}=0$, and the function $\hat u_\ve \in H^1(F_\ve(\omega) \cup G_\ve(\omega))$ in \eqref{def:hat_L_eps}, for a given jump $[\hat u_\ve]=\hat v_\ve$, solves the following problem: 
	\begin{align}
		\label{eq:hatprob}
		&\di{\hat \sigma_\ve(t, x, \omega, |\nabla\hat  u_\ve|) \nabla \hat u_\ve}  =0 \,\,&\mbox{in}\,\, &F_\ve(\omega) \cup G_\ve(\omega), \nonumber\\
		&\hat \sigma^e(t,|\nabla \hat  u_\ve^e|)\nabla\hat  u^e_\ve\cdot \nu  = \hat \sigma^i(t,|\nabla \hat  u_\ve^i|) \nabla \hat  u_\ve^{i}\cdot \nu\,\,&\mbox{on}\,\, &M_\ve(\omega), \nonumber\\
		&\hat v_\ve=\hat u^{i}_\ve -\hat  u^e_\ve\,\,&\mbox{on}\,\, &M_\ve(\omega),
		\\
		&\hat \sigma^e(t,|\nabla \hat u_\ve^e|)  \nabla \hat u_\ve^e\cdot \nu   =  e^{-\lambda t}J_\ve^e(t, x)\,\,&\mbox{on}\,\, &\Sigma, \nonumber \\
		&\hat u_\ve   = 0\,\,&\mbox{on}\,\, &S_{0} \cup S_L. \nonumber
	\end{align}
The original system \eqref{eq:orig-prob-nonlocal} transforms into
\begin{align}
\label{eq:orig-prob-nonlocal-hat}
&\ve c_m \partial_t \hat{v}_\ve + \hat{{\mathcal L}}_\ve( \hat{v}_\ve) + \ve \hat{I}_{\rm ion}(\hat{v}_\ve, \hat{g}_\ve)=0 \,\,&\mbox{on}\,\, &(0,T)\times M_\ve(\omega), \nonumber\\
&\partial_t \hat{g}_\ve  = \theta \hat v_{\ve} + ae^{-\lambda t} - (b+\lambda)\hat g_{\ve}\,\,& \mbox{on}\,\, & (0,T)\times M_\ve(\omega),\\
&\hat{v}_\ve(0,x, \omega) =V_\ve(x,\omega), \,\,\hat{g}_\ve(0,x,\omega)=G_\ve(x, \omega)\,\,& \mbox{on}\,\, & M_\ve(\omega),\nonumber
\end{align}
where
\begin{align}
\label{def:I_ion_hat}
	\hat I_{\rm ion}(\hat v_{\ve}, \hat g_{\ve}) =\frac{ e^{2\lambda t}}{3} \hat v^3_{\ve} +  \left(c_m\lambda -1\right)\hat v_\ve - \hat g_\ve.
\end{align}
Choosing $\lambda$ such that 
				\begin{align}
					\label{lambda}
					\lambda \ge \frac{1}{2} (\theta+3)\quad \mbox{and} \quad \lambda \ge \frac{1}{2} (\theta+1) -b
				\end{align} 
and using Lemma \ref{lm:monotonicity}, we ensure that the problem is monotone.
{The proof of the monotonicity and well-posedness of problem \eqref{eq:orig-prob-nonlocal-hat}, for a fixed $\omega$, follows the lines of the proof in the deterministic case (see  \cite{JEREZHANCKES2023103789}). 
We refer, namely, to Theorem 1.4 in \cite{lions1969quelques} and Remark 1.8 in Chapter 2 (see also Theorem 4.1 in \cite{showalter2013monotone}).}
	
	\begin{lemma}[A priori estimates]
		\label{lm:apriori-est}
		Assume hypotheses \ref{H5}--\ref{H8} hold. A solution  $W_\ve=(\hat v_\ve, \hat g_\ve)^T$ of problem \eqref{eq:orig-prob-nonlocal-hat},  for any $t \in [0,T]$, satisfies the following estimates:
		\begin{align}
			\label{apr_est_v}
			&\ve \int\limits_{M_\ve(\omega)}\ |\hat v_\ve|^4 \, dS(x)
			+ \ve \int\limits_0^T \int\limits_{M_\ve(\omega)} |\partial_t \hat v_\ve|^2 \, dS(x) \, dt\le C,\\
			\label{apr_est_g}
			&\ve \int\limits_{M_\ve(\omega)} |\hat g_\ve|^2 \, dS(x)
			+ \ve \int\limits_0^T \int\limits_{M_\ve(\omega)} |\partial_t \hat g_\ve|^2 \, dS(x) \, dt\le C,
		\end{align}
		with a constant $C$ independent of $\ve$ and t, but depending on $T$, and the norms of initial functions $\|G_\ve\|_{L^2(M_\ve(\omega))}$, $\|V_\ve\|_{H^1(G)}$.
\end{lemma}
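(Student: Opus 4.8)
The plan is to derive \eqref{apr_est_v}--\eqref{apr_est_g} from an energy estimate obtained by testing the first equation of \eqref{eq:orig-prob-nonlocal-hat} with the time derivative $\partial_t\hat v_\ve$, coupled with a direct $L^2$ estimate for $\hat g_\ve$ read off from its linear ODE, and to close the system by a Gronwall inequality for a combined energy functional. Formally, testing with $\partial_t\hat v_\ve$ gives
\begin{align*}
\ve c_m\int_{M_\ve(\omega)}|\partial_t\hat v_\ve|^2\,dS
+\bigl(\hat{\mathcal L}_\ve(\hat v_\ve),\partial_t\hat v_\ve\bigr)_{L^2(M_\ve(\omega))}
+\ve\int_{M_\ve(\omega)}\hat I_{\rm ion}(\hat v_\ve,\hat g_\ve)\,\partial_t\hat v_\ve\,dS=0,
\end{align*}
so that the $\partial_t\hat v_\ve$ dissipation in \eqref{apr_est_v} is produced by the first term once the remaining two are shown to be, up to sign, time derivatives of controlled quantities plus Gronwall-admissible remainders. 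The rescaling and the choice \eqref{lambda} enter only through constants.

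For the operator term I exploit that, by \ref{H5} and Lemma \ref{lm:monotonicity}, the map $\hat v_\ve\mapsto\hat u_\ve$ in \eqref{eq:hatprob} is the Euler--Lagrange solution of the strictly convex functional $\hat u\mapsto\int_{F_\ve(\omega)\cup G_\ve(\omega)}Q_t(|\nabla\hat u|)\,dx-\int_\Sigma e^{-\lambda t}J_\ve^e\hat u^e\,dS$, where $Q_t=Q^i_t$ in $F_\ve(\omega)$, $Q_t=Q^e_t$ in $G_\ve(\omega)$, with $Q^\alpha_t(\eta)=\int_0^\eta\hat\sigma^\alpha(t,\zeta)\zeta\,d\zeta$. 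Differentiating the state $\hat u_\ve$ in time, one obtains
\begin{align*}
\bigl(\hat{\mathcal L}_\ve(\hat v_\ve),\partial_t\hat v_\ve\bigr)_{L^2(M_\ve(\omega))}
=\frac{d}{dt}\Bigl(\int_{F_\ve(\omega)\cup G_\ve(\omega)}Q_t(|\nabla\hat u_\ve|)\,dx-\int_\Sigma e^{-\lambda t}J_\ve^e\hat u^e_\ve\,dS\Bigr)+R_\ve(t),
\end{align*}
where $R_\ve$ gathers the explicit $t$-derivatives of $\hat\sigma$ and of $e^{-\lambda t}J_\ve^e$. The first is harmless: the identity $Q^\alpha_t(\eta)=e^{-2\lambda t}Q^\alpha(e^{\lambda t}\eta)$ yields $\partial_tQ^\alpha_t(\eta)=-2\lambda Q^\alpha_t(\eta)+\lambda\sigma^\alpha(e^{\lambda t}\eta)\eta^2$, which by \ref{H5} is bounded by $C|\nabla\hat u_\ve|^2$ and hence by the Dirichlet energy itself; the second is controlled by \ref{H8} (the $L^2$ bound on $\partial_tJ_\ve^e$) together with a trace inequality for $\hat u^e_\ve$ and Young's inequality.

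The ionic term is the source of the $L^4$ bound: since $\hat I_{\rm ion}$ is given by \eqref{def:I_ion_hat}, the cubic part contributes $\ve\tfrac{e^{2\lambda t}}{12}\frac{d}{dt}\int_{M_\ve(\omega)}\hat v_\ve^4\,dS$, the linear part a multiple of $\frac{d}{dt}\int\hat v_\ve^2$, and the coupling term $-\ve\int\hat g_\ve\partial_t\hat v_\ve$, which I split by Young's inequality so that half of the $\partial_t\hat v_\ve$ dissipation is absorbed on the left and a term $C\ve\int\hat g_\ve^2$ remains. For $\hat g_\ve$ I multiply the second equation of \eqref{eq:orig-prob-nonlocal-hat} by $\ve\hat g_\ve$ and integrate, obtaining $\tfrac{\ve}{2}\frac{d}{dt}\int\hat g_\ve^2\le C\ve\int\hat v_\ve^2+C\ve\int\hat g_\ve^2+C\ve$, where $\ve\int\hat v_\ve^2$ is controlled by $(\ve\int\hat v_\ve^4)^{1/2}$ because $\ve|M_\ve(\omega)|\le C$. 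Collecting these into the functional
\begin{align*}
Y_\ve(t)=\int_{F_\ve(\omega)\cup G_\ve(\omega)}Q_t(|\nabla\hat u_\ve|)\,dx
+\ve\int_{M_\ve(\omega)}\hat v_\ve^4\,dS+\ve\int_{M_\ve(\omega)}\hat g_\ve^2\,dS,
\end{align*}
I arrive at $\frac{d}{dt}Y_\ve+\ve c_m\int|\partial_t\hat v_\ve|^2\,dS\le CY_\ve+C$. The initial value $Y_\ve(0)$ is bounded uniformly in $\ve$ by \ref{H6}--\ref{H7}, the elliptic estimate for \eqref{eq:hatprob}, and the bound $\ve\int_{M_\ve(\omega)}V_\ve^4\le C$ from the remark following \ref{H8}. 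Gronwall then yields uniform bounds on $\sup_tY_\ve$ and on $\ve\int_0^T\int|\partial_t\hat v_\ve|^2$, which gives \eqref{apr_est_v}; estimate \eqref{apr_est_g} follows since $\ve\int\hat g_\ve^2\le\sup_tY_\ve$ and, squaring the $\hat g_\ve$-equation pointwise, $\ve\int_0^T\int|\partial_t\hat g_\ve|^2\le C\ve\int_0^T\int(\hat v_\ve^2+\hat g_\ve^2+1)\le C$.

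The main obstacle is the rigorous justification of the energy identity for the nonlocal, nonlinear operator $\hat{\mathcal L}_\ve$: one must show that $t\mapsto\hat u_\ve$ is differentiable with $\partial_t\hat u_\ve$ solving the linearized elliptic problem with jump $\partial_t\hat v_\ve$, so that $(\hat{\mathcal L}_\ve(\hat v_\ve),\partial_t\hat v_\ve)$ indeed equals the time derivative of the Dirichlet energy up to the controllable remainder $R_\ve$. This is where the strict monotonicity from \ref{H5} (Lemma \ref{lm:monotonicity}) is essential, and where, following the deterministic treatment in \cite{JEREZHANCKES2023103789}, one works with difference quotients in $t$ and passes to the limit. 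A secondary difficulty is the circular coupling between $\hat v_\ve$ and $\hat g_\ve$, resolved by treating them simultaneously in the single Gronwall functional $Y_\ve$.
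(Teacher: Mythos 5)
For this lemma the paper offers no argument of its own: it states only that \eqref{apr_est_v}, \eqref{apr_est_g} and \eqref{apr_est_u} ``one can obtain in the same way as in \cite{JEREZHANCKES2023103789}'', i.e.\ it outsources the proof to the deterministic periodic case. Your proposal --- testing the $\hat v_\ve$-equation with $\partial_t\hat v_\ve$, interpreting $(\hat{\mathcal L}_\ve(\hat v_\ve),\partial_t\hat v_\ve)$ through the convex potential behind \eqref{eq:hatprob}, extracting the $L^4$ bound from the cubic ionic term, reading the $\hat g_\ve$ bound off its linear ODE, and closing with Gronwall --- is precisely the type of energy argument that reference supplies, so in substance you are reconstructing the omitted proof along the intended lines; your handling of the remainder $R_\ve$ (via $Q_t^\alpha(\eta)=e^{-2\lambda t}Q^\alpha(e^{\lambda t}\eta)$ and \ref{H8}) and of the initial energy via \ref{H6}--\ref{H7} is correct.

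There is, however, one step that fails as literally written: the choice of Gronwall functional. Your energy identity correctly produces the time derivative of the \emph{full} functional $E_t(\hat u_\ve)=\int_{F_\ve(\omega)\cup G_\ve(\omega)} Q_t(|\nabla\hat u_\ve|)\,dx-\int_\Sigma e^{-\lambda t}J_\ve^e\hat u_\ve^e\,dS$, but your $Y_\ve$ retains only the Dirichlet part. Passing from $\frac{d}{dt}E_t$ to $\frac{d}{dt}\int Q_t\,dx$ forces $\frac{d}{dt}\int_\Sigma e^{-\lambda t}J_\ve^e\hat u_\ve^e\,dS$ onto the right-hand side, and this contains $\int_\Sigma e^{-\lambda t}J_\ve^e\,\partial_t\hat u_\ve^e\,dS$: the trace of $\partial_t\hat u_\ve^e$ on $\Sigma$ is controlled neither by $Y_\ve$ nor by the dissipation $\ve\int_{M_\ve(\omega)}|\partial_t\hat v_\ve|^2\,dS$ (an elliptic estimate for the linearized problem would require $\|\partial_t\hat v_\ve\|_{H^{1/2}(M_\ve(\omega))}$, which you do not have). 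The repair is standard and stays inside your scheme: keep the boundary term in the functional, i.e.\ run Gronwall on $E_t(\hat u_\ve)+\ve\frac{e^{2\lambda t}}{12}\int_{M_\ve(\omega)}\hat v_\ve^4\,dS+\frac{\ve}{2}\int_{M_\ve(\omega)}\hat g_\ve^2\,dS+C_0$, where $C_0$ is fixed by coercivity: $E_t\ge\frac{\underline{\sigma}}{4}\|\nabla\hat u_\ve\|_{L^2}^2-C$, which follows from $Q_t(\eta)\ge\frac{\underline{\sigma}}{2}\eta^2$, the trace inequality combined with Lemma \ref{lm:extension-u^e}, and the uniform bound $\sup_t\|J_\ve^e(t,\cdot)\|_{L^2(\Sigma)}\le C$ implied by \ref{H8}. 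With this adjustment only your Gronwall-admissible remainders appear on the right, and the argument closes.
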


Denoting $\hat u_\ve=e^{-\lambda t}u_\ve$ with $[\hat u_\ve]=\hat v_\ve$, one can obtain a priori estimates for the electric potential in the intra- and extracellular domains, as well as its traces on the membrane:
		\begin{align}
			\label{apr_est_u}
			&\int\limits_{F_\ve(\omega)\cup G_\ve(\omega)} \biggl( |\nabla \hat u_\ve|^2\, + |\hat u_\ve|^2 \biggr) dx \le C,\\
			\label{apr_est_ubound}
			&\ve \int_{M_\ve(\omega)} \biggl(|\hat u^e_\ve|^4  + |\hat u^{i}_\ve|^4 \biggr)\, dS(x) \le C
		\end{align}
for all $t\in [0,T]$.
The inequalities \eqref{apr_est_v}, \eqref{apr_est_g}, 	\eqref{apr_est_u} one can obtain in the same way as in \cite{JEREZHANCKES2023103789}.

{To prove \eqref{apr_est_ubound} we use  estimate \eqref{apr_est_v} for the potential $\hat v_\ve=\hat u^i_\ve-\hat u^e_\ve$ combined with the bound $\int\limits_{M_\ve(\omega)}\ |\hat u_\ve^e|^4 \, dS(x)\leq C/\ve$. The latter bound is derived from the 
$H^1$-estimate \eqref{apr_est_u}
with the help of Lemmas \ref{lm:apr est aux} and \ref{lm:extension-u^e} below.} 
\begin{lemma}
\label{lm:apr est aux}
For any function $u(x)\in H^1(G)$ the following estimate holds 
		\begin{align*}
			\ve \int_{M_\ve(\omega)} |u|^4\, dS(x)  \le C 
			\biggl(\ve \|u\|^4_{H^1(G)}  + \|u\|^4_{L^4(G)} \biggr) .
		\end{align*}
	\end{lemma}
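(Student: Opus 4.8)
The plan is to fiber the surface $M_\ve(\omega)$ over its cross sections and reduce the bound to a scaled trace inequality on a single disc, proved by the divergence theorem. Recall that $M_\ve(\omega)=\bigcup_{j\in J_\ve}(0,L)\times\partial A_{\ve j}(\omega)$, where each $A_{\ve j}(\omega)$ is a disc of radius $\rho_j=\ve r_j(\omega)$ centered at some $c_j$, and that the surface measure factorizes as $dS(x)=dx_1\,ds(x')$. Hence it suffices to estimate, for a.e.\ fixed $x_1$, the one-dimensional integral $\int_{\partial A_{\ve j}(\omega)}|u|^4\,ds(x')$ by volume integrals over the disc $A_{\ve j}(\omega)$, and then integrate in $x_1\in(0,L)$ and sum over $j\in J_\ve$.

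For the per-disc estimate I would use the radial field $\Psi(x')=(x'-c_j)/\rho_j$, which satisfies $\Psi\cdot\nu=1$ on $\partial A_{\ve j}(\omega)$, $|\Psi|\le1$ on $A_{\ve j}(\omega)$, and $\mathrm{div}\,\Psi=2/\rho_j$. Applying the divergence theorem to the field $|u|^4\Psi$ gives
\[
\int_{\partial A_{\ve j}(\omega)}|u|^4\,ds(x')\le\frac{2}{\rho_j}\int_{A_{\ve j}(\omega)}|u|^4\,dx'+4\int_{A_{\ve j}(\omega)}|u|^3\,|\nabla_{x'}u|\,dx'.
\]
This computation is legitimate for $u\in H^1(G)$: in three dimensions $H^1(G)\hookrightarrow L^6(G)$, so $|u|^3|\nabla u|\in L^1(G)$ and $|u|^4\in W^{1,1}(G)$ with $\nabla(|u|^4)=4|u|^2u\,\nabla u$, and one passes from smooth functions by density.

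Integrating in $x_1$, summing over $j$, using $|\nabla_{x'}u|\le|\nabla u|$ and the lower bound $\rho_j=\ve r_j(\omega)\ge\ve\,\underline{r}$ from hypothesis \ref{H3}, and finally multiplying by $\ve$, I obtain
\[
\ve\int_{M_\ve(\omega)}|u|^4\,dS(x)\le\frac{2}{\underline{r}}\int_{F_\ve(\omega)}|u|^4\,dx+4\ve\int_{F_\ve(\omega)}|u|^3\,|\nabla u|\,dx.
\]
The first term is at most $\tfrac{2}{\underline{r}}\|u\|_{L^4(G)}^4$ since $F_\ve(\omega)\subset G$. For the second term I apply the Hölder and Sobolev inequalities,
\[
\ve\int_{F_\ve(\omega)}|u|^3\,|\nabla u|\,dx\le\ve\,\|u\|_{L^6(G)}^3\,\|\nabla u\|_{L^2(G)}\le C\ve\,\|u\|_{H^1(G)}^4,
\]
using $H^1(G)\hookrightarrow L^6(G)$, which yields precisely the asserted estimate.

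The only genuinely delicate point is the $\ve$-scaling. In the first term the prefactor $\ve$ is absorbed by the factor $1/\rho_j\sim\ve^{-1}$ generated by the radial field, leaving the $\ve$-free norm $\|u\|_{L^4(G)}^4$; in the second term it is precisely the three-dimensional embedding $H^1\hookrightarrow L^6$ that allows one to reach the fourth power of the $H^1$ norm while retaining a single factor of $\ve$. The remainder is a routine combination of the divergence theorem, Hölder's inequality, and the radii bound \ref{H3}.
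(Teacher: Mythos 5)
Your proof is correct, and its overall architecture coincides with the one the paper outlines and delegates to Lemma 4.1 of \cite{pettersson2023bidomain}: reduce to a trace estimate on a single disc, sum over $j\in J_\ve$ using the disjointness of the axons (\ref{H1}), and finish with H\"older and the three-dimensional embedding $H^1(G)\hookrightarrow L^6(G)$. Where you genuinely differ is in how the one-disc estimate is produced. The paper's route is to prove the trace inequality on a fixed, unit-scale disc and then rescale by $\ve$; the rescaling is what generates the crucial factor $\ve^{-1}$ in front of the $L^4$-volume term. You instead stay at scale $\ve$ and generate that factor explicitly as the divergence $2/\rho_j$ of the radial field $\Psi=(x'-c_j)/\rho_j$, using $\Psi\cdot\nu=1$ on $\partial A_{\ve j}(\omega)$ and $|\Psi|\le 1$ inside. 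Your variant is self-contained (no appeal to an abstract trace theorem), yields explicit constants, and handles the random radii transparently, since only the lower bound $\rho_j\ge\ve\,\underline{r}$ from \ref{H3} enters; the paper's rescaling route buys the ability to quote a standard fixed-domain trace inequality and would extend verbatim to non-circular cross-sections. One presentational wrinkle you should iron out: you justify the integration by parts with the three-dimensional facts $|u|^4\in W^{1,1}(G)$ and $|u|^3|\nabla u|\in L^1(G)$, but you then apply the divergence theorem slice-wise in $x'$ for fixed $x_1$. Either apply the divergence theorem in 3D on the whole cylinder $F_{\ve j}(\omega)=(0,L)\times A_{\ve j}(\omega)$ — the ends contribute nothing since $\Psi\cdot e_1=0$ — or argue slice-wise, noting that for a.e.\ $x_1$ the restriction $u(x_1,\cdot)$ lies in $H^1(S)$, which in two dimensions embeds into $L^p(S)$ for every $p<\infty$, so $|u(x_1,\cdot)|^4\in W^{1,1}$ of each disc and Gauss--Green applies there. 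Either fix is immediate and does not affect the result.
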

The proof is given in Lemma 4.1 \cite{pettersson2023bidomain} where we treat the case of identical randomly distributed axons. A similar argument is presented in \cite{piatnitski2020homogenization}, Lemma 8.1. The main idea is to derive an estimate for one fixed disc, rescale by $\ve$, and sum up over all the discs to obtain the integral over $M_\ve(\omega)$.
	
With the functions $\hat u^e, \hat u^i$ defined in the extra- and intracellular domains, we 
associate the functions $\Pi_\ve^\omega \hat{u}_\ve^e$ and $\tilde{u}_\ve^i$ in the whole $G$ constructed in the following way. The function $\Pi_\ve^\omega u_\ve^e$ is a classical extension of $\hat{u}_\ve^e$ on $G$, while $\tilde{u}_\ve^i$ equals to the mean value of $\hat{u}_\ve^i$ on the cross sections of axons and is extended to $G$ by a cut-off function.  
\begin{lemma}[Extension from the extracellular part to $G$]
	\label{lm:extension-u^e}
  		For any $\omega$, there exists an extension operator $\Pi_\ve^\omega:H^1(G_\ve(\omega))\to H^1(G)$ 
		such that for any function $u(x)\in H^1(G_\ve)$, $u(x)=0$ on $S_0\cup S_1$, the following estimate holds:
		\begin{align*}
			\|\Pi_\ve^\omega u\|_{H^1(G)}  \le C \|\nabla  u\|_{L^2(G_\ve(\omega))}  .
		\end{align*}
	\end{lemma}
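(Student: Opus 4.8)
The plan is to construct $\Pi_\ve^\omega$ disc by disc in the cross-sectional variable $x'$, exploiting the product structure $G=(0,L)\times S$ together with the scale invariance of the two-dimensional Dirichlet integral $\int |\nabla_{x'}\,\cdot\,|^2\,dx'$ (which is invariant under dilations $x'\mapsto \ve x'$ in two dimensions). The basic building block is a reference extension from an annulus into the disc it encloses: for a radius $\rho\in[\underline{r},\overline{r}]$ and a fixed collar width $\delta=d_0/3$, there is a bounded linear, constant-preserving operator $E_\rho$ from $H^1$ of the annulus $\{\rho<|y'|<\rho+\delta\}$ into $H^1$ of the full disc $\{|y'|<\rho+\delta\}$. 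Applying $E_\rho$ to $u-\bar u$, where $\bar u$ is the mean of $u$ over the annulus, and invoking the Poincar\'e--Wirtinger inequality on the annulus yields the gradient-only estimate $\|\nabla_{y'}E_\rho u\|_{L^2}\le C\|\nabla_{y'} u\|_{L^2(\text{annulus})}$, with $C$ depending only on $\underline{r},\overline{r},d_0$. Rescaling by $\ve$ transfers this to each physical disc $A_{\ve j}(\omega)$ of radius $\ve r_j(\omega)$ \emph{without changing the constant}, precisely because the $L^2$ norm of the gradient is scale invariant in two dimensions.

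Next I would set up the collars. By \ref{H1} the discs are separated by at least $d_0\ve$, by \ref{H3} their radii lie in $[\ve\underline{r},\ve\overline{r}]$, and by the construction \eqref{eq:A_eps} every included disc stays at distance more than $d_0\ve$ from $\partial S$. Hence each $A_{\ve j}(\omega)$ can be surrounded by an annular collar of width $\ve d_0/3$ that lies entirely inside $S$, avoids $\partial S$, and is disjoint from the collars of all other discs; in particular every collar is contained in the extracellular cross-section and thus in $G_\ve(\omega)$. I then define $(\Pi_\ve^\omega u)(x_1,\cdot)$, for a.e.\ height $x_1\in(0,L)$, by applying the rescaled operator $E_{r_j}$ to $u(x_1,\cdot)$ on the collar of each disc to fill in $A_{\ve j}(\omega)$, and by keeping $\Pi_\ve^\omega u=u$ on $G_\ve(\omega)$.

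The estimate then follows by decoupling the cross-sectional and longitudinal derivatives. For the $x'$-gradient, the per-disc bound gives $\|\nabla_{x'}(\Pi_\ve^\omega u)(x_1,\cdot)\|_{L^2(S)}^2\le C\|\nabla_{x'} u(x_1,\cdot)\|_{L^2(S\cap G_\ve(\omega))}^2$ after summing over the disjoint collars, and integration in $x_1$ removes the dependence on the height. For the $x_1$-derivative, since each $E_{r_j}$ is linear and independent of $x_1$, it commutes with $\partial_{x_1}$, so $\partial_{x_1}(\Pi_\ve^\omega u)=\Pi_\ve^\omega(\partial_{x_1}u)$ on each disc and the same argument yields $\|\partial_{x_1}(\Pi_\ve^\omega u)\|_{L^2(G)}\le C\|\partial_{x_1}u\|_{L^2(G_\ve(\omega))}$. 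Combining the two gives $\|\nabla(\Pi_\ve^\omega u)\|_{L^2(G)}\le C\|\nabla u\|_{L^2(G_\ve(\omega))}$. Finally, because $u=0$ on the extracellular traces on $S_0\cup S_L$ and $E_{r_j}$ maps $0$ to $0$, we get $\Pi_\ve^\omega u=0$ on all of $S_0\cup S_L$, so the Poincar\'e inequality in the $x_1$-direction upgrades the gradient bound to the full norm $\|\Pi_\ve^\omega u\|_{H^1(G)}\le C\|\nabla u\|_{L^2(G_\ve(\omega))}$.

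The main obstacle is the uniformity of $C$ in both $\ve$ and $\omega$. The dependence on $\ve$ is neutralized by the scale invariance of the two-dimensional Dirichlet energy, which is exactly why the per-disc gradient estimate survives dilation with no loss. The dependence on $\omega$ is controlled because $\omega$ enters the construction only through the centers and radii of the discs: every estimate is local to a single collar, the per-collar constant depends solely on $\underline{r},\overline{r},d_0$ through \ref{H1} and \ref{H3}, and the summation over $j$ is licit precisely because the separation hypothesis guarantees the collars are mutually disjoint and interior to $G_\ve(\omega)$.
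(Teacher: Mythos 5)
Your proposal is correct, and it actually supplies what the paper leaves out: the paper's entire ``proof'' of Lemma \ref{lm:extension-u^e} is the sentence ``The proof can be found in \cite{pettersson2023bidomain}'', where the analogous statement is established for identical radii. Your route --- a disc-by-disc annulus-to-disc extension with mean subtraction, constants made uniform in $j$, $\ve$, $\omega$ via \ref{H1}, \ref{H3} and the construction \eqref{eq:A_eps}, the scale invariance of the two-dimensional Dirichlet integral, a slice-wise definition exploiting the cylindrical geometry, and a final Poincar\'e step in $x_1$ using the zero trace on $S_0\cup S_L$ --- is exactly the standard construction for cylindrical perforated domains and matches the cited argument in substance, generalized to radii $r_j(\omega)\in[\underline r,\overline r]$ (the only genuinely new feature here, which you handle by requiring the reference constant to be uniform over $\rho\in[\underline r,\overline r]$).

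One step should be made explicit, because as written ``the same argument'' does not literally apply to the longitudinal derivative. Your cross-sectional bound relies on Poincar\'e--Wirtinger on the annulus, which uses that $u(x_1,\cdot)$ is in $H^1$ of the annulus; but $\partial_{x_1}u(x_1,\cdot)$ is only in $L^2$ of the cross-section for a.e.\ $x_1$. To justify both the commutation $\partial_{x_1}(\Pi_\ve^\omega u)=\Pi_\ve^\omega(\partial_{x_1}u)$ (via difference quotients) and the estimate $\bigl\|E_{r_j}\bigl(\partial_{x_1}u-\overline{\partial_{x_1}u}\bigr)+\overline{\partial_{x_1}u}\bigr\|_{L^2(\mathrm{disc})}\le C\|\partial_{x_1}u\|_{L^2(\mathrm{annulus})}$, you need $E_{r_j}$ to be bounded from $L^2$ of the annulus to $L^2$ of the disc, not merely from $H^1$ to $H^1$; the same property is what makes your trace argument at $S_0\cup S_L$ rigorous. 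This is harmless --- reflection- or Stein-type extensions are bounded on $L^2$ and on $H^1$ simultaneously --- but it is an extra requirement on the choice of $E_\rho$ that must be stated. With that one-line addition your proof is complete and uniform in $\ve$ and $\omega$.
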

The proof can be found in \cite{pettersson2023bidomain}.
As a consequence of \eqref{apr_est_v}, Lemma \ref{lm:apr est aux} and Lemma \ref{lm:extension-u^e} we obtain \eqref{apr_est_ubound}.
\begin{lemma}[Quasi-extension from the intracellular part to $G$]
\label{lm:extension-u^i}
For $\hat u_\ve^i$  
satisfying \eqref{apr_est_u}, there exists $\tilde{u}_\ve^i$ defined in $G$ and such that
for any $t\in [0,T]$ and $\omega\in \Omega$, $\tilde{u}_\ve^i(t,\,\cdot \, , \omega)\in H^1(G)$, and 
\begin{itemize}
    \item[(i)] 
    $
    \displaystyle
    \int_{F_\ve(\omega)}|\tilde{u}_\ve^i - \hat u_\ve^i|^2\, dx
    + \ve \int_{M_\ve(\omega)}|\tilde{u}_\ve^i - \hat u_\ve^i|^2\, dS(x) \le C\, \ve^2$.
    \item[(ii)] For any $x_1\in [0,L]$, $\tilde{u}_\ve^i$ is constant on cross sections of axons $\nabla_{x'}\tilde{u}_\ve^i|_{A_\ve(\omega)} = 0$, and 
    \begin{align*}
    \ve^2 \int_G |\nabla_{x'} \tilde{u}_\ve^i|^2\, dx \le C.
    \end{align*}
\end{itemize}
\end{lemma}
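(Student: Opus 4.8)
The plan is to build $\tilde u_\ve^i$ by replacing $\hat u_\ve^i$ on each axon with its cross-sectional average and then pasting these averages into $G$ with steep cut-offs, extending by zero in the bulk of the extracellular part. Concretely, for each $j\in J_\ve$ and each $x_1\in[0,L]$ I set
\[
\bar u_\ve^i(x_1,j)=\frac{1}{|A_{\ve j}(\omega)|}\int_{A_{\ve j}(\omega)}\hat u_\ve^i(x_1,x')\,dx',
\]
and I choose cut-offs $\eta_j$ with $\eta_j\equiv 1$ on $A_{\ve j}(\omega)$, support in the concentric disc of radius $\ve r_j+\tfrac{d_0}{2}\ve$, and $|\nabla_{x'}\eta_j|\le C/\ve$. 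By \ref{H1} the enlarged discs are pairwise disjoint, and by the separation of $A_\ve(\omega)$ from $\partial S$ built into \eqref{eq:A_eps} they remain inside $G$. I then put $\tilde u_\ve^i(x_1,x')=\sum_{j}\eta_j(x')\,\bar u_\ve^i(x_1,j)$. Since $\bar u_\ve^i(x_1,j)$ is independent of $x'$ and $\eta_j\equiv1$ on the axon, $\tilde u_\ve^i$ equals the constant $\bar u_\ve^i(x_1,j)$ on each $A_{\ve j}(\omega)$, so $\nabla_{x'}\tilde u_\ve^i=0$ on $A_\ve(\omega)$, which gives the constancy in (ii) for free; membership $\tilde u_\ve^i(t,\cdot,\omega)\in H^1(G)$ is immediate from $\hat u_\ve^i\in H^1(F_\ve(\omega))$.

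To prove (i) I would localize to one cross section and combine the Poincar\'e--Wirtinger inequality with the trace inequality on a disc. Rescaling the disc of radius $\ve r_j$ to the unit disc makes both constants universal, because \ref{H3} confines every $r_j(\omega)$ to the fixed interval $(\underline r,\bar r)$. This yields, on each disc,
\[
\int_{A_{\ve j}(\omega)}|\hat u_\ve^i-\bar u_\ve^i|^2\,dx'\le C\ve^2\int_{A_{\ve j}(\omega)}|\nabla_{x'}\hat u_\ve^i|^2\,dx',
\]
and, feeding the same Poincar\'e bound into the scaled trace estimate,
\[
\ve\int_{\partial A_{\ve j}(\omega)}|\hat u_\ve^i-\bar u_\ve^i|^2\,ds(x')\le C\ve^2\int_{A_{\ve j}(\omega)}|\nabla_{x'}\hat u_\ve^i|^2\,dx'.
\]
Integrating in $x_1\in(0,L)$, summing over $j$, and invoking the a priori bound \eqref{apr_est_u} delivers both terms of (i) with right-hand side $C\ve^2$.

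For the gradient estimate in (ii) I note that $\nabla_{x'}\tilde u_\ve^i=\sum_j \nabla_{x'}\eta_j\,\bar u_\ve^i(x_1,j)$ is supported in the transition annuli, each of area $O(\ve^2)$, on which $|\nabla_{x'}\eta_j|\le C/\ve$; hence $\int_G|\nabla_{x'}\tilde u_\ve^i|^2\,dx\le C\sum_j\int_0^L|\bar u_\ve^i(x_1,j)|^2\,dx_1$. Bounding the square of the average by the $L^2$-average, $|\bar u_\ve^i(x_1,j)|^2\le |A_{\ve j}(\omega)|^{-1}\int_{A_{\ve j}(\omega)}|\hat u_\ve^i|^2\,dx'$, and using $|A_{\ve j}(\omega)|\ge \pi\underline r^2\ve^2$ from \ref{H3}, I get $\int_G|\nabla_{x'}\tilde u_\ve^i|^2\,dx\le C\ve^{-2}\int_{F_\ve(\omega)}|\hat u_\ve^i|^2\,dx\le C\ve^{-2}$ by \eqref{apr_est_u}; multiplying by $\ve^2$ finishes (ii).

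The essential point, and the only place where care is genuinely needed, is the uniformity of constants: everything reduces to the Poincar\'e--Wirtinger and trace constants on the unit disc, transported to radius $\ve r_j$ with the explicit powers of $\ve$ recorded above, and \ref{H3} is exactly what makes those powers and constants uniform in $j$, $\ve$, and $\omega$. The $\ve$-bookkeeping must likewise be tracked: the volume term in (i) gains its factor $\ve^2$ directly from Poincar\'e, while the surface term picks up one $\ve$ from the measure scaling in the trace inequality and another from Poincar\'e, so both land at order $\ve^2$. Finally, extending by zero away from the axons rather than interpolating between neighbouring averages is what keeps the argument elementary, since it removes any need to control differences $\bar u_\ve^i(x_1,i)-\bar u_\ve^i(x_1,j)$; the factor $\ve^2$ in (ii) is precisely what absorbs the $\ve^{-2}$ blow-up produced by the steep cut-offs.
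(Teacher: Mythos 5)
Your proposal is correct and coincides with the paper's own proof: the construction $\tilde u_\ve^i=\sum_j \eta_j(x')\,\bar u_\ve^i(x_1,j)$ with cut-offs equal to $1$ on each axon and supported in the $d_0\ve/2$-annulus is exactly the quasi-extension \eqref{tildeu}, and the paper likewise obtains (i) from the Poincar\'e inequality on cross sections (you additionally spell out the scaled trace estimate and the uniformity of constants via \ref{H3}, which the paper leaves implicit) and (ii) directly from the construction together with \eqref{apr_est_u}. No gaps; your write-up is simply a more detailed version of the same argument.
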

\begin{proof}
We set 
\begin{align}
\tilde{u}_\ve^i(t,x,\omega)= \sum_{j\in J_\ve} \chi\big(|x'/\ve -x_j(\omega)|-r_j(\omega)\big)\frac{1}{|A_{\ve j}(\omega)|} \int_{A_{\ve j}({\omega})} \hat{u}_\ve^i(t,x_1, x', \omega)\, dx',
\label{tildeu}
\end{align}
where $\chi(r)$ is a smooth cut-off function such that $\chi(r)=1$ if $r\le0$, and $\chi(r)=0$ if $r>d_0/2$ ($d_0$ is the minimal distance between the axons, see \ref{H1}). 
Then, using the Poincaré inequality on cross sections of axons, we obtain the bound (i), and (ii) follows directly from the construction of the function $\tilde {u}_\ve^i$.
\end{proof}
 
	\subsection{Compactness and passage to the limit} 
	\label{sec:pass-to-lim}
{In view of a priori estimates \eqref{apr_est_v}, \eqref{apr_est_g}, up to a subsequence}, we have the following stochastic two-scale convergence on random surfaces $M_\ve(\omega)$: 
\begin{align*}
	\hat v_\ve &\xrightharpoonup[]{2s} \ \hat v_0\in L^4(G_T; L^4(\Omega, \mu)),\\
	\hat g_\ve &\xrightharpoonup[]{2s} \ \hat g_0\in L^2(G_T; L^2(\Omega, \mu)),\\
	\partial_t \hat v_\ve &\xrightharpoonup[]{2s} \ \partial_t \hat v_0\in L^2(G_T; L^2(\Omega, \mu)),\\
	\partial_t \hat g_\ve &\xrightharpoonup[]{2s}\ \partial_t \hat g_0\in L^2(G_T; L^2(\Omega, \mu)),
\end{align*}
as $\ve \to 0$. Moreover, for any fixed $t\in [0,T]$,
\begin{align}
	\label{conv_vT}
	\hat v_\ve &\xrightharpoonup[]{2s} \ \hat v_0
	\ \text{in}\  L^2(G; L^2(\Omega, \mu)),\\
	\label{conv_gT}
	\hat g_\ve &\xrightharpoonup[]{2s} \ \hat g_0
	\ \text{in}\  L^2(G; L^2(\Omega, \mu)).
\end{align}
By \eqref{apr_est_u}, Lemmas \ref{lm:extension-u^e}, \ref{lm:extension-u^i}, and Theorem 3.7 in \cite{bourgeat1994stochastic},
possibly passing to a further subsequence, we have the following convergence:
\begin{align}
\label{conv_Pue}
\Pi_\ve^\omega\hat u^e_\ve(t,x,\omega) & \xrightharpoonup[]{2s}  \ \hat u_0^e(t,x)
&\ \text{in}\ L^2(G_T; L^2(\Omega, {\mathcal P})),\\
\label{conv_gradPue}
\nabla_x (\Pi_\ve^\omega \hat u^e_\ve(t,x,\omega)) &\xrightharpoonup[]{2s} \ \nabla\hat u_0^e(t,x) + \hat z(t,x,\omega)
&\ \text{in}\ (L^2(G_T; L^2(\Omega, {\mathcal P})))^3,\\
&\text{where } {\rm curl}_\omega \hat z=0, \ \int_\Omega \hat z\, d{\mathcal P}(\omega)=0,&\nonumber
\\
\label{eq:conv-extended-u^i}
\tilde{u}_\ve^i(t,x,\omega) &\xrightharpoonup[]{2s}  \hat{u}_0^i(t,x,\omega) 
&\ \text{in} \ L^2(G_T; L^2(\Omega, {\mathcal P})),\\
\label{eq:conv-grad-extended-u^i}
\ve \nabla_{x'} \tilde{u}_\ve^i(t,x,\omega) &\xrightharpoonup[]{2s} \nabla_\omega \hat{u}_0^i(t,x,\omega) 
&\ \text{in} \ L^2(G_T; L^2(\Omega, {\mathcal P})).
\end{align}
Then
\begin{align}
\label{conv_uiG}
	\chi_{_{F_\ve}}(x) \hat u^{i}_\ve(t,x,\omega) &\xrightharpoonup[]{2s} \ \chi_{\A }(\omega)\hat u_0^{i}(t,x,\omega)
	\ \text{in}\ L^2(G_T; L^2(\Omega, {\mathcal P})),\\
	\label{conv_duiG}
	\chi_{_{F_\ve}}(x)\partial_{x_1}\hat u^{i}_\ve(t,x,\omega) &\xrightharpoonup[]{2s} \ \chi_{\A }(\omega) \partial_{x_1}\hat u_0^{i}(t,x,\omega)
	\ \text{in} \ L^2(G_T; L^2(\Omega, {\mathcal P})), \\ 
 & \hspace{0.8cm}\chi_{\A }(\omega)\nabla_\omega \hat{u}_0^i(t,x,\omega)=0.
\label{eq:grad-omega=0}
 \end{align}
Note that \eqref{eq:grad-omega=0} follows from \eqref{eq:conv-grad-extended-u^i} and the the fact that $\nabla_{x'}\tilde{u}_\ve^i=0$ for $x'\in A_{\ve}(\omega)$.
{In the next lemma we prove the convergence of traces of $\hat u_\ve^e, \hat u_\ve^i$ on $M_\ve(\omega)$.}
\begin{lemma}
	\label{lm:conv_u_ds}
	For the traces of $\hat u^e_\ve, \hat u^{i}_\ve$ on the random surface $M_\ve(\omega)$ the following convergences holds:
	\begin{align}
		\label{conv_ue_ds}
		\hat u^e_\ve(t,x,\omega) &\xrightharpoonup[]{2s} 
  \ \hat u_0^e(t,x)
		\ \text{in}
		\ L^4(G_T; L^4(\Omega, \mu)),\\
		\label{conv_ui_ds}
		\hat u^{i}_\ve(t,x,\omega) &\xrightharpoonup[]{2s} \ \hat u_0^{i}(t,x,\omega)
		\ \mbox{in}\ L^4(G_T; L^4(\Omega, \mu)),
	\end{align}
	where $\hat u_0^e(t,x)$ and $\hat u_0^{i}(t,x,\omega)$ are given by \eqref{conv_Pue} and \eqref{eq:conv-extended-u^i}, respectively.
\end{lemma}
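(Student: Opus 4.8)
The plan is to treat the two traces separately, since their two-scale limits are of different nature: $\hat u_0^e(t,x)$ is deterministic (independent of $\omega$), whereas $\hat u_0^i(t,x,\omega)$ genuinely depends on $\omega$ through ${\cal K}(\A)$. In both cases the a priori bound \eqref{apr_est_ubound} shows that the traces are bounded in $L^4$ with respect to the scaled surface measure $\ve\,dS\,d{\mathcal P}$, so by the compactness of stochastic two-scale convergence on random surfaces (Definition \ref{def_2s_on_ax_t}) we may extract subsequences converging two-scale to some $\zeta^e,\zeta^i\in L^4(G_T;L^4(\Omega,\mu))$. It then remains to identify $\zeta^e=\hat u_0^e$ and $\zeta^i=\hat u_0^i$.

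For the extracellular trace \eqref{conv_ue_ds}, I would first upgrade the bulk convergence. Since $\Pi_\ve^\omega\hat u_\ve^e$ is bounded in $H^1(G)$ uniformly in $t$ and $\omega$ (Lemma \ref{lm:extension-u^e} together with \eqref{apr_est_u}) and its two-scale limit $\hat u_0^e$ is independent of $\omega$, Theorem 3.7 in \cite{bourgeat1994stochastic} yields strong stochastic two-scale convergence, which for a deterministic limit is equivalent to strong convergence in $L^2(G_T;L^2(\Omega,{\mathcal P}))$. I would then approximate $\hat u_0^e$ in $L^2(G_T)$ by a smooth function $\phi(t,x)$ and split $\hat u_\ve^e=\phi+(\hat u_\ve^e-\phi)$ on $M_\ve(\omega)$. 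For the smooth term the surface integral against any test function converges by Lemma \ref{eq:conv-test-func-surfaces}; for the remainder I would apply Hölder's inequality on the surface together with the $L^2$-analogue of the trace estimate in Lemma \ref{lm:apr est aux}, namely $\ve\int_{M_\ve(\omega)}|\hat u_\ve^e-\phi|^2\,dS\le C(\ve\|\hat u_\ve^e-\phi\|_{H^1(G)}^2+\|\hat u_\ve^e-\phi\|_{L^2(G)}^2)$, integrated over $t$ and ${\mathcal P}$. The first term vanishes because of the extra factor $\ve$ against the uniform $H^1$ bound, and the second is controlled by the strong $L^2$ convergence and the approximation $\phi\to\hat u_0^e$; letting $\phi\to\hat u_0^e$ identifies $\zeta^e=\hat u_0^e$.

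For the intracellular trace \eqref{conv_ui_ds} I would first replace $\hat u_\ve^i$ by its quasi-extension $\tilde u_\ve^i$: part (i) of Lemma \ref{lm:extension-u^i} gives $\ve\int_{M_\ve(\omega)}|\tilde u_\ve^i-\hat u_\ve^i|^2\,dS\le C\ve^2\to0$, so by Hölder and the a priori $L^4$ bound it suffices to prove the surface two-scale convergence of $\tilde u_\ve^i$ to $\hat u_0^i$. The essential point is that $\tilde u_\ve^i$ is constant on each axon cross-section (part (ii) of Lemma \ref{lm:extension-u^i}), so on each membrane $M_{\ve j}(\omega)=(0,L)\times\partial A_{\ve j}(\omega)$ it equals a function of $(t,x_1,\omega)$ only and can be factored out of the inner circle-integral of the oscillating test function. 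One then reconciles the circle-integrals over $\partial A_{\ve j}$, which converge through the Campbell formula \eqref{Campbell} against $d\mu$, with the disc-integrals over $A_{\ve j}$ governing the already established bulk convergence \eqref{conv_uiG}, which converge by the Birkhoff theorem against $d{\mathcal P}$. These are matched precisely by the geometric factor $r(\omega)/2$ of Lemma \ref{lemma_radii}, which records the ratio between the surface and volume densities; since the bulk limit $\hat u_0^i\in{\cal K}(\A)$ is itself constant on axons, this yields $\zeta^i=\hat u_0^i$.

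The main obstacle is transferring two-scale convergence from the three-dimensional bulk to the lower-dimensional random surface $M_\ve(\omega)$. For the extracellular potential this is resolved by the strong-convergence upgrade combined with the trace inequality, while for the intracellular potential the difficulty lies in controlling the oscillating test function restricted to the membranes; it is exactly here that the Palm measure, the Campbell formula, and the factor $r(\omega)/2$ relating ${\mathcal P}|_{\A}$ and $\mu$ are indispensable.
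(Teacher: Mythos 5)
Your overall strategy (extract surface two--scale limits $\zeta^e,\zeta^i$ from the bound \eqref{apr_est_ubound}, then identify them) is reasonable, and your intracellular half shares ingredients with the paper (quasi-extension, Campbell formula, Lemma \ref{lemma_radii}). However, the extracellular half rests on a step that is false. The implication ``$\Pi_\ve^\omega\hat u_\ve^e$ is bounded in $H^1(G)$ uniformly in $\omega$ and its two-scale limit is deterministic, hence Theorem 3.7 of \cite{bourgeat1994stochastic} gives \emph{strong} two-scale convergence'' does not hold: that theorem, as used in \eqref{conv_Pue}--\eqref{conv_gradPue}, yields only weak two-scale convergence to a deterministic limit together with the gradient splitting $\nabla\hat u_0^e+\hat z$. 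In the ``in the mean'' framework there is no Rellich-type compactness in the $\omega$ variable. A counterexample: take $u_\ve(x,\omega)=f(\omega)\psi(x)$ with $f\in L^\infty(\Omega,{\mathcal P})$ nonconstant and $\psi$ smooth, vanishing on $S_0\cup S_L$. This sequence is bounded in $H^1(G)$ uniformly in $\omega$; by the mean ergodic theorem its stochastic two-scale limit in the mean is the \emph{deterministic} function $\langle f\rangle\psi(x)$ (the convergence averages out randomness that is not of the locally stationary form $g(T_{x'/\ve}\omega)$), yet $\|u_\ve-\langle f\rangle\psi\|_{L^2(G\times\Omega)}=\|f-\langle f\rangle\|_{L^2(\Omega,{\mathcal P})}\|\psi\|_{L^2(G)}>0$ for every $\ve$. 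Since your control of the remainder via the $L^2$ analogue of Lemma \ref{lm:apr est aux} requires exactly $\|\Pi_\ve^\omega\hat u_\ve^e-\hat u_0^e\|_{L^2(G_T\times\Omega)}\to 0$, the identification $\zeta^e=\hat u_0^e$ is not established. Note that the paper uses your trace-inequality argument only in Corollary \ref{cor:conv phi^e bound}, for the auxiliary test functions $\phi^e_\ve$, where pathwise a.s.\ strong convergence is proved separately (Lemma \ref{lm:conv phi^e}, via monotonicity of the auxiliary problem); no such pathwise result is available for $\hat u_\ve^e$ itself, and the paper's proof of Lemma \ref{lm:conv_u_ds} is designed precisely so as never to need it.

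The intracellular half is closer, but the ``reconciliation'' is asserted where the real work lies, and as stated it fails for general test functions. Definition \ref{def_2s_on_ax_t} requires testing against $\varphi(t,x)\phi(T_{x'/\ve}\omega)$ with arbitrary $\phi$; such $\phi$ oscillates on the same scale $\ve$ as the discs, so the circle integral $\ve\int_{\partial A_{\ve j}}\varphi\,\phi(T_{x'/\ve}\omega)\,ds$ is \emph{not} equal (even approximately) to $\tfrac{2}{r_j}\int_{A_{\ve j}}\varphi\,\phi(T_{x'/\ve}\omega)\,dx'$; the factor $r(\omega)/2$ of Lemma \ref{lemma_radii} matches surface and volume quantities only when the $\omega$-dependent factor is constant on axons. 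After factoring out $\tilde u^i_\ve$, what your computation actually produces is a bulk pairing against the circle average $\bar\phi(\omega)=\tfrac{1}{2\pi r_{j_0}(\omega)}\int_{\partial A_{j_0}(\omega)}\phi(T_{y'}\omega)\,ds(y')$ (a stationary, constant-on-axon function); to conclude you must apply \eqref{conv_uiG} to this modified test function, convert the resulting ${\mathcal P}$-integral into a $\mu$-integral by Lemma \ref{lemma_radii}, and finally use that circle averaging is the $\mu$-orthogonal projection onto ${\cal K}(\A)$ together with $\hat u_0^i\in{\cal K}(\A)$ to replace $\bar\phi$ by $\phi$ in the limit pairing. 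None of these steps appears in your sketch (nor does the verification that the subsequence limit $\zeta^i$ is itself constant on axons, which your identification route also needs). For comparison, the paper's proof handles both components uniformly and needs only \emph{weak} bulk convergence: it approximates the surface measure $\ve\,dS$ by the weighted volume measure $\rho_\delta(T_{x'/\ve}\omega)\,dx$ with $\rho_\delta$ a mollified arc-length density, splits the error as ${\cal I}_1+{\cal I}_2+{\cal I}_3$ as in \eqref{44}, passes to the limit in ${\cal I}_1$ using \eqref{eq:conv-extended-u^i}, and bounds ${\cal I}_2,{\cal I}_3$ by $C\sqrt\delta$ through mollifier and trace estimates, letting $\ve\to0$ first and then $\delta\to0$.
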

\begin{proof} 
We prove the convergence for the intracellular component  \eqref{conv_ui_ds}. The convergence of $\hat{u}_\ve^e$ \eqref{conv_ue_ds} is proved similarly. 

To this end, it is sufficient to show that, as $\ve\to 0$,
\begin{align}
{\cal I}(\ve) = \ve \int\limits_0^T \int\limits_{\Omega} \int\limits_{M_\ve(\omega)}
\tilde u^i_\ve (t,x,\omega) \varphi(t,x) \phi(T_{\frac{x'}{\ve}}\omega)
dS(x) d{\mathcal P}(\omega) dt \nonumber \\ 
- \int\limits_{G_T} \int\limits_{\Omega} 
\hat u^i_0 (t,x,\omega) \varphi(t,x) \phi(\omega)
d\mu(\omega) dxdt \to 0,
\label{44}
\end{align}
for any $\varphi\in C_0^\infty(G_T), \phi\in D^\infty(\Omega)$, where $D^\infty(\Omega)$ is defined in  \eqref{def:D^inft}, $\tilde u^i_\ve (t,x,\omega)$ is the quasi-extension defined by \eqref{tildeu}, 
and $\hat u_0^i(t,x,\omega)$ is the limit function in \eqref{eq:conv-extended-u^i}. We will follow the proof of Lemma 8.1 in \cite{piatnitski2020homogenization}. The main trick is to approximate the surface measure on $M_\ve(\omega)$ by the weighted Lebesgue measure $\rho_\delta(T_{x'/\ve}\omega)\, dx_1 dx'$ on $\mathbb{R}^3$ where
\begin{align}
\rho_\delta(\omega)=\frac{1}{\delta^2}\int\limits_{\partial A(\omega)} 
k\left(\frac{y'}{\delta}\right)ds(y'),
\end{align}
$k(y)\in C_0^\infty(\mathbb{R}^2)$ being a non-negative symmetric function such that $\int_{\mathbb{R}^2} k(y)dy=1$, $\delta>0$.

We write ${\cal I}(\ve)$ as a sum
\begin{align*}
{\cal I}(\ve) &= {\cal I}_1(\ve, \delta) + {\cal I}_2(\ve, \delta) + {\cal I}_3(\delta),\\
{\cal I}_1(\ve, \delta)&=
\int\limits_{G_T} \int\limits_{\Omega}
\tilde u^i_\ve (t,x,\omega) \varphi(t,x) \phi(T_{\frac{x'}{\ve}}\omega)
\rho_\delta(T_{\frac{x'}{\ve}}\omega) \,d{\mathcal P}(\omega) dx_1 dx' dt \nonumber \\ 
&- \int\limits_{G_T} \int\limits_{\Omega} 
\hat u^i_0 (t,x,\omega) \varphi(t,x) \phi(\omega)\,
d\mu_\delta(\omega) \,dxdt;\\
{\cal I}_2(\ve,\delta)&=\ve \int\limits_0^T \int\limits_{\Omega} \int\limits_{M_\ve(\omega)}
\tilde u^i_\ve (t,x,\omega) \varphi(t,x) \phi(T_{\frac{x'}{\ve}}\omega)\,
dS(x) d{\mathcal P}(\omega) dt \nonumber \\
&
-
\int\limits_{G_T} \int\limits_{\Omega}
\tilde u^i_\ve (t,x,\omega) \varphi(t,x) \phi(T_{\frac{x'}{\ve}}\omega)
\rho_\delta(T_{\frac{x'}{\ve}}\omega) d{\mathcal P}(\omega)\, dx_1 dx' dt;\\
{\cal I}_3(\delta)&=\int\limits_{G_T} \int\limits_{\Omega} \hat u^i_0 (t,x, \omega) \varphi(t,x) \phi(\omega) \left(d{\mu_\delta}(\omega)-d\mu(\omega)\right)\,
dx dt. 
\end{align*}
Here $d\mu_\delta(\omega)=\rho_\delta(\omega) d{\mathcal P}(\omega)$ is the Palm measure of $\rho_\delta(T_{x'/\ve}\omega)\,dx_1 dx'$.

Thanks to \eqref{eq:conv-extended-u^i}
\begin{align}
{\cal I}_1(\ve, \delta) \to 0, \ \ \ve\to 0.
\label{46}
\end{align}
Next, we estimate ${\cal I}_2(\ve, \delta)$. We fix $t, x_1, \omega$ and consider functions 
\begin{align}
\tilde w_\ve (x')=\tilde u^i_\ve (t,x_1,x',\omega)\varphi(t,x_1,x') \phi(T_{\frac{x'}{\ve}}\omega)
\label{tilde_w}
\end{align} 
extended by zero on  $\mathbb{R}^2$. Changing the variables we have
\begin{align}
&\int\limits_{\mathbb{R}^2} \tilde w_\ve (x') \rho_\delta(T_{\frac{x'}{\ve}}\omega) dx'=
\ve^2\int\limits_{\mathbb{R}^2} \tilde w_\ve (\ve y') \rho_\delta(T_{y'}\omega) dy'\nonumber\\
=& \frac{\ve^2}{\delta^2}\int\limits_{\mathbb{R}^2} \tilde w_\ve (\ve y') \int\limits_{\partial A(T_{y'}\omega)} 
k\left(\frac{y''}{\delta}\right)ds(y'') dy'\nonumber\\
=&
\frac{\ve^2}{\delta^2}\int\limits_{\mathbb{R}^2} \tilde w_\ve (\ve y') \int\limits_{\partial A(\omega)-y'} 
k\left(\frac{y''}{\delta}\right)ds(y'') dy'\nonumber\\
=&\frac{\ve^2}{\delta^2}\int\limits_{\mathbb{R}^2} \tilde w_\ve (\ve y') \int\limits_{\partial A(\omega)} 
k\left(\frac{z'-y'}{\delta}\right)ds(z') dy'\nonumber\\
=&\frac{\ve^2}{\delta^2}\int\limits_{\partial A(\omega)} \int\limits_{\mathbb{R}^2}   
k\left(\frac{y'-z'}{\delta}\right)\tilde w_\ve (\ve z') dz'ds(y') \nonumber
\end{align}
and then 
\begin{align}
&\ve  \int\limits_{\ve \partial A(\omega)}
\tilde w_\ve (x') ds(x') - 
\int\limits_{\mathbb{R}^2} \tilde w_\ve (x') \rho_\delta(T_{\frac{x'}{\ve}}\omega) dx =
\ve^2 \int\limits_{\partial A(\omega)}
w_{\ve,\delta} (y') ds(y'),
\label{enter_w_ed}
\end{align}
with
\begin{align}
w_{\ve,\delta} (y')= \tilde w_\ve (\ve y') - 
\frac{1}{\delta^2}\int\limits_{\mathbb{R}^2} k\left(\frac{y'-z'}{\delta}\right)\tilde w_\ve (\ve z') dz'.
\end{align}
Due to the properties of mollifiers (see, e.g., Lemma 3.5 in \cite{Majda_Bertozzi_2001}), we have
\begin{align}
\|w_{\ve,\delta} \|_{H^1({\mathbb{R}^2})}
\leq C \|\tilde w_\ve (\ve \ \cdot \ )\|_{H^1({\mathbb{R}^2})}, \quad
\|w_{\ve,\delta} \|_{L^2({\mathbb{R}^2})}
\leq C \delta\|\tilde w_\ve (\ve\ \cdot \ )\|_{H^1({\mathbb{R}^2})}. 
\label{est_w_ve_delta}
\end{align}
Using the Cauchy-Schwarz inequality, trace inequality, and \eqref{est_w_ve_delta} we get
\begin{align}
&\ve^2 \int\limits_{\partial A(\omega)}
|w_{\ve,\delta} (y')| ds(y')
\leq C\ve \Big(\sum_j \int\limits_{\partial A_j(\omega)}
|w_{\ve,\delta} (y')|^2 ds(y')\Big)^{1/2}\nonumber\\
&\leq C\ve \Big(\sum_j \|w_{\ve,\delta} \|_{H^1({A_j(\omega)})} \|w_{\ve,\delta} \|_{L^2(A_j(\omega))}\Big)^{1/2}\nonumber\\
&\leq C\ve \Big( \delta\|w_{\ve,\delta} \|^2_{H^1({\mathbb{R}^2})} + \delta^{-1}\|w_{\ve,\delta} \|^2_{L^2({\mathbb{R}^2})}\Big)^{1/2}\nonumber\\
&\leq C \ve \sqrt{\delta} \, \|\tilde w_\ve (\ve \ \cdot \ )\|_{H^1({\mathbb{R}^2})}=
C\sqrt{\delta}
\left(\|\tilde w_\ve \|_{L^2({\mathbb{R}^2})}+
\ve\|\nabla\tilde w_\ve \|_{L^2({\mathbb{R}^2})}\right).
\label{est_w_ve_delta_next}
\end{align}
Integrating \eqref{enter_w_ed} in $t, x_1, \omega$, using \eqref{est_w_ve_delta_next} 
 and Lemmas \ref{lm:apriori-est}, \ref{lm:extension-u^i} we obtain
\begin{align}
&\label{I1}\bigl| {\cal I}_2(\ve,\delta) \bigr|\leq C\sqrt{\delta}.
\end{align}
To estimate ${\cal I}_3(\delta)$, we assume for simplicity that the function $\hat u^i_0 (t,x_1,x', \omega)$
is continuously differentiable in $x'$ (otherwise, it will be approximated by continuously differentiable in $x'$ functions). Applying Campbell's formula \eqref{Campbell} 
 to the function 
$\hat u^i_0 (t,x_1,x', T_{\frac{x'}{\ve}}\omega)\varphi(t,x_1,x') \phi(T_{\frac{x'}{\ve}}\omega)$
we obtain
\begin{align}
&\int\limits_{G_T} \int\limits_{\Omega}
\hat u^i_0(t,x,\omega) \varphi(t,x) \phi(\omega)
d\mu(\omega) dx dt \nonumber \\ 
=&\ve \int\limits_0^T \int\limits_0^L  \int\limits_{\Omega} 
\int\limits_{\ve \partial A(\omega)}
\hat u^i_0 (t,x_1,x',T_{\frac{x'}{\ve}}\omega) \varphi(t,x) \phi(T_{\frac{x'}{\ve}}\omega)
ds(x') d{\mathcal P}(\omega) dx_1 dt
\label{52}
\end{align}
and
\begin{align}
&\int\limits_{G_T} \int\limits_{\Omega} 
\hat u^i_0 (t,x,\omega) \varphi(t,x) \phi(\omega)
d\mu_\delta(\omega) dxdt  \nonumber \\ 
=&\ve^2 \int\limits_{G_T}  \int\limits_{\Omega}  
\hat u^i_0 (t, x, T_{\frac{x'}{\ve}}\omega) \varphi(t,x) \phi(T_{\frac{x'}{\ve}}\omega)
\rho_\delta(T_{\frac{x'}{\ve}}\omega) d{\mathcal P}(\omega) dx dt
\label{53}
\end{align}
for any $\ve>0$.
Now we fix $t, x_1, \omega$ and consider the function 
$$
\tilde z_\ve (x')=\tilde u_0^i (t,x_1,x',T_{\frac{x'}{\ve}}\omega)\varphi(t,x_1,x') \phi(T_{\frac{x'}{\ve}}\omega)
$$ 
extended by zero on  $\mathbb{R}^2$ in place of \eqref{tilde_w}.
Then we proceed as before for ${\cal I}_2(\ve, \delta)$ and get the following estimate for ${\cal I}_3(\delta)$:
\begin{align}
&\bigl| {\cal I}_3(\delta) \bigr| \leq C\sqrt{\delta}
\Bigl( \int\limits_{G_T} \int\limits_{\Omega} 
\left( 
\bigl|\hat u^i_0 \bigr|^2
+\bigl| \nabla_{\omega}\hat u^i_0   \bigr|^2    
+\ve^2\bigl| \nabla_{x'} \hat u^i_0    \bigr|^2
\right)d{\mathcal P}(\omega) dx dt \Bigr)^{1/2}. \label{I2estim}
\end{align}

Taking into account the convergence \eqref{46} and estimates \eqref{I1}, \eqref{I2estim}
we pass to the limit as $\ve, \delta \to 0$ and obtain \eqref{44}. 

In the case when the function $\hat u^i_0 (t,x_1,x', \omega)$
does not have $C^1$- regularity in $x'$, one can 
approximate it by 
\begin{align}
\hat u_{0,\delta} (t,x_1,x', \omega) = 
\frac{1}{\delta^2}\int\limits_{S} k\left(\frac{x'-y'}{\delta}\right)\hat u^i_0 (t,x_1,y', \omega) dy',
\end{align}
and use \eqref{I2estim} for sufficiently small $\ve>0$.
Lemma \ref{lm:conv_u_ds} is proved.
\end{proof}

To pass to the limit in
\eqref{eq:orig-prob-nonlocal-hat}, we use the Minty method \cite{minty1962}.
For an arbitrary function $\psi(t,x,\omega)\in C^2(G_T, L^2(\Omega,\mu))$, we set $\psi_\ve(t,x,\omega)=\psi(t,x, T_{x'/\ve}\omega)$ which will be used as a test function for $g_\ve$. 
\noindent
We define also $\phi_\ve(t,x,\omega)=\begin{cases} \phi_\ve^{i}(t,x,\omega)\,\, \mbox{if}\,\, x\in F_\ve (\omega)\\\phi_\ve^{e}(t,x,\omega)\,\, \mbox{if}\,\, x\in G_\ve (\omega) \end{cases}$ which we will be chosen later as a test function for $v_\ve$. 

Let $\phi^e_0\in C^2(G_T)$ be an arbitrary function that equals zero on $S_0\cup S_L$. Take $\phi^e_\ve$ as a solution of the problem
\begin{align}
	\label{def phi^e_eq}
            &\di{\hat\sigma^e\left(t,|\nabla\phi^e_\ve|\right)\nabla \phi^e_\ve}
			=\frac{1}{1-\Lambda}\, \di{\hat\sigma_{\rm hom}^e\left( \nabla\phi^e_0\right) } \, &\text{in} \  &G_\ve(\omega),\\
			&\hat\sigma^e\left(t,|\nabla\phi^e_\ve|\right)\nabla\phi^e_\ve\cdot \nu   = 0 \, &\text{on} \ &M_\ve(\omega), \label{def phi^e_mc} \\
			&\hat\sigma^e\left(t,|\nabla\phi^e_\ve|\right)\nabla\phi^e_\ve \cdot \nu   = \hat\sigma_{\rm hom}^e\left(\nabla\phi^e_0\right) \cdot \nu \, & \text{on}\ &\Sigma,  \label{def phi^e_bc1}\\
			&\phi^e_\ve = 0 \, &\text{on}\  &S_{0} \cup S_L. 
            \label{def phi^e_bc2} 
\end{align}
Recall that $\Lambda={\mathcal P}(\A )$ (see \eqref{dens}), the transformed effective extracellular conductivity $\hat \sigma^e_{\rm hom}$ is define by \eqref{pot}, and
\begin{align}
\label{hat eff tensor}
&\hat \sigma^e_{\rm hom}(t, \xi)=e^{-\lambda t}\nabla\Phi(e^{\lambda t}\xi), \quad \xi\in \mathbb{R}^3.
\end{align}
				
\begin{lemma}
\label{lm:conv phi^e}
For the unique solution $\phi^e_\ve$ of \eqref{def phi^e_eq}--\eqref{def phi^e_bc2}, the following uniform in $t\in[0,T]$ convergence holds for a.a. $\omega\in \Omega$:
\begin{align}
\label{conv phi^e}
\Pi_\ve^\omega\phi^e_\ve \rightarrow \phi^e_0, \ \mbox{strongly in}\  L^4(G),
\end{align}
as $\ve\to 0$. Moreover,
\begin{align}
\label{conv_grad_phie}
\nabla(\Pi^\omega_\ve\phi^e_\ve) \xrightharpoonup[]{2s} \nabla\phi_0^e + z(t,x,\omega)
\ \ \text{in}\ (L^2(G_T; L^2(\Omega, {\mathcal P})))^3,
			\end{align}
where ${\rm curl}_\omega z=0$, $\displaystyle\int\limits_\Omega z\, d{\mathcal P}(\omega)=0$.
\end{lemma}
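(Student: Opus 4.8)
The plan is to treat \eqref{def phi^e_eq}--\eqref{def phi^e_bc2}, for each fixed $t$ and almost every $\omega$, as a monotone elliptic corrector problem and to homogenize it by the Minty method, the factor $\tfrac{1}{1-\Lambda}$ and the Neumann datum on $\Sigma$ being tuned precisely so that the macroscopic limit collapses to $\phi_0^e$. First I would record the weak formulation: since $\phi_0^e\in C^2(G_T)$ the field $B:=\hat\sigma^e_{\rm hom}(\nabla\phi_0^e)$ is $C^1$ and $F:=\tfrac{1}{1-\Lambda}\di{B}$ is continuous, so $\phi_\ve^e$ is the unique solution of
\[
\int_{G_\ve(\omega)}\hat\sigma^e(t,|\nabla\phi_\ve^e|)\nabla\phi_\ve^e\cdot\nabla\psi\,dx=\int_\Sigma (B\cdot\nu)\,\psi\,dS(x)-\int_{G_\ve(\omega)}F\,\psi\,dx
\]
for all $\psi\in H^1(G_\ve(\omega))$ with $\psi|_{S_0\cup S_L}=0$; existence, uniqueness and monotonicity follow from \ref{H5} and Lemma \ref{lm:monotonicity} exactly as for \eqref{eq:hatprob}. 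Testing with $\psi=\phi_\ve^e$, using the coercivity $\hat\sigma^e(t,|\xi|)\xi\cdot\xi\ge\underline{\sigma}|\xi|^2$, a trace inequality on $\Sigma$ and Young's inequality, I obtain $\|\nabla\phi_\ve^e\|_{L^2(G_\ve(\omega))}\le C$ uniformly in $\ve,t$ and a.e.\ $\omega$; Lemma \ref{lm:extension-u^e} then gives $\|\Pi_\ve^\omega\phi_\ve^e\|_{H^1(G)}\le C$.

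Next I would extract two-scale limits. By Theorem 3.7 in \cite{bourgeat1994stochastic}, up to a subsequence $\Pi_\ve^\omega\phi_\ve^e\xrightharpoonup[]{2s}\Phi_0(t,x)$ (independent of $\omega$) and $\nabla(\Pi_\ve^\omega\phi_\ve^e)\xrightharpoonup[]{2s}\nabla\Phi_0+z$ with $z=(0,w)$, $w\in V^2_{\rm pot}(\Omega,\mathcal{P})$, ${\rm curl}_\omega z=0$, $\int_\Omega z\,d\mathcal{P}=0$ (only the transverse components oscillate since $T_{x'}$ acts in $x'$ only); moreover the compact embedding $H^1(G)\hookrightarrow\hookrightarrow L^4(G)$ gives $\Pi_\ve^\omega\phi_\ve^e\to\Phi_0$ strongly in $L^4(G)$. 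It then remains to identify $\Phi_0=\phi_0^e$ and $z$ as the minimizer in \eqref{pot}. Denote by $\eta$ the two-scale limit of the extended flux $\chi_{G_\ve}\hat\sigma^e(t,|\nabla\phi_\ve^e|)\nabla\phi_\ve^e$, supported on $\Omega\setminus\mathcal{A}$. Using $\psi=\phi_\ve^e$ in the weak form together with the strong convergence just obtained yields the energy identity $\lim_\ve\int_{G_\ve}\hat\sigma^e(t,|\nabla\phi_\ve^e|)\nabla\phi_\ve^e\cdot\nabla\phi_\ve^e\,dx=\int_\Sigma(B\cdot\nu)\Phi_0\,dS-\int_G(1-\Lambda)F\Phi_0\,dx$. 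Feeding this into the monotonicity inequality with oscillating test fields $\nabla\psi_0+\theta(T_{x'/\ve}\omega)$ ($\theta$ potential) and letting $\ve\to0$, the standard Minty argument gives $\eta=\hat\sigma^e(t,|\nabla\Phi_0+z|)(\nabla\Phi_0+z)$ a.e.\ on $\Omega\setminus\mathcal{A}$; testing instead with purely oscillating functions $\ve\psi_1\beta(T_{x'/\ve}\omega)$ produces the cell relation, which forces $z$ to be the minimizer in \eqref{pot} and hence $\int_{\Omega\setminus\mathcal{A}}\eta\,d\mathcal{P}=\hat\sigma^e_{\rm hom}(\nabla\Phi_0)$ by \eqref{know_how}.

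Finally, taking a deterministic test function $\psi=\psi_0(t,x)$ and using $\chi_{G_\ve}\rightharpoonup 1-\Lambda$ (see \eqref{dens}) together with $(1-\Lambda)F=\di{B}$, the two boundary integrals over $\Sigma$ cancel and the limit macroscopic equation reduces to $\int_G\hat\sigma^e_{\rm hom}(\nabla\Phi_0)\cdot\nabla\psi_0\,dx=\int_G\hat\sigma^e_{\rm hom}(\nabla\phi_0^e)\cdot\nabla\psi_0\,dx$ for all admissible $\psi_0$, with identical boundary data; strict monotonicity of $\hat\sigma^e_{\rm hom}=\nabla\Phi$ (convexity of $\Phi$ in \eqref{pot}) yields $\Phi_0=\phi_0^e$, which proves \eqref{conv_grad_phie}, and uniqueness of the limit upgrades the convergence to the full sequence. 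For the pathwise statement \eqref{conv phi^e} I fix a good $\omega$ (on which the ergodic averages hold), note that $\{\Pi_\ve^\omega\phi_\ve^e(t,\cdot)\}$ is bounded in $H^1(G)$ hence precompact in $L^4(G)$, and observe that every limit point equals $\phi_0^e$ by the identification above, so the whole sequence converges in $L^4(G)$. Uniformity in $t\in[0,T]$ follows by differentiating the problem in $t$ (the coefficient $\hat\sigma^e$ and the datum $B$ are $C^1$ in $t$) to bound $\partial_t\Pi_\ve^\omega\phi_\ve^e$ in $H^1(G)$ uniformly in $\ve$, giving equicontinuity of $t\mapsto\Pi_\ve^\omega\phi_\ve^e(t)$ in $L^4(G)$ and, with the pointwise-in-$t$ convergence, uniform convergence by an Arzel\`a--Ascoli argument. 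I expect the main obstacle to be the Minty identification of the nonlinear flux $\eta$ combined with the careful bookkeeping of the $\Sigma$-boundary and volume-fraction terms that makes the macroscopic limit reduce exactly to $\phi_0^e$; the pathwise ($\omega$-wise) strong convergence and the uniformity in $t$ are the secondary technical points.
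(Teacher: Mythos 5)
Your overall strategy coincides with the one the paper actually invokes: the paper's own proof of Lemma~\ref{lm:conv phi^e} is a one-line citation of the ``standard way'' (Lemma~4 in \cite{pettersson2023bidomain}) plus the monotonicity of $\hat\sigma^e(t,|\xi|)\xi$ from Lemma~\ref{lm:monotonicity}, and your reconstruction --- a priori bound and extension via Lemma~\ref{lm:extension-u^e}, two-scale compactness via Theorem~3.7 of \cite{bourgeat1994stochastic}, Minty identification of the nonlinear flux with oscillating potential test fields, the cell relation forcing $z$ to be the minimizer in \eqref{pot} so that \eqref{know_how} applies, and the cancellation engineered by the factor $\frac{1}{1-\Lambda}$ in \eqref{def phi^e_eq} and the Neumann datum \eqref{def phi^e_bc1}, which together with strict monotonicity of $\hat\sigma^e_{\rm hom}=\nabla\Phi$ forces the macroscopic limit to equal $\phi_0^e$ --- is exactly that argument written out. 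All of these identification steps are sound.

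One step, however, does not close as written: the deduction of \eqref{conv phi^e} from your identification. You identify the limit entirely within stochastic two-scale convergence \emph{in the mean}, i.e., in statements integrated $d{\mathcal P}$ over $\Omega$, but \eqref{conv phi^e} is a \emph{pathwise} claim: full-sequence strong $L^4(G)$ convergence for almost every fixed $\omega$, uniformly in $t$. Mean two-scale convergence is an averaged, weak statement and cannot be evaluated at a single realization, so ``every limit point equals $\phi_0^e$ by the identification above'' is not justified for fixed $\omega$; at best one extracts a subsequence converging a.s., not the full sequence. To repair this (and this is what the cited lemma of \cite{pettersson2023bidomain} does in the linear case), the Minty/energy argument must be rerun pathwise: choose a countable family of potential fields $\theta$ (dense by separability of $L^2(\Omega,{\mathcal P})$) and of deterministic test functions, use the Birkhoff ergodic theorem to obtain a single full-measure set of $\omega$ on which all realizations $\theta(T_{x'/\ve}\omega)$, the indicator $\chi_{G_\ve(\omega)}$ (cf.~\eqref{dens}), and the associated energies converge weakly, and then repeat your monotonicity inequality and energy identity for that fixed $\omega$ (the trace term on $\Sigma$ passes to the limit by the compact trace embedding of $H^1(G)$). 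This identifies every $L^4$-limit point of the pathwise precompact sequence $\Pi_\ve^\omega\phi_\ve^e$ as $\phi_0^e$, giving full-sequence pathwise convergence; your mean-framework computation then yields \eqref{conv_grad_phie} separately. Your Arzel\`a--Ascoli argument for uniformity in $t$ (differentiating the problem in $t$ and using \ref{H5} to control the linearized coefficient) is fine once the pointwise-in-$t$ pathwise convergence is secured in this way.
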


\begin{proof} The proof of \eqref{conv_grad_phie} can be done in the standard way (for example, as Lemma 4 in \cite{pettersson2023bidomain}) using additionally the monotonicity of 
the vector field $\hat \sigma^e(t,|\xi|)\xi$ (see also Lemma \ref{lm:monotonicity}).
\end{proof}	
\begin{corollary}
\label{cor:conv phi^e bound}
For the solution $\phi^e_\ve$ of \eqref{def phi^e_eq}--\eqref{def phi^e_bc2} the following strong  stochastic two-scale convergence on $M_\ve$ holds (see \eqref{def_2s_on_ax}), as $\ve\to 0$:
\begin{align}
	\label{conv phi^e bound}
	\Pi_\ve^\omega\phi^e_\ve\xrightarrow[]{2s}\phi^e_0 \ \  \mbox{strongly in}\ L^4(G_T; L^4(\Omega, \mu)).
\end{align}
\end{corollary}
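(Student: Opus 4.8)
The plan is to upgrade the strong $L^4(G)$ convergence \eqref{conv phi^e} of the extensions, already furnished by Lemma \ref{lm:conv phi^e}, to strong two-scale convergence on the membranes by transferring bulk control to the surfaces through the trace estimate of Lemma \ref{lm:apr est aux}. Writing $\psi_\ve:=\Pi_\ve^\omega\phi^e_\ve$, the central claim I would establish is that the difference tends to zero strongly on the surfaces,
\begin{align}
\label{eq:plan-surf-diff}
\ve\int_\Omega\int_0^T\int_{M_\ve(\omega)}|\psi_\ve-\phi^e_0|^4\,dS(x)\,dt\,d{\mathcal P}(\omega)\to 0\qquad\text{as }\ve\to0.
\end{align}
Granting \eqref{eq:plan-surf-diff}, the corollary follows quickly. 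Lemma \ref{eq:conv-test-func-surfaces} applied to the deterministic function $\phi^e_0$ with $q=4$ gives $\ve\int_\Omega\int_0^T\int_{M_\ve}|\phi^e_0|^4\to\int_\Omega\int_{G_T}|\phi^e_0|^4\,d\mu$; combining this with \eqref{eq:plan-surf-diff} via Minkowski's inequality for the measure $\ve\,dS\,dt\,d{\mathcal P}$ yields the norm convergence $\ve\int_\Omega\int_0^T\int_{M_\ve}|\psi_\ve|^4\to\int_\Omega\int_{G_T}|\phi^e_0|^4\,d\mu$ required in Definition \ref{def:strong-conv-surface}. For the weak two-scale convergence $\psi_\ve\xrightharpoonup[]{2s}\phi^e_0$ I would test against any $\varphi\in C(\bar G_T;L^{4/3}(\Omega,\mu))$ and split the membrane integral of $\psi_\ve\,\varphi(t,x,T_{x'/\ve}\omega)$: the part carrying $\psi_\ve-\phi^e_0$ vanishes by H\"older's inequality, \eqref{eq:plan-surf-diff} and the boundedness of $\ve\int_\Omega\int_0^T\int_{M_\ve}|\varphi|^{4/3}$ from Lemma \ref{eq:conv-test-func-surfaces}, while the part carrying $\phi^e_0\,\varphi$ converges to $\int_\Omega\int_{G_T}\phi^e_0\varphi\,d\mu$ by the change of variables and Campbell formula used in the proof of Lemma \ref{eq:conv-test-func-surfaces}.

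The core of the proof, and the step I expect to be the main obstacle, is the uniform bound
\begin{align}
\label{eq:plan-unif-H1}
\|\psi_\ve\|_{H^1(G)}\le C\qquad\text{for all }t\in[0,T]\text{ and a.a. }\omega\in\Omega,
\end{align}
with $C$ independent of $\ve,t,\omega$. It is indispensable because the estimate of Lemma \ref{lm:apr est aux} carries an $\ve$-weighted $H^1(G)$ term, so the factor $\ve$ annihilates that contribution only if the $H^1(G)$ norms are bounded uniformly; merely integrating the two-scale gradient bound \eqref{conv_grad_phie} would give $L^2$, not $L^4$, control in $(t,\omega)$ and would not suffice. To obtain \eqref{eq:plan-unif-H1} I would use the energy estimate for \eqref{def phi^e_eq}--\eqref{def phi^e_bc2}. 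Since $\phi^e_0\in C^2$ and $\sigma^e$ is bounded, the right-hand side $\frac{1}{1-\Lambda}\di{\hat\sigma^e_{\rm hom}(\nabla\phi^e_0)}$ is a bounded, $\omega$-independent function of $x$; testing the weak formulation with $\phi^e_\ve$ and using the uniform ellipticity $\hat\sigma^e\ge\underline{\sigma}$ from \ref{H5} bounds $\underline{\sigma}\|\nabla\phi^e_\ve\|^2_{L^2(G_\ve(\omega))}$ by this bulk source paired with $\phi^e_\ve$ plus the flux term $\int_\Sigma\hat\sigma^e_{\rm hom}(\nabla\phi^e_0)\cdot\nu\,\phi^e_\ve$, both of which are controlled by $\|\nabla\phi^e_\ve\|_{L^2(G_\ve(\omega))}$ through the trace inequality on the fixed boundary $\Sigma$ and Lemma \ref{lm:extension-u^e} (which also supplies the Poincar\'e control, as $\phi^e_\ve=0$ on $S_0\cup S_L$). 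Hence $\|\nabla\phi^e_\ve\|_{L^2(G_\ve(\omega))}\le C$, and Lemma \ref{lm:extension-u^e} upgrades this to \eqref{eq:plan-unif-H1} with a constant depending only on the deterministic ellipticity bounds and on $\phi^e_0$, hence uniform in $\omega$.

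Finally, with \eqref{eq:plan-unif-H1} available I would apply Lemma \ref{lm:apr est aux} to $u=\psi_\ve-\phi^e_0\in H^1(G)$ for each fixed $t,\omega$ and integrate over $(0,T)\times\Omega$ to get
\begin{align}
\label{eq:plan-apply-aux}
\ve\int_\Omega\int_0^T\int_{M_\ve(\omega)}|\psi_\ve-\phi^e_0|^4\le C\ve\int_\Omega\int_0^T\|\psi_\ve-\phi^e_0\|^4_{H^1(G)}+C\int_\Omega\int_0^T\|\psi_\ve-\phi^e_0\|^4_{L^4(G)}.
\end{align}
By \eqref{eq:plan-unif-H1} the first term on the right is $O(\ve)$ and vanishes. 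For the second term, \eqref{conv phi^e} gives $\|\psi_\ve-\phi^e_0\|_{L^4(G)}\to0$ for every $t$ and a.a. $\omega$, while \eqref{eq:plan-unif-H1} together with the embedding $H^1(G)\hookrightarrow L^4(G)$ supplies a constant, hence integrable, majorant on the finite measure space $[0,T]\times\Omega$; dominated convergence then sends the second term to zero, establishing \eqref{eq:plan-surf-diff} and completing the argument.
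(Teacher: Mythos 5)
Your proposal is correct and takes essentially the same approach as the paper: the paper's proof is exactly the application of Lemma \ref{lm:apr est aux} to $u=\Pi_\ve^\omega\phi^e_\ve-\phi^e_0$, with the factor $\ve$ absorbing the $H^1(G)$ term (via the uniform energy bound and Lemma \ref{lm:extension-u^e}) and the strong $L^4(G)$ convergence \eqref{conv phi^e}, obtained through the compact embedding $H^1(G)\subset L^4(G)$, killing the remaining term. The additional details you supply --- the uniform-in-$(t,\omega,\ve)$ $H^1$ estimate, the dominated convergence in $(t,\omega)$, and the verification of both parts of Definition \ref{def:strong-conv-surface} --- are precisely what the paper's one-line proof leaves implicit.
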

\begin{proof}
We get this result applying Lemma \ref{lm:apr est aux} with $u=\Pi_\ve^\omega\phi^e_\ve-\phi^e_0$ and using the compactness of the embedding $H^1(G)\subset L^4(G)$. 
\end{proof}

Next, let us proceed with the definition of the test function $\phi^i_\ve$. Taking an arbitrary bounded function $\phi^{i}_0(t,x,\omega)\in C^2(G_T, {\cal K(\A )})$ that equals zero on $S_0\cup S_L$, we define  test functions 
$\phi^{i}_\ve$ on each axon 
by setting
\begin{align}
\label{def phi^i}
\phi^{i}_\ve(t,x,\omega)=\phi^{i}_0(t, x_1, \ve x'_{j}(\omega) , T_{\frac{x'}{\ve}}\omega), \quad t\in [0,T], \ x\in 	F_{\ve j}(\omega).  
\end{align}
By construction \eqref{def phi^i} of $\phi^{i}_\ve$, we have the  strong  stochastic two-scale convergence (see \eqref{def_2s_on_ax}), as $\ve\to 0$:
\begin{align}
\phi^{i}_\ve \xrightarrow[]{2s}  \phi^{i}_0\ \  \mbox{strongly in}\   L^4(G_T; L^4(\Omega, {\mathcal P})). 
\end{align}
For brevity, in what follows, we omit arguments in the conductivity function and write $\hat \sigma_\ve(|\xi|)$ for $\hat \sigma_\ve(t,x, \omega, |\xi|)$. Thanks to the assumption \ref{H5},  it holds that
\begin{align}
		\label{equiv-monotonicity}
		(\hat\sigma_\ve(|\xi|)\xi - \hat\sigma_\ve(|\eta|)\eta)\cdot (\xi-\eta)\ge \underline{\sigma} |\xi-\eta|^2, \quad \forall \xi, \eta \in \mathbf R^3,
\end{align}
with the positive constant $\underline{\sigma}$ (see \ref{lm:monotonicity}). Using \eqref{equiv-monotonicity} for $\nabla \phi_\ve, \nabla \hat{u}_\ve$  we get  
\begin{align}
\label{ineq1}
&\int\limits_{F_\ve(\omega) \cup G_\ve(\omega)}(\hat\sigma_\ve(|\nabla \phi_\ve|)\nabla \phi_\ve - \hat\sigma_\ve(|\nabla \hat u_\ve|)\nabla \hat u_\ve)\cdot (\nabla \phi_\ve-\nabla \hat u_\ve) dx\notag\\
&=\int\limits_{F_\ve(\omega) \cup G_\ve(\omega)}\hat\sigma_\ve(|\nabla \phi_\ve|)\nabla \phi_\ve \cdot (\nabla \phi_\ve-\nabla \hat u_\ve) dx \notag\\
&\quad\quad \quad- (\hat{\mathcal L}_\ve(\hat v_\ve), [\phi_\ve-\hat u_\ve])_{L^2(M_\ve(\omega))} 	- \int \limits_{\Sigma} e^{-\lambda t}J_\ve^e (\phi^e_\ve-\hat u^e_\ve) \, dS(x)\\
&\ge \int\limits_{F_\ve(\omega) \cup G_\ve(\omega)}\underline{\sigma} |\nabla \phi_\ve-\nabla \hat u_\ve|^2dx.	\notag
\end{align}
Then \eqref{ineq1} implies that for a.a. $\omega\in\Omega$ and all $t\in[0,T]$, 
\begin{align}
&-\ve	\int\limits_0^T \int\limits_{M_\ve(\omega)}
\left( c_m \partial_t \hat v_\ve + \hat I_{\rm ion}(\hat v_\ve, \hat g_\ve) \right)
\left([\phi_\ve]-\hat v_\ve \right)dS(x)dt\notag\\
&\leq
\int\limits_0^T\int\limits_{F_\ve(\omega) \cup G_\ve(\omega)}\hat\sigma_\ve(|\nabla \phi_\ve|)\nabla \phi_\ve \cdot (\nabla \phi_\ve-\nabla \hat u_\ve) dxdt
 \notag\\
&-	\int \limits_{\Sigma} e^{-\lambda t}J_\ve^e (\phi^e_\ve-\hat u^e_\ve) \, dS(x)-\int\limits_0^T \int\limits_{F_\ve(\omega) \cup G_\ve(\omega)}\underline{\sigma} |\nabla \phi_\ve-\nabla \hat u_\ve|^2dxdt.
\label{70}
\end{align}

We write $\hat I_{\rm ion}(\hat v_\ve, \hat g_\ve)=\hat I_{\rm ion}(\hat v_\ve, \hat g_\ve)+ \hat I_{\rm ion}([\phi_\ve], \psi_\ve)-\hat I_{\rm ion}([\phi_\ve], \psi_\ve)$ and thanks to \eqref{lambda} obtain
\begin{align}
&\int\limits_0^T \int\limits_{M_\ve(\omega)}
	\left(\hat I_{\rm ion}(\hat v_\ve, \hat g_\ve) - \hat I_{\rm ion}([\phi_\ve], \psi_\ve) \right)
	\left(\hat v_\ve- [\phi_\ve] \right)dS(x)dt\notag\\
	&+ \int\limits_0^T \int\limits_{M_\ve(\omega)}
		\left( (b+\lambda) \hat g_\ve - \theta \hat v_\ve
-(b+\lambda) \psi_\ve + \theta [\phi_\ve]\right)\left(\hat g_\ve- \psi_\ve\right) dS(x)dt
\geq 0.
\label{72}
\end{align}
Multiplying the equation for $\hat{g}_\ve$ in \eqref{eq:orig-prob-nonlocal-hat} by $\ve \left(\hat g_\ve- \psi_\ve\right)$, 
replacing   $\hat v_{\ve}=\hat v_{\ve}+[\phi_\ve]-[\phi_\ve]$, $\hat g_{\ve}=\hat g_{\ve}+\psi_\ve-\psi_\ve$ in the right-hand side and integrating, we obtain
\begin{align}
	&\frac{\ve}{2} \int\limits_{M_\ve(\omega)}\hat g^2_\ve(T) \, dS(x)
	-\frac{\ve}{2} \int\limits_{M_\ve(\omega)} g^2_\ve(0) \, dS(x)-
	\ve\int\limits_0^T\int\limits_{M_\ve(\omega)} \partial_t\hat g_\ve  \, \psi_\ve\, dS(x)\, dt\nonumber\\
&  	+\ve\int\limits_0^T\int\limits_{M_\ve(\omega)}	\bigg(
	\theta [\phi_\ve] + a e^{-\lambda t}-(\lambda+b) \psi_\ve\bigg)
	(\psi_\ve-\hat g_\ve)   \, dS(x)\, dt	\label{73} \\   
&+	\ve\int\limits_0^T\int\limits_{M_\ve(\omega)}
	\bigg(
	\theta (\hat v_\ve -[\phi_\ve]) -(\lambda+b) (\hat g_\ve -\psi_\ve)\bigg)
	(\psi_\ve-\hat g_\ve)   \, dS(x)\, dt =0.
	\notag
\end{align}
Then \eqref{70}, \eqref{72} yield 
\begin{align}
	&\frac{\ve c_m}{2} \int\limits_{M_\ve(\omega)}\hat v^2_\ve(T) \, dS(x)
	+\frac{\ve}{2} \int\limits_{M_\ve(\omega)}\hat g^2_\ve(T) \, dS(x)
\nonumber\\
	-&\frac{\ve c_m}{2} \int\limits_{M_\ve(\omega)} v^2_\ve(0) \, dS(x)
	-\frac{\ve}{2} \int\limits_{M_\ve(\omega)} g^2_\ve(0) \, dS(x)\nonumber\\
	-&\ve c_m\int\limits_0^T\int\limits_{M_\ve} \partial_t\hat v_\ve  [\phi_\ve]\, dS(x)\, dt
	-\ve \int\limits_0^T\int\limits_{M_\ve(\omega)} \partial_t\hat g_\ve  \psi_\ve\, dS(x)\, dt\nonumber\\
	+&  \ve	\int\limits_0^T \int\limits_{M_\ve(\omega)}
	\left( \hat I_{\rm ion}([\phi_\ve], \psi_\ve) \right)
	\left(\hat v_\ve- [\phi_\ve] \right)dS(x)dt	\nonumber \\   
	+&\ve\int\limits_0^T\int\limits_{M_\ve(\omega)}
	\bigg(
	\theta [\phi_\ve] +ae^{-\lambda t}-(\lambda+b)\psi_\ve\bigg)
	(\psi_\ve-\hat g_\ve)   \, dS(x)\, dt \nonumber \\   
	\leq&	
	\int\limits_0^T\int\limits_{F_\ve(\omega) \cup G_\ve(\omega)}\hat\sigma_\ve(|\nabla \phi_\ve|)\nabla \phi_\ve \cdot (\nabla \phi_\ve-\nabla \hat u_\ve) dxdt \nonumber \\   
	-&\int\limits_0^T\int \limits_{\Sigma} e^{-\lambda t}J_\ve^e (\phi^e_\ve-\hat u^e_\ve) \, dS(x)dt-\int\limits_0^T \int\limits_{F_\ve(\omega) \cup G_\ve(\omega)}\underline{\sigma} |\nabla \phi_\ve-\nabla \hat u_\ve|^2dxdt.
	\label{almost_ready_forlimit}
\end{align}
Next, we integrate \eqref{almost_ready_forlimit} over $\Omega$ and pass to the limit as $\ve\to 0$. 

Due to the lower semi-continuity of $L^2$-norms of functions $\hat v_\ve, \hat g_\ve$ and taking into account \eqref{conv_vT}, \eqref{conv_gT}, we get:
\begin{align}
	& \liminf\limits_{\ve \to 0}\, 
	\ve	 \int\limits_{\Omega}\int\limits_{M_\ve(\omega)} \hat v^2_\ve(T) \, dx \, d{\mathcal P}(\omega)
	\ge\int\limits_{\Omega}\int\limits_{G} \hat v^2_0(T) \, dx\, d\mu(\omega),  \label{semi_v}	\\  
	& \liminf\limits_{\ve \to 0}\, 
	\ve	 \int\limits_{\Omega}\int\limits_{M_\ve(\omega)} \hat g^2_\ve (T)\, dx \, d{\mathcal P}(\omega)
	\ge\int\limits_{\Omega}\int\limits_{G} \hat g^2_0(T) \, dx\, d\mu(\omega). \label{semi_g}  
\end{align}
Further, we have the following convergence 
\begin{align}
\label{lim_ue}
\int\limits_0^T&\int\limits_\Omega\int\limits_{G_\ve(\omega)}  \hat\sigma^e(|\nabla \phi^e_\ve|) \nabla \phi^e_\ve\cdot\nabla (\phi^e_\ve- \hat u^e_\ve) \, dx\, 	d{\mathcal P}(\omega) \, dt\notag\\ 	&\rightarrow 
\int\limits_0^T\int\limits_G\hat\sigma_{\rm hom}(\nabla \phi_0^e)\cdot(\nabla \phi_0^e-\nabla \hat u_0^e) dx\, dt.
\end{align}
Indeed, multiplying \eqref{def phi^e_eq} by $(\phi^e_\ve-\hat u^e_\ve)$, integrating by parts, and using boundary conditions \eqref{def phi^e_mc}-\eqref{def phi^e_bc2} we obtain
\begin{align}
&\int\limits_{G_\ve(\omega)}  \hat\sigma^e(|\nabla \phi^e_\ve|) \nabla \phi^e_\ve\cdot\nabla (\phi^e_\ve-\hat u^e_\ve) \, dx 
= \int\limits_G\hat\sigma_{\rm hom}(\nabla \phi_0^e)\cdot(\nabla\Pi^\omega_\ve \phi_\ve^e-\nabla \Pi^\omega_\ve \hat u_\ve^e) dx\,\notag\\ 
&+
\int\limits_G\di{\hat\sigma_{\rm hom}(\nabla \phi_0^e)}
\left(1-\frac{1}{1-\Lambda}\chi_{_{G_\ve}}(x)
\right)\left(\Pi^\omega_\ve\phi^e_\ve-\Pi^\omega_\ve\hat u^e_\ve\right)dx.
\label{84}
\end{align}
Due to \eqref{conv_gradPue}, \eqref{conv_grad_phie}
\begin{align}
 &\int\limits_0^T\int\limits_\Omega\int\limits_G\hat\sigma_{\rm hom}(\nabla \phi_0^e)\cdot(\nabla\Pi^\omega_\ve \phi_\ve^e-\nabla \Pi^\omega_\ve\hat u_\ve^e) dx\, d{\mathcal P}(\omega) \, dt\notag\\ 
&\quad\rightarrow \int\limits_0^T
\int\limits_G \hat\sigma_{\rm hom}(\nabla \phi_0^e)\cdot(\nabla \phi_0^e-\nabla \hat u_0^e) dx\, dt.
\label{85}
\end{align}
Since $\chi_{_{G_\ve}}(x)$ converges strongly stochastically two-scale to $\chi_{\Omega\setminus \bar\A }(\omega)$ and in view of \eqref{conv phi^e}, \eqref{conv_Pue} we also have 
\begin{align}
&\int\limits_0^T\int\limits_\Omega\int\limits_G\di{\hat\sigma_{\rm hom}(\nabla \phi_0^e)}
\left(1-\frac{1}{1-\Lambda}\chi_{_{G_\ve}}(x)
\right)\left(\Pi^\omega_\ve\phi^e_\ve-\Pi^\omega_\ve\hat u^e_\ve\right)dx d{\mathcal P}(\omega) \, dt\notag\\ 
&\rightarrow \int\limits_0^T
\int\limits_G \hat\sigma_{\rm hom}(\nabla \phi_0^e)\cdot(\phi_0^e - \hat u_0^e) dx\, dt\int\limits_\Omega\left(1-\frac{1}{1-\Lambda}\chi_{\Omega\setminus \bar\A }(\omega)\right)d{\mathcal P}(\omega)=0,
\label{86}
\end{align}
thus \eqref{84} is established.

By the construction of the test function $\phi_\ve^i$, using \eqref{conv_duiG} and Lemma \ref{lemma_radii} we conclude that
\begin{align}
&\int\limits_0^T\int\limits_\Omega\int\limits_{F_\ve} 
\hat\sigma^i(|\nabla \phi^i_\ve|)\nabla\phi^{i}_\ve \cdot\nabla(\phi^{i}_\ve-\hat u^{i}_\ve) \, dx\, d{\mathcal P}(\omega) \, dt	\notag\\
&\quad\rightarrow \int\limits_0^T\int\limits_\A \int\limits_{G} \hat\sigma^i(|\nabla \phi^i_0|)\partial_{x_1}\phi^{i}_0  \,\partial_{x_1} (\phi^{i}_0-\hat u_0^{i}) \, dx\, d{\mathcal P}(\omega) \, dt	
\notag\\
&\quad=\frac{1}{2}\int\limits_0^T\int\limits_\Omega\int\limits_{G} r(\omega)\hat\sigma^i(|\nabla \phi^i_0|)\partial_{x_1}\phi^{i}_0  \,\partial_{x_1} (\phi^{i}_0-\hat u_0^{i}) \, dx\, d\mu(\omega) \, dt.	
\label{87}
\end{align}

Finally, pass to the limit in the first term of the last line of \eqref{almost_ready_forlimit}. To this end, consider a harmonic function 
$\Psi_\ve(t,\cdot) \in H^1(G)$ being a solution of the following problem:
\begin{align}
&\Delta \Psi_\ve (t,x)=0 & \text{in } &G, \notag\\
&\nabla \Psi_\ve(t,x)\cdot \nu = e^{-\lambda t} J_\ve^e & \text{on } &\Sigma, \label{Psi}\\
&\Psi_\ve(t,x)=0 & \text{on } &S_0\cup S_L. \notag
\end{align}
Using \ref{H8} one can show that  
\begin{align}
\nabla \Psi_\ve \to \nabla \Psi
 \quad \text{strongly in } L^2(G_T),
 \label{gradPsi}
\end{align}
where $\Psi$ solves \eqref{Psi} with  $e^{-\lambda t} J_0^e$ in place of 
$e^{-\lambda t} J_\ve^e$. 
Then we represent
\begin{align}
&\int\limits_0^T\int \limits_{\Sigma} e^{-\lambda t}J_\ve^e (\phi^e_\ve-\hat u^e_\ve) \, dS(x)\, dt\notag\\
&=\int\limits_0^T\int \limits_G   \Delta \Psi\left( \Pi^\omega_\ve \phi_\ve^e-\Pi^\omega_\ve\hat u_\ve^e\right)
dx\, dt
+\int\limits_0^T\int \limits_G \nabla \Psi\cdot \nabla\left( \Pi^\omega_\ve \phi_\ve^e- \Pi^\omega_\ve\hat u_\ve^e\right) dx\, dt,
\end{align}
and pass to the limit using \eqref{Psi}, \eqref{gradPsi}, \eqref{conv_gradPue}, \eqref{conv_grad_phie}:
\begin{align}
\int\limits_0^T\int \limits_{\Sigma} e^{-\lambda t}J_\ve^e (\phi^e_\ve-\hat u^e_\ve) \, dS(x)\, dt \to
\int\limits_0^T\int \limits_{\Sigma} e^{-\lambda t}J_0^e (\phi^e_0-\hat u^e_0) \, dS(x)\, dt.
\end{align} 

Summarizing, from \eqref{semi_v}, \eqref{semi_g}, \eqref{85}, \eqref{87} we pass to the limit in \eqref{almost_ready_forlimit} and obtain
\begin{align}
& c_m\int\limits_{\Omega}\int\limits_{G_T} \partial_t\hat v_0  
(\hat v_0 -[\phi_0])\, dx\, d\mu(\omega)\, dt
	+ \int\limits_{\Omega}\int\limits_{G_T} \partial_t\hat g_0  (\hat g_0-\psi)\, dx\, d\mu(\omega)\, dt\nonumber\\
&+ \int\limits_{\Omega}\int\limits_{G_T}
	 \hat I_{\rm ion}([\phi_0], \psi) 
	\left(\hat v_0- [\phi_0] \right)dx\, d\mu(\omega)\, dt	\nonumber \\   
	&+\int\limits_{\Omega}\int\limits_{G_T}
	\bigg(
	\theta [\phi_0] +ae^{-\lambda t}-(\lambda+b)\psi\bigg)
	(\psi-\hat g_0)   dx\, d\mu(\omega)\, dt \nonumber \\   
	&\leq	
 \int\limits_{G_T}\hat\sigma_{\rm hom}(\nabla \phi_0^e)\cdot(\nabla \phi_0^e-\nabla \hat u_0^e) dx\,  dt  
  \label{moneq: limit} 
  -\int\limits_0^T e^{-\lambda t}\int \limits_{\Sigma} J_0^e (\phi_0^e-\hat u_0^e) \, dS(x)\, dt\\ 
  &+ \frac{1}{2}\int\limits_\Omega\int\limits_{G_T} r(\omega)\hat\sigma^i(|\partial_{x_1} \phi^i_0|)\partial_{x_1}\phi^{i}_0  \,\partial_{x_1} (\phi^{i}_0-\hat u_0^{i}) \, dx\, d\mu(\omega) \, dt.	 
\notag
\end{align}
By density, \eqref{moneq: limit} holds for test functions
\begin{align*}
&\phi^{i}_0\in L^4(G_T; L^4(\Omega,\mu)\cap {\cal K(A)}):\,\,\partial_{x_1} \phi^{i}_0 \in L^2(G_T;{\cal K_\mu (A)}), \,\, \phi^{i}_0=0 \,\, \mbox{on} \,\, S_0\cup S_L,\\
&\phi^e_0\in L^4(G_T):\,\,\nabla \phi^e_0 
	\in \bigl(L^2(G_T)\bigr)^3, \,\, \phi^e_0=0 \,\, \mbox{on} \,\, S_0\cup S_L,\\
&\psi\in L^2(G_T;  L^2(\Omega,\mu)).
\end{align*}
Now let us take in \eqref{moneq: limit} 
test functions in the form
\begin{align*}
&\phi_0^e=\hat u_0^e + \delta \tilde \phi_0^e, \quad \phi_0^{i}=\hat u_0^{i} + \delta \tilde \phi_0^{i},\\
&\psi=\hat g_0 + \delta \tilde \psi, \quad 
[\phi_0]=\hat v_0 + \delta [\tilde \phi_0], 
\end{align*}
where $\delta\not= 0$ is a
small parameter, and $\tilde \phi_0^i, \tilde \phi_0^e, \tilde \psi$ are 
of the same classes as the functions $ \phi_0^i, \phi_0^e, \psi$. We obtain 
\begin{align*}
& -	\delta c_m\int\limits_{\Omega}\int\limits_{G_T} \partial_t\hat v_0  
[\tilde\phi_0]\, dx\, d\mu(\omega)\, dt
	- 	\delta\int\limits_{\Omega}\int\limits_{G_T} \partial_t\hat g_0 	\tilde\psi\, dx\, d\mu(\omega)\, dt\nonumber\\
& - 	\delta\int\limits_{\Omega}\int\limits_{G_T}
\biggl((c_m\lambda-1)(\hat v_0+\delta[\tilde \phi_0]) +\frac{e^{2\lambda t}}{3} \, \left(\hat v_0+\delta[\tilde \phi_0]\right)^3 -\hat g_0-\delta\tilde \psi\biggr) 
	[\tilde\phi_0]dx\, d\mu(\omega)\, dt	\nonumber \\
&+\delta\int\limits_{\Omega}\int\limits_{G_T}
	\bigg(
	\theta \left(\hat v_0+\delta[\tilde \phi_0]\right) +ae^{-\lambda t}-(\lambda+b)(\hat g_0 + \delta \tilde \psi))\bigg)
	\tilde\psi   dx\, d\mu(\omega)\, dt \nonumber \\ 
	&\leq	
 \delta \int\limits_{G_T}\hat\sigma_{\rm hom}(\nabla (\hat u_0^e + \delta \tilde \phi_0^e))\cdot\nabla \tilde\phi_0^e dx\,  dt\\ 
  &+ \frac{\delta}{2}\int\limits_\Omega\int\limits_{G_T} r(\omega)\hat\sigma^i(|\partial_{x_1} (\hat u_0^i + \delta \tilde \phi_0^i)|)\partial_{x_1}(\hat u_0^i + \delta \tilde \phi_0^i)|)  \,\partial_{x_1} \tilde\phi^{i}_0 \, dx\, d\mu(\omega) \, dt.	 
		\nonumber \\ 
	&-\delta\int\limits_0^T e^{-\lambda t}\int \limits_{\Sigma} J_0^e \tilde\phi_0^e \, dS(x)\, dt.
\notag
\end{align*}
Then, dividing the last inequality
by $\delta$ and passing to the limits, as $\delta\to \pm 0$, we get
\begin{align}
\label{weak homogenized}
&  c_m\int\limits_{\Omega}\int\limits_{G_T} \partial_t\hat v_0  
[\tilde\phi_0]\, dx\, d\mu(\omega)\, dt
	+\int\limits_{\Omega}\int\limits_{G_T} \partial_t\hat g_0 	\tilde\psi\, dx\, d\mu(\omega)\, dt\nonumber\\
& +\int\limits_{\Omega}\int\limits_{G_T}
\biggl((c_m\lambda-1)\hat v_0 +\frac{e^{2\lambda t}}{3} \, \hat v_0^3 -\hat g_0\biggr) 
	[\tilde\phi_0]\, dx\, d\mu(\omega)\, dt	\nonumber \\   
&-\int\limits_{\Omega}\int\limits_{G_T}
	\bigg(
	\theta \hat v_0 +ae^{-\lambda t}-(\lambda+b)\hat g_0\bigg)
	\tilde\psi   \, dx\, d\mu(\omega)\, dt \nonumber \\   
&+	
  \int\limits_{G_T}\hat\sigma_{\rm hom}(\nabla \hat u_0^e)\cdot\nabla \tilde\phi_0^e dx\,  dt
  \nonumber \\ 
  &+ \frac{1}{2}\int\limits_\Omega\int\limits_{G_T} r(\omega)\hat\sigma^i(|\partial_{x_1} \hat u_0^i|)\partial_{x_1}\hat u_0^i   \,\partial_{x_1} \tilde\phi^{i}_0 \, dx\, d\mu(\omega) \, dt	 
		\nonumber \\ 
	&-\int\limits_0^T e^{-\lambda t}\int \limits_{\Sigma} J_0^e \tilde\phi_0^e \, dS(x)\, dt =0
\end{align}
By taking  $\tilde \psi=0$ and $\tilde \phi_0^e=\tilde \phi_0^{i}=\tilde\varphi_0$ in \eqref{weak homogenized} (with an arbitrary $\tilde\varphi_0\in C^2(G_T)$ that equals zero on $S_0\cup S_L$),  we derive that for almost all $t\in [0,T]$,
the function $\hat u_0^e\in {\cal H}_e$ solves the following problem
\begin{align}
\label{hat_u^e}
&\frac{1}{2}\int_\Omega\int_{G} r(\omega)\hat \sigma^i\left(|\partial_{x_1} (\hat v_0+\hat u_0^e)|\right) \partial_{x_1} (\hat v_0+\hat u_0^e) \partial_{x_1} \varphi_0 \, dx\, d\mu(\omega) \notag \\
&
+\int_{G} \hat \sigma_{\rm hom}^e(\nabla \hat u_0^e)  \cdot \nabla \varphi_0 \, dx
- e^{-\lambda t}\int_{\Sigma} J_0^e \varphi_0 \, dS(x)=0, \quad \forall 
\varphi_0\in {\cal H}_e.
\end{align}
We introduce the operator $\hat {\mathcal L}_{\rm hom}(t, \cdot): {\cal H}_i \to {\cal H}^\prime_i$ by setting
\begin{align}
\label{def:oper-Lhom-hat}
\hat{\mathcal L}_{\rm hom}(t, \hat v)=-\frac{r(\omega)}{2}\partial_{x_1} 
\left( \hat \sigma^i\bigl(|\partial_{x_1}(\hat v+\hat u^e)|\bigr) \partial_{x_1}(\hat v+\hat u^e)\right),
\end{align}
and rewrite \eqref{weak homogenized} in the following form
\begin{align}
\label{weak-hom-hatL}
&  c_m\int\limits_{\Omega}\int\limits_{G_T} \partial_t\hat v_0  
[\tilde\phi_0]\, dx\, d\mu(\omega)\, dt
	+\int\limits_{\Omega}\int\limits_{G_T} \partial_t\hat g_0 	\tilde\psi\, dx\, d\mu(\omega)\, dt\nonumber\\
& +\int\limits_{\Omega}\int\limits_{G_T}
\biggl((c_m\lambda-1)\hat v_0 +\frac{e^{2\lambda t}}{3} \, \hat v_0^3 -\hat g_0\biggr) 
	[\tilde\phi_0]dx\, d\mu(\omega)\, dt\\   
&-\int\limits_{\Omega}\int\limits_{G_T}
	\bigg(
	\theta \hat v_0 +ae^{-\lambda t}-(\lambda+b)\hat g_0\bigg)
	\tilde\psi   dx\, d\mu(\omega)\, dt \nonumber \\   
&+	\int\limits_0^T(\hat {\mathcal L}_{\rm hom}(t, \hat v), [\tilde\phi_0])_{L^2(G; L^2(\cal M, \mu))}\, dt =0.\nonumber
\end{align}
Taking in \eqref{weak-hom-hatL} successively $\tilde \psi=0$ and $\tilde \phi_0^e=\tilde \phi_0^{i}=0$,  since  $\tilde \phi_0^{i,e} (t,x,\ \cdot \  )\in {\cal K(A)}$, we obtain
the weak formulation of the following homogenized problem in terms of $\hat v_0, \hat g_0$:
\begin{align}
\label{homogenized_hat}
& c_m\partial_t \hat v_0 
+\hat {\mathcal L}_{\rm hom}(t, \hat v_0)+ {\bf P}_{\cal K(\A )} \hat I_{\rm ion}(\hat v_0, \hat g_0)=0
&\mbox{in}\,\,  &G_T\times \cal M,\nonumber\\
& \partial_t\hat g_0 + (\lambda+b)\hat g_0 -\theta \hat v_0
-ae^{-\lambda t}=0 & \mbox{in} \ &G_T \times {\cal M},
\\
&\hat v_{0}(0,x) =V_0(x), \,\, \hat g_{0}(0,x,\omega)=G_0(x,\omega)\,\,&\mbox{in}\,\,  &G\times \cal M,\nonumber
\end{align}
where $\hat I_{\rm ion}$ is defined by \eqref{def:I_ion_hat}.

Performing the change of unknowns 
$ \hat u_0^e=e^{-\lambda t}u_0^e$, $\hat u_0^{i}=e^{-\lambda t}u_0^{i}$, $\hat v_0=e^{-\lambda t}v_0$, $\hat g_0=e^{-\lambda t}g_0$ in \eqref{homogenized_hat}, we arrive at the homogenized problem \eqref{eq:hom-prob}. Since the proof for $t=T$ can be repeated for all $t\in[0,T]$, it completes the proof of Theorem \ref{th:main-short}. \hfill $\square$


\subsection{Well-posedness of the limit problem}
\label{sec:existence-eff-prob}
To prove that the limit problem \eqref{eq:hom-prob} possesses a unique solution, we rewrite \eqref{homogenized_hat} in the following form
\begin{align}
	\label{eq:abstract-parabol-eff}
&\partial_t W_0+ B_1(t, W_0) + B_2(t, W_0)+ B_3(t, W_0)= F_0(t) \quad & \mbox{in} \ &G_T \times {\cal M}, \nonumber\\ 
	&W_0(0,x, \omega)=W_0^0(x,\omega) \quad & \mbox{in} \ &G \times {\cal M}, 
\end{align}
where
\begin{align*}
	W_0 &=\begin{pmatrix} \hat v_0\\[2mm] \hat g_0 \end{pmatrix}, \quad 
 W_0^0=\begin{pmatrix} V_0\\[2mm] G_0 \end{pmatrix},\\ 
	B_1(t, W_0)&:=\begin{pmatrix}
		\displaystyle \frac{1}{c_m}\hat {\mathcal L}_{\rm hom}(t, \hat v_0) -
		\frac{1}{c_m}\hat {\mathcal L}_{\rm hom}(t, 0)\\[4mm]
		0
		\end{pmatrix},\\
	B_2(t, W_0) &:=
	\begin{pmatrix}
		\displaystyle
		\frac{e^{2\lambda t}}{3c_m} \hat v_0^3 \\[4mm] 0
	\end{pmatrix},	\quad B_3(t, W_0):=\begin{pmatrix}
	\displaystyle  \big(\lambda -\frac{1}{c_m}\big)\hat v_0 - \frac{1}{c_m} {\bf P}_{\cal K(\A )} \hat g_0\\[4mm]
	(\lambda+b) \hat g_0 -\theta \hat v_0 
\end{pmatrix},\\
	F_0(t)& := \begin{pmatrix}
		\displaystyle
		-\frac{1}{c_m}\hat {\mathcal L}_{\rm hom}(t, 0) \\[2mm]
		ae^{-\lambda t} 
	\end{pmatrix}.
\end{align*}
Recall that  
\begin{align}
{\cal H}_i=\{ u^i\in L^2(G; {\cal K_\mu(\A )})\ : \partial_{x_1} u^i\in L^2(G; {\cal K_\mu(\A )}), \ u^i|_{S_0\cup S_L}=0 \},  \notag
\end{align}
with the norm given by
\begin{align*}
	\|u\|_{{\cal H}_i}^2 = \int\limits_\Omega \int_G |u|^2 \, dx\, d\mu(\omega)
	+ \int\limits_\Omega \int_G |\partial_{x_1} u|^2 \, dx\, d\mu(\omega).
\end{align*}
We also introduce the following functional spaces:
\begin{align*}
	{\cal H}&= L^2(G; {\cal K_\mu(\A )}) \times  L^2(G; L^2(\Omega,\mu)), \nonumber\\
	{\cal V}_1& ={\cal H}_i \times  L^2(G; L^2(\Omega,\mu)).\nonumber\\
	{\cal V}_2& =L^4(G; L^4(\Omega,\mu)\cap{\cal K(\A )}) \times   L^2(G; L^2(\Omega,\mu)).\nonumber
	\end{align*}
One can check that the operators $B_i$ are monotone and satisfy the conditions of 
Theorem 1.4 in \cite{lions1969quelques} and Remark 1.8 in Chapter 2 (see also Theorem 4.1 in \cite{showalter2013monotone}) which proves the existence of a unique solution to problem \eqref{eq:abstract-parabol-eff}.

\appendix




\section{Examples of random geometry}
\label{sec:examples}
\label{sec:examples_random_geometry}
{In Sections \ref{sec:example-without-shifts}, \ref{sec:example-with-shifts}, we present examples of a probability space $(\Omega, \sigma, \cal P)$, a dynamical system $T_{x'}$, and a random set $A(\omega)$ together with $\A $  providing randomly placed axons of a finite number of different radii. We refer to \cite{pettersson2023bidomain} for the case of identical randomly distributed axons. } 
\subsection{Randomly distributed disks of two different radii}
\label{sec:example-without-shifts}
In what follows, we will use the ergodicity of the random shifts.
\begin{lemma}[Random shifts of a periodic structure]
\label{ex:shift}
Given a sample space $\Omega =[0,1)^3$ with the Lebesgue measure and the Borel $\sigma$-algebra ${\mathcal F}$, define the shifts $T_x$ as follows:
		\begin{align}
        \label{eq:shifts}
			T_x \omega = \omega + x -[\omega +x], \quad \omega \in \Omega, \,\, x\in \mathbb R^3.
		\end{align}
Here $[x]$ is the floor of $x\in \mathbb R^3$ (the integer part of $x$). Then the group of transformations $T_x:\Omega \to \Omega$ is ergodic.
\end{lemma}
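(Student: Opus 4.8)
The plan is to identify $\Omega=[0,1)^3$ with the three-torus $\mathbb{T}^3=\mathbb{R}^3/\mathbb{Z}^3$, under which the maps \eqref{eq:shifts} become the standard translation action of $\mathbb{R}^3$ on $\mathbb{T}^3$, namely $T_x\omega=\omega+x\ (\mathrm{mod}\ \mathbb{Z}^3)$. Ergodicity is equivalent to the statement that every measurable set (equivalently, every $L^2$ function) invariant under all $T_x$ is trivial (constant) almost everywhere, and I will establish this by Fourier analysis on the torus, which is the most direct route.

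Let $F\in\mathcal{F}$ be translation invariant, i.e. $T_xF=F$ for all $x\in\mathbb{R}^3$, and consider its indicator $\chi_F\in L^2(\mathbb{T}^3)$; it lies in $L^2$ since $\mathcal{P}$ is a probability measure. Expanding $\chi_F(\omega)=\sum_{k\in\mathbb{Z}^3}c_k e^{2\pi i k\cdot\omega}$ with $c_k=\int_{\mathbb{T}^3}\chi_F(\omega)e^{-2\pi i k\cdot\omega}\,d\omega$, I will compute the Fourier coefficients of the translate $\omega\mapsto\chi_F(\omega+x)$. A change of variables shows that its $k$-th coefficient equals $e^{2\pi i k\cdot x}c_k$. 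On the other hand, the invariance $T_xF=F$ means $\chi_F(\omega+x)=\chi_F(\omega)$ for a.e. $\omega$ and each fixed $x$, so the two functions share identical Fourier coefficients.

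Comparing coefficients yields $(e^{2\pi i k\cdot x}-1)\,c_k=0$ for all $k\in\mathbb{Z}^3$ and all $x\in\mathbb{R}^3$. For any $k\neq 0$ one may choose $x$ with $k\cdot x\notin\mathbb{Z}$, which forces $c_k=0$; hence only the zeroth mode survives and $\chi_F\equiv c_0=\mathcal{P}(F)$ a.e. Since $\chi_F$ takes only the values $0$ and $1$, this forces $\mathcal{P}(F)\in\{0,1\}$, which is exactly the ergodicity claimed.

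I do not expect a genuine obstacle here: the only points needing a little care are the justification that $\chi_F\in L^2(\mathbb{T}^3)$ so that the Fourier expansion and the coefficient comparison are legitimate (immediate on a probability space), and the observation that invariance under the \emph{full continuous group} $\mathbb{R}^3$, rather than a single translation, is what makes the argument clean, since for each nonzero frequency $k$ there is a direction $x$ detecting it. An alternative route, if one prefers to avoid Fourier series, is to approximate $\chi_F$ in $L^2$ by continuous functions and exploit the strong continuity of the translation group on $L^2(\Omega,\mathcal{P})$ noted earlier, but the harmonic-analytic computation above is the shortest.
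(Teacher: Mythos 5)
Your proof is correct, and it takes a genuinely different route from the paper. The paper's proof proceeds in two stages: it first verifies the group property and measure preservation directly (checking the latter on the generating algebra of half-open cells), and then argues ergodicity geometrically, by taking a candidate invariant \emph{cell} $A=[a_1,b_1)\times[a_2,b_2)\times[a_3,b_3)$ and observing that for a shift $x$ with some non-integer component $x_1$, the projection of $T_xA$ onto the $x_1$-axis cannot coincide with $[a_1,b_1)$, so no nontrivial cell is invariant. Your argument instead identifies $\Omega=[0,1)^3$ with the torus $\mathbb{T}^3$ and runs the classical harmonic-analysis proof: invariance of $F$ under all translations forces $(e^{2\pi i k\cdot x}-1)c_k=0$ for every Fourier coefficient $c_k$ of $\chi_F$ and every $x\in\mathbb{R}^3$, hence $c_k=0$ for $k\neq 0$, so $\chi_F$ is a.e.\ constant and $\mathcal{P}(F)\in\{0,1\}$. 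What your approach buys is completeness: ergodicity is a statement about \emph{all} invariant measurable sets, and a general invariant set need not be a cell or a finite union of cells, so the paper's cell-based argument is really a sketch that would need an extra approximation step (approximating an arbitrary invariant set in measure by finite unions of cells and controlling the error) to be airtight; your Fourier comparison handles arbitrary measurable $F$ in one stroke. What the paper's approach buys is elementarity and intuition --- it makes visible \emph{why} non-integer shifts destroy invariance without invoking $L^2$ theory --- and it packages the group/measure-preservation checks, which your write-up delegates to the standard facts about torus translations (a legitimate move, since under the identification these are immediate). One small point worth making explicit in your version: the invariance $T_xF=F$ assumed in the paper is exact set equality, which immediately gives $\chi_F(\omega+x)=\chi_F(\omega)$ pointwise, so the a.e.\ equality you use is available with no further argument.
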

\begin{proof}
Note that for the integer shifts $x\in \mathbb Z^3$, the semigroup becomes the identity transformation $T_x \omega = \omega$.
\noindent
Clearly, $T_x$ satisfies the semigroup properties. Indeed, $T_0 \omega =0$, and
\begin{align*}
	&T_y T_x \omega = T_y(\omega + x -[\omega +x]) \\
	&= \omega + x -[\omega +x] + y - [\omega + x -[\omega +x] + y]\\
	&= \omega + x + y - [\omega + x + y] = T_{y+x}\omega.
\end{align*}
To check that the transformation is measure-preserving it is sufficient to do it for an arbitrary cell $[a_1,b_1)\times [a_2,b_2)\times[a_3,b_3) \subset [0,1)^3$ (generating algebra). This is clearly true because the Lebesgue measure is translation-invariant.
		
Let us check that $T_x$ is ergodic. Take any cell $A=[a_1,b_1)\times [a_2,b_2)\times[a_3,b_3) \subset [0,1)^3$ invariant with respect to the transformation $T_x$, for any $x\in \mathbb R^3$. If $x\in \mathbb Z^3$, any cell is an invariant set because, in this case, $T_x=\mathrm{Id}$. For non-integer $x$, the only invariant sets are the empty set and $[0,1)^3$. No set of the form $T_x A$ with at least one $x_i \notin \mathbb Z$ is invariant. Suppose that the first component $x_1$ is not aninteger. Then the projection of the image of $A$ under $T_{x}$ (which is a finite union of cells) on the $x_1$-axis does not coincide with $[a_1, b_1)$. Therefore, the only sets that are invariant for any $x\in \mathbb R^3$ are the empty set and $[0,1)^3$, and the group of transformations $T_x$ is ergodic. 
\end{proof}
	
		To construct a random structure with disks of two different radii, we start by defining a probability space $(\Omega,{\mathcal F},{{\mathcal P}})$.
		Let the sample space be $\Omega=\Omega'\times Y$, where $\Omega'={\lbrace{ 0,1,2 \rbrace}}^{\mathbb{Z}^2}$,	$Y=[0,1)^2$. {The elements of the sample space are $\omega=(\omega',\xi)$ describe a system of randomly independently distributed disks centered at the points  $\mathbb{Z}^2+(\frac{1}{2}, \frac{1}{2} )- \xi$.} 
		Here, $\omega'\in \Omega'$  determines the scattering of the disks 
		$$
		D_k(z)=\biggl\{x'\in \mathbb{R}^2: \left| x'-z- (1/2, 1/2) \right|<r_k \biggr\},
		\quad k=1,2,
		$$
		of two distinct radii $r_1<r_2< 1/2$ centered at the {nodes of the lattice} $\mathbb{Z}^2+(1/2, 1/2)$, and is given by
		\begin{align*}
			\omega'(z) = \begin{cases}
				\displaystyle
				0 \quad \mbox{if there is no disk centered at}\ z+(1/2, 1/2),\  \\ 
				1 \quad \mbox{if there is a disk}\quad D_1(z),\qquad\qquad\qquad\qquad\\ 
				2 \quad \mbox{if there is a disk}\quad D_2(z).\qquad\qquad\qquad\qquad
			\end{cases}
		\end{align*}
		{We assume that the probabilities of these independent events are positive, and they will be denoted by $p_0, p_1$, and $p_2$, respectively}, such that $p_0+p_1+p_2=1$. One can interpret $\omega'$ as coloring the cells of the grid $\mathbb Z^2$ in three different colors.
		The random shift of all the disks is given by $\xi\in Y=[0,1)^2$.
		
		In order to construct a $\sigma$-algebra  $\sigma^\prime\subset 2^{\Omega'}$ and a probability measure $d{\cal P^\prime}(\omega')$, for an arbitrary finite subset $C_N\subset \mathbb{Z}^2$, $C_N={\lbrace{ z_k;\, k=1,\ldots, N \rbrace}}$, we consider a cylindrical set $\tilde\omega_N^\prime=\{\omega^\prime: \,\, \omega^\prime(z_k)=\omega^\prime_k,\, k=1,\ldots, N\}$, with
		$\omega^\prime_k\in\{0,1,2\}$, and define its probability by 
		$$
		{\cal P^\prime}(\tilde\omega_N')=p_0^{N_0}p_1^{N_1}p_2^{N_2}, \quad \mbox{where}\,\, N_i= \#\{k:\, \omega^\prime_k=i\}.
		$$
		By the Kolmogorov extension theorem \cite{shiryaev2016probability}, this probability measure can be extended to $d{\cal P^\prime}$ with the $\sigma$-algebra $\sigma^\prime$ generated by cylindrical sets.

		We define then a $\sigma$-algebra on $\Omega$ by setting ${\mathcal F}=\sigma'\times \mathcal B$, with $\mathcal B$ being the Borel $\sigma$-algebra on the unit cell $Y=[0,1)^2$, and defining the probability measure on $\sigma$ as $d{\cal P}=d{\cal P^\prime} d\xi $, where $d\xi$ is the Lebesgue measure on $Y$.
		
		In the next step, we introduce a dynamical system  $T_{x'}:\Omega \rightarrow \Omega$ by setting
		\begin{align}
			\label{dynsys}
			T_{x'}\omega=\big(\omega'(z+[x'+\xi]), \, x'+\xi-[x'+\xi]\big),
		\end{align}
		where $[x' + \xi]$ is the integer part of $x' + \xi$. The mapping \eqref{dynsys} is a group of measure preserving transformations such that (i)--(iii) are satisfied. 
		Indeed, since $[x'+\xi]=x+[\xi]$ for $x\in \mathbb Z^2$, $T_{x'}$ satisfies the group property:
		\begin{align*}
			T_0\omega &= (\omega'(z), \xi),\\
			T_{y'}(T_{x'} \omega)&= T_{y'}\big(\omega'(\tilde z), \, \tilde \xi\big), \,\, \tilde z = z+[x'+\xi], \, \tilde \xi = x'+\xi-[x'+\xi],\\
			T_{y'}(T_{x'} \omega)&= \big(\omega'(\tilde z + [y'+\tilde \xi]), \, y'+\tilde \xi - [y' + \tilde \xi]\big)\\
			&= \big(\omega'(z + [y'+x'+\xi]), \, y'+x'+\xi - [y' + x'+\xi]\big)= T_{y'+x'}\omega.
		\end{align*}
		The measurability of $T_x\omega$ follows from the definition, as a composition of measurable maps.
		\begin{lemma}
			$T_{x'}:(\Omega,{\mathcal F},{{\mathcal P}}) \to (\Omega,{\mathcal F},{{\mathcal P}})$ given by \eqref{dynsys} is ergodic.
		\end{lemma}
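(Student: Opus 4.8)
The plan is to exploit the product structure $\Omega = \Omega' \times Y$ together with the two distinct regimes of the action $T_{x'}$: integer shifts act purely on the configuration $\omega'$ through the $\mathbb{Z}^2$-Bernoulli shift, while the fractional parts $\{x'+\xi\} = x'+\xi-[x'+\xi]$ drive a translation on the torus $Y=[0,1)^2$. Concretely, fix a translation-invariant set $F\in{\mathcal F}$, so that $T_{x'}F = F$ for every $x'\in\mathbb R^2$, and for $\xi\in Y$ introduce its slice $F_\xi=\{\omega'\in\Omega':(\omega',\xi)\in F\}$. By Fubini's theorem the map $h(\xi):={\mathcal P}'(F_\xi)$ is measurable and ${\mathcal P}(F)=\int_Y h(\xi)\,d\xi$, so it suffices to show that $h$ is a.e. equal to a constant lying in $\{0,1\}$.

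First I would treat the integer shifts. Writing $S_n\omega':=\omega'(\,\cdot\,+n)$ for the $\mathbb{Z}^2$-action on $\Omega'$, note that for $n\in\mathbb{Z}^2$ one has $[n+\xi]=n$ and $\{n+\xi\}=\xi$ for every $\xi\in Y$, whence $T_n(\omega',\xi)=(S_n\omega',\xi)$. Invariance of $F$ under $T_n$ therefore gives $F_\xi=S_n^{-1}F_\xi$ for every $\xi$; intersecting over the countably many $n\in\mathbb{Z}^2$, for a.e. $\xi$ the slice $F_\xi$ is invariant under the full $\mathbb{Z}^2$-Bernoulli shift. Since the product measure ${\mathcal P}'$ is ergodic (indeed mixing) for this shift, it follows that $h(\xi)={\mathcal P}'(F_\xi)\in\{0,1\}$ for a.e. $\xi$.

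Next I would use the fractional part of the action to transport the slices across the torus. For arbitrary $x'\in\mathbb R^2$ the identity $T_{x'}(\omega',\xi)=(S_{[x'+\xi]}\omega',\{x'+\xi\})$ together with invariance of $F$ yields $\omega'\in F_\xi\iff S_{[x'+\xi]}\omega'\in F_{\{x'+\xi\}}$, that is $F_\xi=S_{[x'+\xi]}^{-1}F_{\{x'+\xi\}}$. Because ${\mathcal P}'$ is $S$-invariant, taking measures gives $h(\xi)={\mathcal P}'(F_\xi)={\mathcal P}'(F_{\{x'+\xi\}})=h(\{x'+\xi\})$ for every $\xi$ and every $x'$. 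Thus $h$ is invariant under every translation of the torus $Y\cong\mathbb{T}^2$, which forces $h$ to be a.e. equal to a constant $c$; combining with $c\in\{0,1\}$ from the previous step and ${\mathcal P}(F)=\int_Y h\,d\xi=c$ completes the proof.

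The main obstacle is the careful bookkeeping of the splitting $x'+\xi=[x'+\xi]+\{x'+\xi\}$: one must verify that boundary crossings, where $[x'+\xi]$ jumps, are handled so that the configuration shift $S_{[x'+\xi]}$ precisely compensates the relabeling, ensuring the slice measure is genuinely transported by the torus translation alone. The only other points needing care are the measurability of $\xi\mapsto F_\xi$ and the ergodicity of the $\mathbb{Z}^2$-Bernoulli shift, both classical (the latter following, e.g., from the Kolmogorov zero--one law).
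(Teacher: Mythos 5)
Your proof is correct, but it follows a genuinely different route from the paper's. The paper argues by first asserting that any $T_{x'}$-invariant set $A$ must factor as a product $A'\times A''$ with $A'\subset\Omega'$ invariant under the configuration shifts $S'_{x'}$ and $A''\subset Y$ invariant under the torus translations $S''_{x'}$; it then handles the two factors separately, citing its earlier lemma for the ergodicity of torus translations and proving the ergodicity of the $\mathbb{Z}^2$-shift on $\Omega'$ explicitly via approximation by cylinder sets and independence (the identity ${\mathcal P}'(A'_\delta\cap S'_{x_0}A'_\delta)={\mathcal P}'(A'_\delta)^2$ for a large integer shift $x_0$). Your Fubini/slicing argument --- slicing $F$ over $\xi$, using the integer shifts $T_n(\omega',\xi)=(S_n\omega',\xi)$ to show each slice has ${\mathcal P}'$-measure $0$ or $1$, and then using the identity $F_\xi=S_{[x'+\xi]}^{-1}F_{\{x'+\xi\}}$ with $S$-invariance of ${\mathcal P}'$ to show $h(\xi)={\mathcal P}'(F_\xi)$ is translation-invariant on the torus, hence a.e.\ constant --- avoids the paper's product-form assertion entirely. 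This is a real advantage: the claim that an invariant set in a product space must itself be a product set is not justified in the paper and is false for general invariant sets, and moreover ergodicity of a product of two ergodic actions is not automatic, so the paper's final "product measure" step also glosses over why the combination works; your disintegration makes exactly this point rigorous (it is, in effect, the standard argument that a mixing factor times an ergodic factor is ergodic). The trade-off is that you invoke ergodicity of the $\mathbb{Z}^2$-Bernoulli shift as classical (Kolmogorov zero--one law), whereas the paper proves that ingredient from scratch with its cylinder-set computation; if self-containedness is desired, you could splice the paper's independence argument into your step on the slices $F_\xi$.
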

		\begin{proof}
			Take any set $A\subset {\mathcal F}$ invariant with respect to $T_x$, that is $T_x A = A$ for any $x\in \mathbb R^3$. From \eqref{dynsys} we see that
			\begin{align*}
				T_{x'}\omega &= \big(S_{[x'+\xi]}'\omega', S_{x'}''\xi\big), \\
				S_{[x'+\xi]}'\omega' &= \omega'(\cdot + [x'+\xi]), \quad
				S_{x'}''\xi = x'+\xi -[x'+\xi].
			\end{align*}
			One can see that both shifts $S_{x'}':(\Omega', {\mathcal F}, {\mathcal P})\to (\Omega',{\mathcal F}, {\mathcal P})$ and $S_{x'}'': ([0,1)^2, {\mathcal B}, \lambda) \to ([0,1)^2, {\mathcal B}, \lambda)$ are measure preserving groups of transformations, $x'\in \mathbb R^2$. If $A \subset {\mathcal F}$ is $T_{x'}$-invariant, then 
			\begin{align*}
				A &=\{\omega=(\omega', \xi): \, \omega \in A', \, \xi \in A''\},\\
				T_{x'}A &= A \quad \Rightarrow \quad S_{x'}'A'= A', \,\, S_{x'}''A''= A''.
			\end{align*}
			Since the measure $d{\mathcal P}=d{\mathcal P}'\times d\lambda$ is a product measure, a set $A$ of pairs $(\omega', \xi)$ is invariant if $A'$ and $A''$ are invariant with respect to the corresponding shifts. 
			We have already proved that the shifts of the periodic structure are ergodic (see Example \ref{ex:shift}), so $S_{x'}''$ is ergodic. As for $S_{x'}'$, we need to prove the ergodicity only for integer $x'$.  
			Let us show that $S_{x'}'$ is ergodic for $x'\in \mathbb Z^2$.
			Take an invariant w.r.t $S_{x'}'$ set $A'$, and suppose that ${\mathcal P}'(A')$ is not zero or one. There exists a cylinder set $A_\delta'$ approximating $A'$, that is for any $\delta<0$, 
			\begin{align}
				\label{eq:inv-0}
				{\mathcal P}(A'\Delta A_\delta') \le \delta, \quad {\mathcal P}(S_{x'}' A'\Delta S_{x'}' A_\delta') \le \delta,
			\end{align} 
			where we have used the measure preserving property of  $S_{x'}'$.  
			Now, since $A'$ is $S_{x'}'$-invariant, 
			\begin{align}
				\label{eq:inv-1}
				{\mathcal P}'(A' \cap S_{x'}'A') = {\mathcal P}'(A').
			\end{align}
			At the same time, there exists (maybe large) $x_0 \in \mathbb Z^2$ such that the cylinder sets $A_\delta'$ and $S_{x_0}' A_\delta'$ are supported by the finite sets of points in $\mathbb Z^2$ that do not intersect. The events $A_\delta'$ and $S_{x_0}' A_\delta'$ have the same measure since the number of disks with different radii is not affected by a shift. That implies
			\begin{align}
				\label{eq:inv-2}
				{\mathcal P}'(A_\delta'\cap S_{x'}' A_\delta')= {\mathcal P}'(A_\delta')^2.
			\end{align}
			Combining \eqref{eq:inv-0}--\eqref{eq:inv-2} we get 
			\begin{align*}
				{\mathcal P}'(A')={\mathcal P}'(A_\delta')^2 + O(\delta), \, \delta \to 0 \quad \Rightarrow \quad {\mathcal P}'(A')=0 \ \mbox{or} \ 1.
			\end{align*}
			Since ${\mathcal P}$ is a product measure, the ergodicity of $S_{x'}'$ and $S_{x'}''$ implies the ergodicity of $T_x$ with respect to ${\mathcal P}$. 
		\end{proof}
		Next proposition specifies the prototypes $\A _k$, $k=1, 2$ of the random sets $A_k(\omega)$.
\begin{lemma}
\label{lm:random-sets}
Let $A(\omega)$ be a collection of disks of two different radii $r_1, r_2$:
$$
A(\omega)=\bigcup_{k=1}^2\bigcup_{j\in \mathbb{Z}^2: \,\omega'(j)=k} (D_k(j)-\xi), \quad 
D_k(j) = \{x': |x' - j - (1/2, 1/2)| \le r_k\}.
$$
Then
\begin{align}
\label{eq:cal(D)}
	\A _k=\lbrace{ \omega\in\Omega: \omega'(0)=k, \, \xi \in D_k(0) \rbrace},\, k=1,2.
\end{align}

		\end{lemma}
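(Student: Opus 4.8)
The plan is to unwind the definition of the prototype set and reduce the membership condition $0\in A_k(\omega)$ to an elementary geometric statement about a single lattice cell. Recall that, by definition, $\A_k=\{\omega\in\Omega:0\in A_k(\omega)\}$, where $A_k(\omega)=\bigcup_{j\in\mathbb{Z}^2:\,\omega'(j)=k}(D_k(j)-\xi)$ is the union of the radius-$r_k$ disks. First I would translate the condition $0\in A_k(\omega)$ into a condition on $\xi$: since $0\in D_k(j)-\xi$ is equivalent to $\xi\in D_k(j)$, we have $0\in A_k(\omega)$ if and only if there exists $j\in\mathbb{Z}^2$ with $\omega'(j)=k$ and $|\xi-j-(\tfrac12,\tfrac12)|\le r_k$.

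The crux of the argument is then to show that for $\xi\in Y=[0,1)^2$ the \emph{only} index $j$ that can satisfy this inequality is $j=0$. I would use the hypothesis $r_k<\tfrac12$ to observe that the closed disk $D_k(j)$ of radius $r_k$ centered at $j+(\tfrac12,\tfrac12)$ is contained in the closed box $[j_1+\tfrac12-r_k,\,j_1+\tfrac12+r_k]\times[j_2+\tfrac12-r_k,\,j_2+\tfrac12+r_k]$, which in turn lies in the open unit cell $(j_1,j_1+1)\times(j_2,j_2+1)$. For any $j\neq 0$ this open cell is disjoint from the half-open cell $[0,1)^2$, so $D_k(j)\cap Y=\emptyset$; consequently no $\xi\in Y$ can lie in $D_k(j)$ for $j\neq 0$. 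The only genuine subtlety here is the half-open nature of $Y$, which must be checked componentwise (for instance, a cell with $j_1\ge 1$ has all its points with first coordinate $>1>\xi_1$, and a cell with $j_1\le -1$ has all points with first coordinate $<0\le\xi_1$); this is the step I expect to require the most care, though it remains routine.

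Combining these two observations yields the claim: $0\in A_k(\omega)$ holds precisely when $\omega'(0)=k$ and $\xi\in D_k(0)=\{x':|x'-(\tfrac12,\tfrac12)|\le r_k\}$, which is exactly \eqref{eq:cal(D)}. I would remark in passing that $D_k(0)\subset(0,1)^2\subset Y$, so the event $\{\xi\in D_k(0)\}$ is nontrivial and measurable, and that the two prototypes $\A_1,\A_2$ are disjoint since the values $\omega'(0)=1$ and $\omega'(0)=2$ are mutually exclusive, consistent with the disks of distinct radii forming disjoint families.
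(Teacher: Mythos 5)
Your proof is correct, and it does suffice for the statement as written: since $\A_k$ is by definition the prototype $\{\omega\in\Omega:\,0\in A_k(\omega)\}$ of the random set $A_k(\omega)=\bigcup_{j:\,\omega'(j)=k}(D_k(j)-\xi)$, checking membership of the origin is exactly what is required, and your cell-pinning observation (each $D_k(j)$, having radius $r_k<1/2$, lies strictly inside the open unit cell with corner $j$, so for $\xi\in Y=[0,1)^2$ only $j=0$ is possible) is the heart of the matter. The paper takes a slightly stronger route: it works with an arbitrary point $x'\in\mathbb R^2$ and the translation group \eqref{dynsys}, noting that $x'\in D_k(j)-\xi$ forces $[x'+\xi]=j$, whence $T_{x'}\omega=(\omega'(\cdot+j),\,x'+\xi-j)$ lies in the candidate set $\A_k$, and conversely; in other words, it verifies the full realization identity $A_k(\omega)=\{x':\,T_{x'}\omega\in\A_k\}$ (the analogue of \eqref{def:cal-A}), of which your claim is the special case $x'=0$ (recall $T_0=\mathrm{Id}$). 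What the paper's version buys is the consistency of $\A_k$ with the dynamical system, i.e.\ the \ref{H4}-type property that is actually used afterwards: the density computation \eqref{eq:densities} via Birkhoff's theorem and the Palm-measure formulas all rely on realizations $\chi_{\A_k}(T_{x'/\ve}\omega)$ reproducing the set $A_k(\omega)$. What your version buys is economy: no reference to $T_{x'}$ at all, only elementary geometry of one lattice cell, and your componentwise treatment of the half-open cell $Y$ is careful and correct. If you keep your route, it is worth adding the one-line remark that the same cell-pinning argument applied to $x'+\xi$ instead of $\xi$ yields the realization identity for general $x'$, so that the constructed prototype is indeed compatible with the translations.
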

		\begin{proof}
A given point $x'\in \mathbb R^2$ either belongs or not to a disk $D_k(j)-\xi$ for some $k$ and $j$. Suppose first that $x'\in (D_k(j)-\xi)$. Then $(x'+ \xi) \in D_k(j)$, and $[x'+\xi]=j$.
Then
\begin{align*}
		T_{x'} \omega = (\omega'(z+j), x' + \xi - j) \in \A _k 
\end{align*}
since $\omega'(j)=k$ and $z\in \mathbb Z^2$.
			
If $x$ does not belong to $D_k(j)-\xi$ for any $j, k$, a similar argument leads to $T_{x'}\omega \notin \A _k$, that proves Lemma \ref{lm:random-sets}.
		\end{proof}
The construction in Lemma \ref{lm:random-sets} gives a random placement of disks in $\mathbb R^2$. Figure \ref{fig:random1} illustrates this example for specific $p_0, p_1$, and $p_2$.
		
Note that due to \eqref{eq:cal(D)}, the limit densities \eqref{dens} can be computed explicitly
\begin{align}
	\label{eq:densities}
	{\cal P}(\A _k)= \pi r_k^2 p_k,
\end{align} 
since the probability to have a disk of radius $r_k$ at $z=0$ is $p_k$, and the Lebesgue measure of $D_0^k$ is $\pi r_k^2$.

To write down the limit problem \eqref{eq:hom-prob} in the example under consideration, we start by characterizing the Palm measure and computing the projection on $\mathcal K(\A)$. Recall Lemma \ref{lemma_radii} and take $f=1$ on $\bar{\A_k}$ and zero otherwise. Equality \eqref{eq:formulka} takes the form
\begin{align*}
\mathcal P (\A_k) = \frac{r_k}{2} \int \limits_{\bar{\A_k}} d\mu(\omega),
\end{align*}
that yields $\mu(\A_k) = 2\pi r_k p_k$.
We also introduce components of the prototype set $\mathcal M$
\begin{align}
\label{eq:prototype-M}
\mathcal M = \mathcal M_1 \cup \mathcal M_2, \,\, \mathcal M_k = \lbrace{ \omega\in\Omega: \omega'(0)=k, \,\, \xi \in \partial D_k(0)\rbrace}, \, k=1, 2.
\end{align}
Next, we compute the projection of the limit function $g_0(x,\omega)$ on $\mathcal K(\A)$ in $L^2(\Omega, \mu)$. Choosing an orthonormal basis in $\mathcal K(\A)$,
$e_k = \chi_{\mathcal M_k}(\omega)/\sqrt{\mu(\mathcal M_k)}, \, k=1,2$, the projection takes the form
\begin{align*}
\mathbf P_{\mathcal K(\A)} g_0(t, x, \omega)
= \sum_{k=1}^2 \frac{\chi_{\mathcal M_k}}{\mu(\mathcal M_k)} \int \limits_{\mathcal M_k} g_0(t, x, \omega) \, d\mu(\omega).
\end{align*}
The Palm measure $\mu(\omega)$ is
\begin{align*}
d\mu(\omega)
= \sum_{k=1}^2\delta(\omega'(0) - k) \delta(|\xi|-r_k)\, d \mathcal P'(\omega')ds_\xi,
\end{align*}
where $ds_\xi$ denote the arc length measure on $\partial D_{r_k}(0)$.
By the definition of $\mathcal M_k$ \eqref{eq:prototype-M}, 
\begin{align}
\mathbf P_{\mathcal K(\A)} g_0(t, x, \omega)
= \sum_{k=1}^2 \frac{\delta(\omega'(0) - k) \delta(|\xi|-r_k)}{\mu(\mathcal M_k)}
\int \limits_{\omega'(0)=k} \int \limits_{\partial D_{r_k}(0)} g_0(t, x, \omega', \xi)\, d\mathcal P '(\omega')ds_\xi.
\end{align}
Thus,
\begin{align}
\mathbf P_{\mathcal K(\A)} g_0(t, x, \omega)\big|_{\omega \in \mathcal M_k} = g_0^k(t, x) = 
\frac{1}{2\pi r_k p_k}
\int \limits_{\omega'(0)=k} \int \limits_{\partial D_{r_k}(0)} g_0(t, x, \omega', \xi)\, d\mathcal P '(\omega') ds_\xi.
\end{align}
Denoting $u_0^i(t, x, \omega)\big|_{\omega\in \mathcal M_k}=u_0^{ik}(t,x)$, we obtain the following limit problem
\begin{align}
	\label{eq:hom-prob-k}
	& c_m\partial_t v_{0}^k + I_{ion}(v_{0}^k, g_{0}^k)  = \frac{r_k}{2} \partial_{x_1}(\sigma^i(|\partial_{x_1} u^{ik}_0|))  & &\mbox{in}\, G_T,\nonumber\\
	& \sum_{k=1}^2 \mathcal P (\A_k) (c_m\partial_t v_{0}^k + I_{ion}(v_{0}^k, g_{0}^k))  = -\di{\sigma_{hom}^e(\nabla u_0^e)}  & &\mbox{in}\, G_T,\nonumber\\
	&\partial_t g_{0}  = \theta v_{0}+a - b\, g_{0},  & &\mbox{in}\, G_T \times \mathcal M\\
	&v_{0}^k=u_{0}^{ik} - u_0^e & &\mbox{in}\, G,\ \nonumber\\ 
	&u_0^e = u_{0}^{ik}=0& &\mbox{on}\, S_0\cup S_L,\nonumber\\
	&\sigma_{hom}^e(\nabla u_0^e) \cdot \nu = J_0^e &&\mbox{on}\,  \Sigma,\nonumber\\
	&v_{0}^k(0,x) =V_0(x), \,\, g_{0}(0,x,\omega)=G_0(x, \omega)  
	\hskip -0.5cm &  &\mbox{in}\, G\times \mathcal M.\nonumber
\end{align}

\begin{figure}[htbp]
\centering
\includegraphics[clip, trim=7cm 10.5cm 6cm 10.5cm, width=0.35\textwidth]{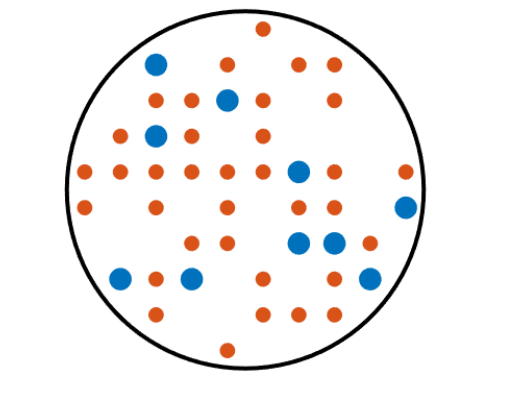}
\caption{Random distribution of discs in Section \ref{sec:example-without-shifts}: $p_0=1/3$ (no disk), $p_1=1/2$ (orange disk with radius $r_1=0.2$), $p_2=1/6$ (blue disk with radius $r_2 = 0.3$).} 
\label{fig:random1}
\end{figure}
		
\subsection{Disks with a finite number of different radii and random shifts}
\label{sec:example-with-shifts}
With small modifications, we generalize the example in the previous section and include random independent shifts inside the grid. 
	Let $R<1/2$, and let {$\eta=(\eta_1, \eta_2, \ldots)$} be uniformly distributed random variables with values in $B_{1/2-R}(0)$ (ball of radius $1/2-R$ centered at the origin).
		
		We introduce a sample space $\Omega=\Omega'\times \Omega''\times Y$, where $\Omega'$ will determine the scattering of the disks of $N$ different radii (equivalently, the coloring of the cells in the periodic grid in $N+1$ colors); $\Omega''$ represents the random shifts of disks inside the cells, and	$Y=[0,1)^2$ is an auxiliary shift of the whole structure to ensure that the group property is satisfied. 
		
		{The elements $\omega'$ of the sample space $\Omega'$ are functions from $\mathbb Z^2$ to $\{0, 1, \ldots, N\}$, and describe a system of randomly independently distributed disks centered at the points  $\mathbb{Z}^2+(\frac{1}{2}, \frac{1}{2} )- \xi$. } 
		As before, we denote the disks by
		$$
		D_k(z)=\biggl\{x'\in \mathbb{R}^2: \left| x'-z- (1/2, 1/2) \right|<r_k \biggr\},
		\quad k=1,\ldots, N. 
		$$
		The coloring is defined by 
		\begin{align*}
			\omega'(z) = \begin{cases}
				\displaystyle
				0 \quad \mbox{if there is no disk centered at}\ z+(1/2, 1/2),\  \\ 
				k \quad \mbox{if there is a disk}\quad D_k(z), \,\, k=1, \ldots, N.\qquad\qquad\qquad\qquad
			\end{cases}
		\end{align*}
		We assume that the probabilities of these independent events are positive, and they will be denoted by $p_k$, $k=1, \ldots, N$, such that $\sum_{k=1}^N p_k=1$. 
		Next, the random shifts of each disk are represented by the elements of $\Omega'': \mathbb Z^2 \to B_{1/2-R}(0)$. 
		The random shift of all the disks is given by $\xi\in Y=[0,1)^2$.
		
		The sample space is then the product $\Omega = \Omega'\times \Omega'' \times Y$. 
		The sigma-algebra and a probability measure for $\Omega'$ are constructed in the previous example. 
		
		To use the Kolmogorov theorem and construct a sigma-algebra and a probability measure for $\Omega''$, we introduce the cylinder sets. For an arbitrary finite number of points on the grid $\{j_1, j_2, \ldots, j_N\}$, we consider a cylindrical set 
		\begin{align*}
			C_N (I_{j_1}, \ldots, I_{j_N}) = \{\eta_{j_1}(\omega) \in I_{j_1}, \ldots, \eta_{j_N}(\omega) \in I_{j_N}\}, \quad I_{j_k} \subset B_{1/2-R}(0),
		\end{align*}
		and define its probability by 
		$$
		{\cal P''}\{\eta_{j_1}(\omega) \in I_{j_1}, \ldots, \eta_{j_1}(\omega) \in I_{j_N}\} = \Pi_{k=1}^N \frac{|I_{j_k}|}{|B_{1/2-R}(0)|}.
		$$
		By the Kolmogorov extension theorem, ${\mathcal P}''$ can be extended to $d{{\mathcal P}''}$ with the $\sigma$-algebra $\sigma''$ generated by cylindrical sets.

		We define then a $\sigma$-algebra on $\Omega$ by setting ${\mathcal F}=\sigma'\times \sigma''\times \mathcal B$, with $\mathcal B$ being the Borel $\sigma$-algebra on the unit cell $Y=[0,1)^2$, and the probability measure on $\sigma$ as $d{\cal P}=d{\cal P^\prime} \, d{\mathcal P}'' \,d\xi $, where $d\xi$ is the Lebesgue measure on $Y$.
		
		Finally, we introduce a dynamical system  $T_{x'}:\Omega \rightarrow \Omega$:
		\begin{align}
			\label{dynsys-2}
			T_{x'}\omega=\big(\omega'(z+[x'+\xi]), \,
			\eta(z+[x'+\xi]), \, x'+\xi-[x'+\xi]\big),
		\end{align}
		where $[x' + \xi]$ is the integer part of $x' + \xi$. The mapping \eqref{dynsys-2} is a group of measure preserving transformations such that (i)--(iii) are satisfied, and $T_{x'}$ is ergodic. 
		For
		\begin{align}
			\label{eq:cal(D)-2}
			\A = \bigcup_{k=1}^N \A_k = \bigcup_{k=1}^N\lbrace{ \omega\in\Omega: \omega'(0)=k, \,\,
				\eta(0)\in B_{1/2 - R}(0), \,\, \xi \in D_k(0) + \eta(0)\rbrace},
		\end{align}
		we obtain a collection of disks of $N$ different radii $r_k$, $k=1, \ldots, N$:
		$$
	A(\omega)=\bigcup_{k=1}^N\bigcup_{j\in \mathbb{Z}^2: \,\omega'(j)=k} (D_k(j)+\eta(j)-\xi), \, 
		D_k(j) = \{\zeta: |\zeta - j - (1/2, 1/2)| \le r_k\}.
		$$
		Using \eqref{eq:cal(D)-2}, one can see that the limit densities are the same as in the previous example:
		$
		{\cal P}({\cal A}_k)= \pi r_k^2 p_k$.

Among other examples of random geometry are those involving Voronoi tesselation structures (see, e.g., Example 4 in \cite{bourgeat2003double}, and Example 3 in \cite{mikelic2023homogenization}). 

\subsection{Monotonicity of the nonlinear terms}
\begin{lemma}
\label{lm:monotonicity}
Let $\sigma(\eta)$ satisfy \ref{H5}, that is
\begin{align*}
0<\underline \sigma \le \sigma(\eta)\le \overline{\sigma}, \quad \eta \frac{d}{d\eta}\sigma(\eta) + \eta >0.
\end{align*}
Then for all $\xi, \eta \in \mathbf R^3$ it holds
\begin{align}
\label{eq:equiv-monotonicity}
(\sigma(|\xi|)\xi - \sigma(|\eta|)\eta)\cdot (\xi-\eta)\ge \underline{\sigma} |\xi-\eta|^2,
\end{align}
with $\underline{\sigma}$ in assumption \ref{H5}.
\end{lemma}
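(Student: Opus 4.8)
The plan is to exhibit the vector field $F(\xi)=\sigma(|\xi|)\xi$ as the gradient-type nonlinearity at hand and prove its strong monotonicity by integrating the Jacobian along the segment joining $\eta$ to $\xi$. First I would record, for $\zeta(t)=\eta+t(\xi-\eta)$ and $v=\xi-\eta$, the identity
\[
(F(\xi)-F(\eta))\cdot(\xi-\eta)=\int_0^1 v^{T}\,DF(\zeta(t))\,v\,dt,
\]
which reduces the claim to a pointwise lower bound on the quadratic form $v^{T}DF(\zeta)v$. This is the natural route since it converts the nonlinear statement into a one-parameter family of linear-algebra estimates.

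Next, for $\zeta\neq 0$ I would compute $\partial_{\zeta_j}\bigl(\sigma(|\zeta|)\zeta_i\bigr)=\sigma(|\zeta|)\delta_{ij}+\sigma'(|\zeta|)\,\zeta_i\zeta_j/|\zeta|$, so that
\[
v^{T}DF(\zeta)v=\sigma(|\zeta|)\,|v|^2+\sigma'(|\zeta|)\,\frac{(\zeta\cdot v)^2}{|\zeta|}.
\]
Writing $s=|\zeta|$ and $c^2=(\zeta\cdot v)^2/(|\zeta|^2|v|^2)\in[0,1]$, the right-hand side equals $|v|^2\bigl(\sigma(s)+s\,\sigma'(s)\,c^2\bigr)$, which is \emph{affine} in $c^2$. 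Hence its minimum over $c^2\in[0,1]$ is attained at an endpoint: at $c^2=0$ it is $\sigma(s)\ge\underline\sigma$, and at $c^2=1$ it is $\sigma(s)+s\,\sigma'(s)>\underline\sigma$ by the second inequality in \ref{H5}. This affine/endpoint observation is the cleanest way to avoid a case split on the sign of $\sigma'$ and it delivers exactly the constant $\underline\sigma$; thus $v^{T}DF(\zeta)v\ge\underline\sigma|v|^2$ for every $\zeta\neq0$ and every $v$.

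The main technical point, and the step I expect to require care, is justifying the fundamental theorem of calculus in the integral identity when the segment $\zeta(t)$ passes through the origin, where $F$ is not differentiable. I would handle this by noting that $t\mapsto F(\zeta(t))\cdot(\xi-\eta)$ is continuous on $[0,1]$ and $C^1$ off the at most one value $t_0$ with $\zeta(t_0)=0$; since $\sigma$ and $\sigma'$ are bounded on the compact range of $|\zeta(t)|$ and $\bigl|\sigma'(|\zeta|)(\zeta\cdot v)^2/|\zeta|\bigr|\le|\sigma'(|\zeta|)|\,|\zeta|\,|v|^2\to0$ as $\zeta\to0$, the derivative stays bounded, so the map is Lipschitz, hence absolutely continuous, and the identity holds. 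Combining this with the pointwise bound of the previous step yields $(\sigma(|\xi|)\xi-\sigma(|\eta|)\eta)\cdot(\xi-\eta)\ge\underline\sigma|\xi-\eta|^2$, which is \eqref{eq:equiv-monotonicity}; applying the argument to $\sigma^e$ and $\sigma^i$ separately gives the monotonicity asserted in \eqref{equiv-monotonicity}.
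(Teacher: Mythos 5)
Your proof is correct and takes essentially the same route as the paper: parametrize the segment $\zeta(t)=\eta+t(\xi-\eta)$, differentiate $\sigma(|\zeta(t)|)\zeta(t)$ along it, and bound the quadratic form $v^{T}DF(\zeta)v$ from below by splitting into components parallel and orthogonal to $\zeta$ --- your ``affine in $c^2$, minimum at an endpoint'' observation is exactly the paper's decomposition into $\bigl(\mathrm{proj}_{\zeta/|\zeta|}\,v\bigr)^2\bigl(|\zeta|\sigma'(|\zeta|)+\sigma(|\zeta|)\bigr)$ plus the orthogonal remainder, each controlled by \ref{H5}. The only substantive difference is your explicit justification of the integral identity when the segment passes through the origin (boundedness of the derivative, hence Lipschitz continuity and absolute continuity), a degenerate case the paper's mean-value argument passes over silently; this is a refinement of the same method rather than a different one.
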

\begin{proof}
Introduce $\zeta(t)=\eta + t(\xi-\eta)$ for $0\le t\le 1$ and consider $F(t)= \sigma(|\zeta(t)|)\zeta(t)$. Since $F(0)=\eta, F(1)=\xi$, \eqref{eq:equiv-monotonicity} is equivalent to the inequality
\begin{align*}
(F(1)-F(0))\cdot (\zeta(1)-\zeta(0))=F'(\zeta(t_0))\cdot(\xi-\eta)>0.
\end{align*}
Differentiating $F$ in $t$, we obtain
\begin{align*}
F'(\zeta(t))\cdot \zeta'(t)
&= \sigma'(|\zeta(t)|) \frac{\zeta'(t) \cdot \zeta(t)}{|\zeta(t)|} \zeta(t)\cdot \zeta'(t)
+ \sigma(|\zeta(t)|) |\zeta'(t)|^2\\
&= |\zeta(t)| \sigma'(|\zeta(t)|) \big(\mbox{proj}_{\zeta/|\zeta|} \zeta'(t)\big)^2 + \sigma(|\zeta(t)|)|\zeta'(t)|^2\\
&=\big(\mbox{proj}_{\zeta/|\zeta|} \zeta'(t)\big)^2 \big( |\zeta(t)| \sigma'(|\zeta(t)|) + \sigma(|\zeta(t)|)\big)\\
&+ \sigma(|\zeta(t)|) \big( |\zeta'(t)|^2- \big(\mbox{proj}_{\zeta/|\zeta|} \zeta'(t)\big)^2\big)
\end{align*}
where $\mbox{proj}_{\zeta/|\zeta|} \zeta'(t) = \frac{\zeta'(t) \cdot \zeta(t)}{|\zeta(t)|}$ is the projection of $\zeta'(t)$ on the unit vector $\zeta(t)/|\zeta(t)|$.
By assumption \ref{H5}, we can estimate the last expression from below, using $\zeta'(t)=\xi-\eta$:
\begin{align*}
F'(\zeta(t))\cdot \zeta'(t)\ge \underline{\sigma} \, |\zeta'(t)|^2 = \underline{\sigma} |\xi-\eta|^2.
\end{align*}
\end{proof}


{
\section{Nondimensionalization of the problem} \label{sec:dimension-analysis}
The asymptotic analysis of the microscopic problem is done under the assumption that a typical diameter of individual axons in the fascicle is much smaller than its length. In order to motivate the choice of the scaling parameter $\ve$ in the nonlinear dynamics on the membrane, one needs to perform the nondimensionalization of the problem. In the current section, we provide an example of how such an analysis can be done for specific applications of electrical stimulation. For the dimension analysis of the ionic current, we refer to \cite{rioux2013predictive}, and for the dimension analysis in the homogenization context to \cite{neu1993homogenization} and \cite{collin2018mathematical}.

Let $L_0$ be a characteristic length of a nerve, $T_0$ a characteristic time of signal propagation, $\Sigma_0$ a characteristic conductivity, and $U_0=V_{max}-V_{min}$ with $V_{min}$ and $V_{max}$ being characteristic upper and lower bounds for the membrane potential. We also introduce $G_0$, a characteristic value of the variable $g$. We set
\begin{align*}
    &\tilde{x}=\frac{x}{L_0}, \quad \tilde{t}=\frac{t}{T_0},
    \quad u^\alpha(x,t)= U_0\, \widetilde{u}^\alpha(\tilde{t}, \tilde{x}),\quad
    \sigma^\alpha(|\nabla u^\alpha|) = \Sigma_0\widetilde{\sigma}^\alpha(|\nabla_{\tilde x} \widetilde{u}^\alpha|), \, \alpha=i, e.
\end{align*}
We set also $I_{ion}(v, g) = I_0 \widetilde{I}_{ion}(\widetilde{v}, \widetilde{g})$ and $g= G_0 \tilde{g}$.
The cell membrane is modeled as a combined resistor and a capacitor, which results in the total current being decomposed into two parts 
\begin{align*}
c_m\partial_t v + I_{\rm ion}(v, g)
= - \sigma^i(|\nabla u^i|) \nabla u^{i}\cdot \nu.
\end{align*}
Nondimensionalizing the last equations yields
\begin{align*}
\frac{L_0}{\Sigma_0 U_0}\big(\frac{c_m U_0}{T_0}\partial_{\tilde{t}} \tilde{v} + I_0 \tilde{I}_{\rm ion}(\tilde{v}, \tilde{g})\big)= - \tilde{\sigma}^i(|\nabla_{\tilde{x}} \tilde{u}^i|) \nabla_{\tilde{x}} \tilde{u}^{i} \cdot \nu.
\end{align*}
We adopt the parameter values used in \cite{mandonnet2011role} for a numerical study of an axonal bipolar electrical stimulation (see also \cite{collin2018mathematical} for the typical parameter values used in cardiac electrophysiology). A typical diameter of an axon is taken to be $2\,\mu$m and a typical sample size $L_0=1$ cm. We introduce a small parameter $\ve$ as the ratio between a typical axon diameter and $L_0$, which gives $\ve$ of the order of $10^{-4}$. Taking $c_m = 0.6\, \mu$F/m, $U_0$ of order $10^{-1}$ V, $I_0$ of order $10^{-3}$ A/m$^2$, and $T_0=500\, \mu$s, we see that $c_m U_0/T_0$ and $I_0$ are of the same order of magnitude. The dimensionless coefficient $L_0 I_0/(\Sigma_0 L_0)$ is then of the same order as $\ve$, that is $10^{-4}$, which motivates the choice of the scaling parameter in \eqref{eq:orig-prob}. Note that since the FitzHugh-Nagumo model is phenomenological, the ODE for the variable $g$ is written in nondimensional form. 
}

\bibliographystyle{siamplain}
\bibliography{refs-random-axons}

\end{document}